\newtheorem{theorem}{Theorem}
\theoremstyle{plain}
\newtheorem{definition}[theorem]{Definition}
\newtheorem{lemma}[theorem]{Lemma}
\newtheorem{proposition}[theorem]{Proposition}
\newtheorem{remark}[theorem]{Remark}
\numberwithin{equation}{section}
\begin{document}
\title[Two weight inequality]{A two weight inequality for Calder\'{o}%
n--Zygmund operators on spaces of homogeneous type with applications}
\author[X. T. Duong]{Xuan Thinh Duong}
\address{Xuan Thinh Duong, Department of Mathematics\\
Macquarie University\\
NSW, 2109, Australia.}
\email{xuan.duong@mq.edu.au}
\thanks{X. Duong's research supported by ARC DP 190100970.}
\author[J. Li]{Ji Li}
\address{Ji Li, Department of Mathematics\\
Macquarie University\\
NSW, 2109, Australia.}
\email{ji.li@mq.edu.au}
\thanks{J. Li's research supported by ARC DP 170101060.}
\author[E. Sawyer]{Eric T. Sawyer}
\address{Eric T. Sawyer, Department of Mathematics, McMaster University, 
Hamilton, Ontario, Canada.}
\email{sawyer@mcmaster.ca }
\thanks{E. T. Sawyer's research supported by NSERC}
\author[M. N. Vempati]{Manasa N. Vempati}
\address{Manasa N. Vempati, Department of Mathematics\\
Washington University -- St. Louis\\
One Brookings Drive\\
St. Louis, MO USA 63130-4899}
\email{m.vempati@wustl.edu}
\author[B. D. Wick]{Brett D. Wick}
\address{Brett D. Wick, Department of Mathematics\\
Washington University -- St. Louis\\
One Brookings Drive\\
St. Louis, MO USA 63130-4899}
\email{wick@math.wustl.edu}
\thanks{B. D. Wick's research supported in part by NSF grant DMS-1800057 as well as ARC DP190100970.}
\author[D. Yang]{Dongyong Yang}
\address{Dongyong Yang, Department of Mathematics\\
Xiamen University\\
Xiamen 361005,  China.}
\email{dyyang@xmu.edu.cn}
\thanks{D. Yang's research supported by NNSF of China \#11971402 and \#11871254.}
\subjclass[2010]{Primary: 42B20, 43A85}
\keywords{two weight inequality, testing conditions, space of
homogeneous type, Calder\'on--Zygmund Operator, Haar Basis}
\date{\today}

\setcounter{tocdepth}{1}
\begin{abstract}
Let $(X,d,\mu )$ be a space of homogeneous type in the sense of Coifman and
Weiss, i.e. $d$ is a quasi metric on $X$ and $\mu $ is a positive measure
satisfying the doubling condition. Suppose that $u$ and $v$ are two
locally finite positive Borel measures on $(X,d,\mu )$. Subject to the pair
of weights satisfying a side condition, we characterize the boundedness of a
Calder\'{o}n--Zygmund operator $T$ from $L^{2}(u)$ to $L^{2}(v)$ in terms of
the $A_{2}$ condition and two testing conditions. For every cube $B\subset X$
, we have the following testing conditions, with $\mathbf{1}_{B}$ taken as
the indicator of $B$ 
\begin{equation*}
\Vert T(u\mathbf{1}_{B})\Vert _{L^{2}(B, v)}\leq \mathcal{T}\Vert 1_{B}\Vert
_{L^{2}(u)},
\end{equation*}
\begin{equation*}
\Vert T^{\ast }(v\mathbf{1}_{B})\Vert _{L^{2}(B, u)}\leq \mathcal{T}\Vert
1_{B}\Vert _{L^{2}(v)}.
\end{equation*}
The proof uses stopping cubes and corona decompositions originating in
work of Nazarov, Treil and Volberg, along with the pivotal side condition.
\end{abstract}

\maketitle
\tableofcontents


\section{Introduction and Statement of Main Results}
\label{s1}

The two weight conjecture for Calder\'on--Zygmund operators $T$ was first raised by Nazarov, Treil and Volberg on finding the necessary and sufficient conditions on the two weights $u$ and $v$ so that $T$ is bounded from $L^2(u)$ to $L^2(v)$. 
The third author, in \cite{s88-0}  first introduced the testing conditions (which are referred to as the Sawyer-type testing conditions) into the two weight setting on the maximal function, and later \cite{s88} on the fractional and Poisson integral operators; serving as motivation for the investigation by Nazarov, Treil and Volberg, see for example \cite{NTV}.

This conjecture in the special case that the pair of weights $u$ and $v$ do not share a common point mass was completely solved only recently when $T$ is the Hilbert transform on $\mathbb R$. This was supplied in two papers,  Lacey--Sawyer--Shen--Uriarte-Tuero \cite{LaSaShUr}, and Lacey \cite{L} that built on pioneering work of Nazarov, Treil, and Volberg \cite{NTV}.
The central question is providing a real-variable characterisation of the inequality
\begin{align}\label{two weight H}
\sup_{0<\alpha<\beta<\infty} \| H_{\alpha,\beta}(f\, v)\|_{L^2(w)}\leq \mathcal N \|f\|_{L^2(u)},
\end{align}
where $\mathcal N$ is the best constant such that the above inequality holds, $H_{\alpha,\beta}(f\, v)(x)$ is the standard truncation of the usual Hilbert transform, and $u, v$ are non-negative Borel locally finite measures on $\mathbb R$. The full solution is as follows.

\noindent {\bf Theorem A.}\ \ {\it Suppose that for all $x\in \mathbb R$, $u(\{x\})\cdot v(\{x\})=0$ for the pair of weights $u$ and $v$. Define two positive constants $\mathcal A_2$ and $\mathcal T$ as the best constants in the inequalities below, uniform over intervals $I$:
\begin{align}
&P(u,I)\cdot P(v,I) \leq \mathcal A_2;\label{Poisson}\\
&\int_I H(1_I u)^2 dv\leq \mathcal T^2 u(I),\qquad \int_I H(1_I v)^2 du\leq \mathcal T^2 v(I).\label{testing}
\end{align}
Then \eqref{two weight H} holds if and only if both \eqref{Poisson} and \eqref{testing} hold,  moreover, $ \mathcal N \approx \mathcal A_2^{1/2} + \mathcal T $.
}

In the theorem above, the term $P(u,I)$ is the Poisson integral with respect to the measure $u$ at the scaling level $|I|$ and centred at $x_I$, that is, 
$$ P(u,I) :=\int_{\mathbb R} { |I| \over (|I| + {\rm dist}(x,I))^2 } du(x). $$
The restriction regarding common point masses was removed by Hyt\"onen in \cite{Hyt2}.


The aim of this paper is to provide sufficient conditions for the two-weight
inequality for general Calder\'{o}n--Zygmund operators on spaces of homogeneous
type. Since we are working in a very general setting, the best we can hope
for at the moment is to provide a collection of sufficient conditions on the
weights that guarantee two weight estimates for Calder\'{o}n--Zygmund
operators. Our main approach is a suitable version of stopping cubes and corona decompositions originating in
work of Nazarov, Treil and Volberg \cite{NTV}, along with the pivotal\ side condition. 

Spaces of homogeneous type were introduced by Coifman and Weiss\footnote{%
As Yves Meyer remarked in his preface to \cite{DH}, \textquotedblleft One is
amazed by the dramatic changes that occurred in analysis during the
twentieth century. In the 1930s complex methods and Fourier series played a
seminal role. After many improvements, mostly achieved by the Calder\'on--Zygmund school, the action takes place today on
spaces of homogeneous type. No group structure is available, the Fourier
transform is missing, but a version of harmonic analysis is still present.
Indeed the geometry is conducting the analysis.\textquotedblright } in the
early 1970s, in \cite{CW1}, see also \cite{CW}. %
%
We say that $(X,d,\mu )$ is a {space of homogeneous type} in the sense of
Coifman and Weiss if $d$ is a quasi-metric on~$X$ and $\mu $ is a nonzero
measure satisfying the doubling condition. A \emph{quasi-metric}~$d$ on a
set~$X$ is a function $d:X\times X\longrightarrow \lbrack 0,\infty )$
satisfying

\begin{enumerate}
\item[(i)] $d(x,y) = d(y,x) \geq 0$ for all $x$, $y\in X$;

\item[(ii)] $d(x,y) = 0$ if and only if $x = y$; and

\item[(iii)] the \emph{quasi-triangle inequality}: there is a constant $%
A_{0}\in \lbrack 1,\infty )$ such that for all $x$, $y$, $z\in X$, \vspace{%
0.25cm} 
\begin{equation*}
d(x,y)\leq A_{0}[d(x,z)+d(z,y)].
\end{equation*}
\end{enumerate}

We say that a nonzero measure $\mu$ satisfies the \emph{doubling condition}
if there is a constant $C_{\mu}$ such that for all $x\in X$ and $r > 0$,
\begin{equation}
\mu (B(x,2r))\leq C_{\mu }\mu (B(x,r))<\infty ,  \label{doubling condition}
\end{equation}%
where $B(x,r)$ is the quasi-metric ball defined by $B(x,r):=\{y\in
X:d(x,y)<r\}$ for $x\in X$ and $r>0$. \vspace{0.25cm}

\noindent Recall that the doubling condition (\ref{doubling condition})
implies that there exists a positive constant $n$ (the \emph{upper dimension}
of~$\mu $) 
such that for all $x\in X$, $m\geq 1$ and $r>0$, 
\begin{equation*}
\mu (B(x,mr))\leq C_{\mu }m^{n}\mu (B(x,r)).
\end{equation*}%
Throughout this paper we assume that $\mu (X)=\infty $ and that $\mu
(\{x_{0}\})=0$ for every $x_{0}\in X$.

We now recall the definition of Calder\'{o}n--Zygmund operators on
spaces of homogeneous type.

\begin{definition}
\label{def 1} We say that $T$ is a Calder\'{o}n--Zygmund operator on $%
(X,d,\mu )$ if $T$ is bounded on $L^{2}(X)$ and has an associated kernel $%
\mathfrak K(x,y)$ such that $T(f)(x)=\int_{X}\mathfrak K(x,y)f(y)d\mu (y)$ for any $x\not\in 
\mathrm{supp}\,f$, and $\mathfrak  K(x,y)$ satisfies the following estimates: for all $%
x\not=y$, 
\begin{equation}
|\mathfrak K(x,y)|\leq {\frac{{C}}{{V(x,y)}}},  \label{size of C-Z-S-I-O}
\end{equation}%
and for $d(x,x^{\prime })\leq (2A_{0})^{-1}d(x,y)$, 
\begin{equation}
|\mathfrak K(x,y)-\mathfrak K(x^{\prime },y)|+|\mathfrak K(y,x)-\mathfrak K(y,x^{\prime })|\leq {\frac{C}{V(x,y)}}%
\omega \left( {\frac{d(x,x^{\prime })}{d(x,y)}}\right) ,
\label{smooth of C-Z-S-I-O}
\end{equation}%
where $V(x,y):=\mu (B(x,d(x,y)))$, $\omega :[0,1]\rightarrow \lbrack 0,\infty
)$ is continuous, increasing, subadditive, and $\omega (0)=0$. 
\end{definition}
Note that by the
doubling condition we have that $V(x,y)\approx V(y,x)$.  In the theorem below we have taken $\omega (t)=t^{\kappa }$ for some $\kappa\in(0,1)$ for the kernel
estimate \eqref{smooth of C-Z-S-I-O} and we refer to $\kappa$ as the smoothness parameter for the kernel $\mathfrak{K}(x,y)$. 

The main result of this paper provides sufficient conditions on a pair of weights $u$ and $v$ so that the following two-weight norm inequality
\begin{align}  \label{N}
& \|T(f\cdot u)\|_{L^2(v)}\leq \mathcal{N }\|f\|_{L^2(u)}
\end{align}
holds for a Calder\'{o}n--Zygmund operator $T$ on $(X,d,\mu )$, where $\mathcal N$ is the best constant (understood as the operator norm).

We also define $l(Q)$ (in Section 3.1)
 to be the side-length of a dyadic cube $Q$, and for all $Q=Q_{k}^{\alpha }\in \mathcal{D}_{k}$ where $%
\left\{ \mathcal{D}_{k}\right\} _{k\in \mathbb{Z}}$ is a system of dyadic
cubes with the parameter $\delta $ as given in Definition \ref{def dyadic
system} below. In particular we have $l(Q)= \delta ^{k}$. 

We say
that a set $Q$ is a \emph{cube}, or more precisely a $\left(
c_{1},C_{1},\delta \right) $-cube, if there is $z\in X$ such that%
\begin{equation*}
B\left( z,c_{1}\delta ^{k}\right) \subset Q_{\alpha }^{k}\left( \omega
\right) \subset B\left( z,C_{1}\delta ^{k}\right) ,
\end{equation*}%
where $c_{1}$, $C_{1}$ and $\delta $ are positive constants. The cubes
appearing in the main theorem below are those with $c_{1}$, $C_{1}$ and $%
\delta $ as in (3) of Theorem \ref{theorem dyadic} below.  Also for all cubes $Q$ and $x\in X\setminus Q$ we define $\func{dist}%
(x,Q):=\inf\limits_{q\in Q}\{d(q,x):q\in Q\}$ where $d$ is the quasi-metric
on $X$.  Recall that a measure $u$ is locally finite if for any point in $X$ there exists a neighborhood $B$ about that point so that $u(B)<\infty$.

We need two quantities that control certain quantities in the proof that control certain information uniformly over cubes $Q\subset X$.  The first is a version of an $\mathcal{A}_2$ condition.  Suppose the pair of weights $u,v$ satisfies the following $\mathcal{A}%
_{2}$ condition for all cubes $Q$: 
\begin{equation}\label{thm2 testing 1}
\left( \frac{u(Q)}{l(Q)^{n}}K(Q,v)\right) ^{\frac{1}{2}}\lesssim \mathcal{A}%
_{2}
\end{equation}%
with $K(Q,v):=\int_{X}\left(\frac{l(Q)}{l(Q)+\operatorname{dist}(y,Q)}\right)^{\kappa }\frac{1}{\mu
(B(x_{Q},l(Q)+\operatorname{dist}(y,Q)))}dv(y)$, as well as the dual condition $$\left( 
\frac{v(Q)}{l(Q)^{n}}K(Q,u)\right) ^{\frac{1}{2}}\lesssim \mathcal{A}_{2},$$
where $\mathcal A_2$ is the best constant such that the above inequalities hold.
Recall here that $\kappa$ is the smoothness parameter associated to the Calder\'on--Zygmund kernel in Definition \ref{def 1}.

The second is the pivotal condition: 
\begin{equation}\label{thm2 pivotal}
\sup\limits_{Q=\cup _{i\geq 1}S_{i}}\sum_{i\geq 1}\Phi (S_{i},1_Qu)\leq 
\mathcal{V}^{2}u(Q),
\end{equation}%
where  $\Phi (Q,1_Eu):= v(Q)K(Q,1_{E}u)^{2}$, as well as the dual version, in which $u$ and $v$ are interchanged.
Here $\mathcal V$ is the best constant, and the supremum is over all $r $-good subpartitions $%
\{S_{i}\}_{i\geq 1}$ of $Q$ where $r$ is defined in Definition \ref{d:newdef}.  An $r$-good subpartition consists of $Q$-dyadic subcubes $\{S_i\}$ of $Q$ such that $S_i$ is $r$-good in any  dyadic grid containing $Q$.

With these preliminaries, our main result is the following:
\begin{theorem}
\label{main theorem} Let $T$ be a Calder\'{o}n--Zygmund operator with smoothness parameter $\kappa$.  Let $u$ and $v$ be two locally finite, positive Borel measures on $X$. Suppose that $u(\{x\})\cdot v(\{x\})=0$ for $x\in X$ and that they satisfy the two weight condition with constant $\mathcal{A}_2$ and the pivotal condition with constant $\mathcal{V}$.  
Then $T:L^2(u)\to L^2(v)$ is bounded if and only if the following testing conditions hold:  for every cube $Q\subset X$, we have the
following testing conditions, with $1_{Q}$ taken as the indicator
of $Q$ 
\begin{equation}
\Vert T(u1_{Q})\Vert _{L^{2}(Q,v)}\leq \mathcal{T}\Vert 1_{Q}\Vert _{L^{2}(u)},
\end{equation}%
\begin{equation}
\Vert T^{\ast }(v1_{Q})\Vert _{L^{2}(Q,u)}\leq \mathcal{T}\Vert
1_{Q}\Vert _{L^{2}(v)}.
\end{equation}
Moreover, we have that $\mathcal{N}\lesssim \mathcal{A}_{2}+\mathcal{T}+\mathcal{V}$.
\end{theorem}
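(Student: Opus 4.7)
The necessity direction is immediate: taking $f=1_Q$ in \eqref{N} gives $\|T(u1_Q)\|_{L^2(Q,v)}\le\|T(u1_Q)\|_{L^2(v)}\le\mathcal{N}\|1_Q\|_{L^2(u)}$, so $\mathcal{T}\le\mathcal{N}$, and the dual testing condition follows from the same estimate applied to the formal adjoint $g\mapsto T^\ast(gv)$. The entire content of the theorem therefore lies in the sufficiency bound $\mathcal{N}\lesssim\mathcal{A}_2+\mathcal{T}+\mathcal{V}$, and the plan is to adapt the Nazarov--Treil--Volberg corona/stopping-time machinery to the quasi-metric setting of $(X,d,\mu)$.

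The first step is to fix a Hyt\"onen--Kairema system of dyadic cubes $\{\mathcal{D}_k\}$ on $X$ and, by randomising the choice of grid, restrict attention to a subclass of \emph{good} cubes whose boundaries lie far from the boundaries of much larger cubes. In parallel, I would construct weighted Haar-type bases $\{h_Q^u\}_{Q\in\mathcal{D}}$ in $L^2(u)$ and $\{h_R^v\}_{R\in\mathcal{D}}$ in $L^2(v)$ and expand
$$\langle T(fu),gv\rangle=\sum_{Q,R\in\mathcal{D}}\langle f,h_Q^u\rangle_u\,\langle g,h_R^v\rangle_v\,\langle T(h_Q^u u),h_R^v\rangle_v.$$
A standard probabilistic reduction shows that it suffices to estimate this sum over good pairs $(Q,R)$; the bad-cube contribution is reabsorbed into a small multiple of $\mathcal{N}$.

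The sum over good pairs is split according to the relative position of $Q$ and $R$. The \emph{separated} and \emph{comparable-scale overlapping} pieces are handled by combining the kernel size and smoothness estimates \eqref{size of C-Z-S-I-O}--\eqref{smooth of C-Z-S-I-O} with the $\mathcal{A}_2$ quantity and its Poisson-type ingredient $K(Q,v)$, the goodness of the inner cube providing the extra decay that controls interactions between neighbouring boundaries. The \emph{deeply nested} regime $R\subsetneq Q$ requires a corona/stopping-time construction: starting from a top cube, I declare a descendant $S$ a stopping cube once the $u$-average of $|f|$ on $S$ exceeds a large multiple of the average on the nearest stopping ancestor, or when a comparable $\mathcal{A}_2$-type ratio jumps. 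On each corona the averages stabilise, and the nested sum splits into a \emph{paraproduct} piece, a \emph{commutator/neighbour} piece, and a \emph{stopping-energy} piece. The paraproduct reduces to $\mathcal{T}$ through the forward testing condition applied to the indicator of the corona top; the commutator piece is controlled by $\mathcal{A}_2$ using the $\kappa$-H\"older regularity; and the stopping piece is controlled by the pivotal condition, since on any $r$-good partition $\{S_i\}$ of $Q$ the Carleson-type inequality $\sum_i\Phi(S_i,1_Q u)=\sum_i v(S_i)K(S_i,1_Q u)^2\le\mathcal{V}^2 u(Q)$ is exactly what telescopes along the stopping tree. The regime $Q\subsetneq R$ is handled symmetrically with $u,v$ and $T,T^\ast$ interchanged.

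I expect the main obstacle to be the \emph{paraproduct-stopping interaction}: the near-diagonal portion of the nested sum where neither kernel smoothness (which improves only by the factor $\kappa$) nor the testing constant alone suffices, and where one must simultaneously exploit the goodness of the inner cube, the stabilisation of averages inside a single corona, and the pivotal Carleson embedding, telescoping along the stopping tree so that the residual mass is absorbed by $\mathcal{V}^2u(Q_0)$ rather than propagating across coronas. A secondary, pervasive technicality is that Hyt\"onen--Kairema cubes are not Euclidean: every boundary, Whitney, and neighbour argument must be redone using the doubling of $\mu$, the quasi-triangle inequality for $d$, and only the sandwich $B(z,c_1\delta^k)\subset Q\subset B(z,C_1\delta^k)$ in place of rectangular geometry.
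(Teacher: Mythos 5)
Your overall architecture matches the paper's: random dyadic grids à la Hyt\"onen--Kairema, weighted Haar expansions in $L^2(u)$ and $L^2(v)$, reduction to good cubes, a split by relative position of $Q$ and $S$ into separated, comparable-scale, and deeply nested regimes, and then a corona decomposition of the nested block into neighbour, paraproduct, and stopping pieces. The separated block is indeed handled by the $\mathcal{A}_2$ condition (via the auxiliary estimate Lemma \ref{l:4.3} and the Poisson-type improvement Lemma \ref{l:4.4}), the comparable-scale close block by surgery (not by $\mathcal{A}_2$ alone, but by $\mathcal{T}+\sqrt{\mathcal{A}_2}+\tau^{\eta/2}\mathcal{N}$ with a small absorbable multiple of $\mathcal{N}$), and the corona construction is the engine of the nested regime. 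So the skeleton is right.

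The substantive discrepancy is your stopping criterion. You propose declaring $S$ a stopping cube ``once the $u$-average of $|f|$ on $S$ exceeds a large multiple of the average on the nearest stopping ancestor, or when a comparable $\mathcal{A}_2$-type ratio jumps.'' That is the Calder\'on--Zygmund-type (or Lacey--Sawyer--Shen--Uriarte-Tuero-type) stopping rule, and it is \emph{not} what this paper does, nor does it sit well with the pivotal hypothesis as formulated. The paper defines stopping cubes purely through the pivotal functional: $\mathcal{S}(Q_o)$ consists of the maximal strict $\mathcal{D}^u$-subcubes $S\subset Q_o$ with $\Psi(S,1_{Q_o}u)\geq 4\mathcal{V}^2 u(S)$, where $\Psi(S,1_E u)=\sup_{S=\cup_i Q_i}\sum_i v(Q_i)K(Q_i,1_E u)^2$. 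This construction is independent of $f$ and $g$. It does two things that your rule does not: (i) inside a corona the negation of \eqref{e:stopcube} holds at every cube $Q_S$, which is exactly the bound one feeds into the stopping form estimate $|A_3^4|\lesssim\mathcal{V}\|f\|_{L^2(u)}\|g\|_{L^2(v)}$; and (ii) the Carleson packing property $\sum_{J'\in\mathcal{S}(S')}u(J')\leq\frac14 u(S')$ follows immediately from the pivotal condition applied to the stopping cubes themselves, which is what gives the Carleson embedding constants $\mathcal{T}+\mathcal{V}$ for the paraproduct terms. With an $f$-average-based rule, the negation of the pivotal stopping condition is simply unavailable inside coronas, and the stopping form and the off-diagonal Carleson estimates of Section 6 and Theorem \ref{thm off-dia} would have no reason to be controlled by $\mathcal{V}$. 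Your phrase ``is exactly what telescopes along the stopping tree'' presupposes the pivotal-based stopping; the pivotal hypothesis alone does not make an $f$-average corona interface with $\Phi$ in that way. Finally, the paraproduct block is not controlled by $\mathcal{T}$ alone: after the global-to-local reduction, the paper further splits $A_2^4$ into pieces controlled by $\mathcal{T}$, by $\mathcal{V}$, and by a geometric-decay Carleson estimate (Theorem \ref{t:estimate}), all of which again rely on the pivotal definition of the stopping cubes.
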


\begin{remark}
We would like to point out that we introduce a new version of a Poisson-type integral $K(Q,v)$ in the main Theorem that plays the role of the standard Poisson integral as in \eqref{Poisson} in Theorem A. The main reason for providing such a condition is that, for some Calder\'on--Zygmund operators in certain particular setting, the typical Poisson integral in that setting is not linked directly to the study of two weight inequality for the Calder\'on-Zygmund operator. We refer to Section \ref{sec:Bessel} for a concrete example in the Bessel setting introduced and studied by Muckenhoupt--Stein \cite{ms}.  

We also remark that the choice of $\kappa$ is flexible, but dictated by the smoothness of the Calder\'on-Zygmund kernel.  If one has a different kernel possessing a different smoothness, but satisfies the appropriate $\mathcal{A}_2$, testing and pivotal conditions, then one can have a version of Theorem \ref{main theorem}.  The choice of $\kappa$ can be dictated by the particular example at hand.
\end{remark}


It is immediate that the testing conditions are necessary and that $\mathcal{%
T}\lesssim \mathcal{N}$. The forward condition follows
by testing \eqref{N} on an indicator function of a cube and restricting the region of
integration. The dual condition follows by testing the dual inequality %
\eqref{N} (obtained by interchanging the roles of $u$ and $v$) on the
indicator of a cube and then again restricting the integration. In the
remainder of the paper we address how to show that these testing conditions
are sufficient to prove \eqref{N} under the additional $A_{2}$ and pivotal
hypothesis. In the course of the proof we will also demonstrate that $%
\mathcal{N}\lesssim \mathcal{A}_{2}+\mathcal{T}+\mathcal{%
V}$. Throughout the paper, we use the notation $X\lesssim Y$ to denote that
there is an absolute constant $C$ so that $X\leq CY$, where $C$ may change from one occurrence to another. If we write $X\approx
Y $, then we mean that $X\lesssim Y$ and $Y\lesssim X$. And, $:=$ means
equal by definition.

\subsection{Extension to Hilbert space valued operators}

Following \cite[Chapter II, Section 5]{Ste}, we consider two Hilbert spaces $%
\mathcal{H}_{1}$ and $\mathcal{H}_{2}$, and replace the scalar-valued
expressions in the definition of singular integral 
\begin{equation*}
Tf\left( x\right) =\int_{X}\mathfrak K\left( x,y\right) f\left( y\right) d\mu \left(
y\right) ,
\end{equation*}%
with the appropriate Hilbert space valued expressions, namely with $%
f:X\rightarrow \mathcal{H}_{1}$ and $K:X\times X\rightarrow B\left( \mathcal{%
H}_{1},\mathcal{H}_{2}\right) $, so that $Tf:X\rightarrow \mathcal{H}_{2}$.
Here $B\left( \mathcal{H}_{1},\mathcal{H}_{2}\right) $ is the Banach space
of bounded linear operators $L:\mathcal{H}_{1}\rightarrow \mathcal{H}_{2}$
equipped with the usual operator norm. We refer to such an operator $T$ as
an $H_{1}\rightarrow H_{2}$\emph{\ Calder\'{o}n--Zygmund operator} if its
kernel satisfies the usual size and smoothness conditions%
\begin{eqnarray}
&&\left\vert\mathfrak  K\left( x,y\right) \right\vert _{B\left( \mathcal{H}_{1},%
\mathcal{H}_{2}\right) }\leq C_{CZ} V\left( x,y\right)^{-1},  \label{size Hilbert} \\
&&\left\vert \mathfrak  K\left( x,y\right) -\mathfrak  K\left( x^{\prime },y\right)
\right\vert _{B\left( X\times X,B\left( \mathcal{H}_{1},\mathcal{H}%
_{2}\right) \right) }\leq C_{CZ}\left( \frac{d\left( x,x^{\prime }\right) }{%
d\left( x,y\right) }\right) ^{\kappa } V(x,y)^{-1},\ \ \ \ \ \frac{d\left( x,x^{\prime }\right) }{d\left( x,y\right) }%
\leq \frac{1}{2A_0},  \notag
\end{eqnarray}%
and if $T$ is bounded from unweighted $L_{\mathcal{H}_{1}}^{2}$ to
unweighted $L_{\mathcal{H}_{2}}^{2}$. In the Appendix, Section \ref{s:Appendix}, to this paper we
will fix two separable Hilbert spaces $\mathcal{H}_{1}$ and $\mathcal{H}_{2}$%
, and describe in detail the definition and interpretation of standard
fractional singular integrals, the weighted norm inequality, Poisson
integrals and Muckenhoupt conditions, Haar bases and pivotal conditions,
which are for the most part routine.

\begin{theorem}
\label{Hilbert}Let $\mathcal{H}_{1}$ and $\mathcal{H}_{2}$ be separable
Hilbert spaces. Let $T$ be a Calder\'{o}n--Zygmund operator taking $L_{%
\mathcal{H}_{1}}^{2}$ to $L_{\mathcal{H}_{2}}^{2}$.  Let $u$ and $v$ be two locally finite positive Borel measures on a space of
homogeneous type $\left( X,d,\mu \right) $. Suppose that $u(\{x\})\cdot
v(\{x\})=0$ for $x\in X$. Suppose the above $\mathcal{A}_{2}$ and pivotal
conditions hold.  Suppose the following
testing conditions hold: for every cube $Q\subset X$, we have the following
testing conditions, with $\mathbf{1}_{Q}$ taken as the indicator of $Q$: 
\begin{equation*}
\Vert T(\mathbf{e}_{1}\mathbf{1}_{Q} u)\Vert _{L_{\mathcal{H}_{2}}^{2}(v)}\leq 
\mathcal{T}\Vert \mathbf{1}_{Q}\Vert _{L^{2}(u)},\ \ \ \
\ \text{for all unit vectors }\mathbf{e}_{1}\text{ in }\mathcal{H}_{1}
\end{equation*}%
\begin{equation*}
\Vert T^{\ast }(\mathbf{e}_{2}\mathbf{1}_{Q} v)\Vert _{L_{\mathcal{H}%
_{1}}^{2}(u)}\leq \mathcal{T}\Vert \mathbf{1}_{Q}\Vert _{L^{2}(v)},\ \ \ \ \ \text{for all unit vectors }\mathbf{e}_{2}\text{
in }\mathcal{H}_{2}.
\end{equation*}%
Then there holds $\mathcal{N}\lesssim \mathcal{A}_{2}+\mathcal{T}+\mathcal{V}$.
\end{theorem}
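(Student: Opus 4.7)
The plan is to lift the scalar proof of Theorem \ref{main theorem} to the Hilbert-space-valued setting by replacing every scalar Haar coefficient with a Hilbert-space-valued one and interpreting all norms in the appropriate target. First, I would set up a Haar basis $\{h_Q^a\}$ on $L^2(u)$ adapted to the dyadic system of Theorem \ref{theorem dyadic}; then any $f\in L^2_{\mathcal{H}_1}(u)$ admits the orthogonal expansion $f=\sum_{Q,a}\widehat{f}(Q,a)\,h_Q^a$, where now each Haar coefficient $\widehat{f}(Q,a)=\langle f,h_Q^a\rangle_u$ is an element of $\mathcal{H}_1$ and Parseval reads $\|f\|_{L^2_{\mathcal{H}_1}(u)}^2=\sum_{Q,a}\|\widehat{f}(Q,a)\|_{\mathcal{H}_1}^2$. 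The stopping-time and corona constructions used in Theorem \ref{main theorem} depend only on the scalar measures $u,v$ (and scalar averages of the vector-valued function $\|f\|_{\mathcal{H}_1}$), so they carry over without change.

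Next, I would expand the pairing $\langle T(fu),gv\rangle_{\mathcal{H}_2}$ exactly as in the scalar argument, obtaining a global term, a paraproduct term, a stopping/Carleson term, and a remainder controlled by the pivotal condition. In each of these, the only place the operator $T$ acts is through quantities of the form $\langle T(\widehat{f}(Q,a)\,h_Q^a\,u),\widehat{g}(R,b)\,h_R^b\,v\rangle_{\mathcal{H}_2}$. By pulling out the (scalar) Haar functions and writing $\widehat{f}(Q,a)=\sum_k\alpha_k\,\mathbf{e}_k^{(1)}$, $\widehat{g}(R,b)=\sum_\ell\beta_\ell\,\mathbf{e}_\ell^{(2)}$ in orthonormal bases of $\mathcal{H}_1,\mathcal{H}_2$, the vector-valued testing hypothesis gives the bound $\|T(\mathbf{e}_k^{(1)}\mathbf{1}_Q u)\|_{L^2_{\mathcal{H}_2}(v)}\le\mathcal{T}\|\mathbf{1}_Q\|_{L^2(u)}$, and a Cauchy--Schwarz in the $(k,\ell)$ indices recovers exactly the inequality the scalar proof would have used. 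In this way every scalar inequality of the form $|\langle T(\varphi\, u),\psi\, v\rangle|\le C\|\varphi\|_{L^2(u)}\|\psi\|_{L^2(v)}$ lifts to its $\mathcal{H}_1\to\mathcal{H}_2$ analogue.

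Because the $\mathcal{A}_2$ and pivotal hypotheses are scalar conditions on $u,v$ involving only $\kappa$ and the kernel $K(Q,\cdot)$, the Carleson and stopping-form estimates of the scalar proof apply verbatim to the coefficient sequences $\{\|\widehat{f}(Q,a)\|_{\mathcal{H}_1}\}$ and $\{\|\widehat{g}(R,b)\|_{\mathcal{H}_2}\}$. Combined with the vector-valued Parseval identity this yields $\mathcal{N}\lesssim \mathcal{A}_2+\mathcal{T}+\mathcal{V}$ with the same constants as in the scalar case.

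The main obstacle will be the Hilbert-space version of the monotonicity lemma: in the scalar proof one replaces $T(\mathbf{1}_{Q^c}u)(x)$ by $T(\mathbf{1}_{Q^c}u)(x_Q)$ up to a Poisson-type tail coming from the smoothness of $\mathfrak{K}$, and then exploits a sign/telescoping argument. In the vector-valued setting the pointwise scalar difference becomes a difference of operators in $B(\mathcal{H}_1,\mathcal{H}_2)$ acting on a vector in $\mathcal{H}_1$, and the telescoping/sign argument must be replaced by an $\mathcal{H}_2$-norm estimate that uses the smoothness bound \eqref{size Hilbert} inside a Bochner integral. Provided one fixes the measurability and integrability issues addressed in the Appendix, the operator-norm bound $\|\mathfrak{K}(x,y)-\mathfrak{K}(x',y)\|_{B(\mathcal{H}_1,\mathcal{H}_2)}$ is scalar and enters the proof in precisely the same way as in the scalar setting, so all remaining estimates go through without modification and the quoted constant is obtained.
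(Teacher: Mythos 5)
Your proposal takes a genuinely different route from the paper, and it contains a step that fails. The paper does not re-run the scalar argument in the vector-valued setting: its proof of Theorem \ref{Hilbert} is a short reduction to Theorem \ref{main theorem}. For each pair of unit vectors $(\mathbf{e}_1,\mathbf{e}_2)\in\mathcal H_1^{\func{unit}}\times\mathcal H_2^{\func{unit}}$ it introduces the \emph{scalar} operator $T_{\mathbf{e}_1,\mathbf{e}_2}$ with kernel $\langle\mathfrak{K}(x,y)\mathbf{e}_1,\mathbf{e}_2\rangle_{\mathcal H_2}$, asserts $\Vert T\Vert_{L^2_{\mathcal H_1}(u)\to L^2_{\mathcal H_2}(v)}=\sup_{\mathbf{e}_1,\mathbf{e}_2}\Vert T_{\mathbf{e}_1,\mathbf{e}_2}\Vert_{L^2(u)\to L^2(v)}$ together with the corresponding correspondence of testing constants, and then quotes the scalar theorem for each $T_{\mathbf{e}_1,\mathbf{e}_2}$. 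Your plan of re-deriving all of the Haar/corona/paraproduct estimates with $\mathcal H_1$-valued coefficients is in the spirit of what the Appendix, Section \ref{s:Appendix}, sets up, but the mechanism you describe for lifting the scalar bounds does not work. You propose to write $\widehat f(Q,a)=\sum_k\alpha_k\mathbf{e}_k^{(1)}$, $\widehat g(R,b)=\sum_\ell\beta_\ell\mathbf{e}_\ell^{(2)}$ in orthonormal bases, apply the testing bound to each pair $(k,\ell)$, and ``Cauchy--Schwarz in the $(k,\ell)$ indices.'' Bounding the individual terms uniformly and summing their moduli produces $\bigl(\sum_k\vert\alpha_k\vert\bigr)\bigl(\sum_\ell\vert\beta_\ell\vert\bigr)$, which is not controlled by $\Vert\widehat f(Q,a)\Vert_{\mathcal H_1}\,\Vert\widehat g(R,b)\Vert_{\mathcal H_2}$ when $\dim\mathcal H_i=\infty$; this is an $\ell^1$ versus $\ell^2$ mismatch, and no Cauchy--Schwarz in $(k,\ell)$ repairs it. The correct move --- which the paper singles out in the Appendix under the heading \textbf{Caution} --- is not to expand $\widehat f(Q,a)$ in a basis at all, but to factor out its $\mathcal H_1$-norm and feed the single normalized direction into the vector-valued testing hypothesis: $T_\sigma\bigl(\mathbf 1_{Q'}E^\mu_{Q'}\bigtriangleup^\mu_{\mathcal H;Q}f\bigr)=\bigl\vert E^\mu_{Q'}\bigtriangleup^\mu_{\mathcal H;Q}f\bigr\vert_{\mathcal H}\,T_\sigma(\mathbf 1_{Q'}\mathbf e)$ with $\mathbf e$ the normalized coefficient. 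Without that substitution the paraproduct and Carleson estimates degrade by an uncontrolled factor.

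A secondary issue: the ``monotonicity lemma'' with a ``sign/telescoping argument'' that you treat as the main obstacle in your final paragraph is not part of this paper. The off-diagonal control here rests on the pivotal condition of Nazarov--Treil--Volberg and the operator-norm smoothness bound in Lemma \ref{l:4.3}; there is no kernel sign argument anywhere, and the telescoping in \eqref{e:telescope} concerns martingale averages, not kernel monotonicity. You appear to be importing the monotonicity/energy machinery from the Lacey--Sawyer--Shen--Uriarte-Tuero approach to the Hilbert transform, which is a different proof strategy than the one this paper uses.
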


To see how this theorem follows from the scalar-valued Theorem \ref{main
theorem}, consider the scalar operators $T_{\mathbf{e}_{1},\mathbf{e}_{2}}$
associated with $T$ for every pair of unit vectors $\left( \mathbf{e}_{1},%
\mathbf{e}_{2}\right) \in \mathcal{H}_{1}^{\func{unit}}\times \mathcal{H}%
_{2}^{\func{unit}}$ whose kernels $K_{\mathbf{e}_{1},\mathbf{e}_{2}}\left(
x,y\right) $ are given by 
\begin{equation*}
K_{\mathbf{e}_{1},\mathbf{e}_{2}}\left( x,y\right) :=\left\langle\mathfrak  K\left(
x,y\right) \mathbf{e}_{1},\mathbf{e}_{2}\right\rangle _{\mathcal{H}_{2}}.
\end{equation*}%
It is easy to see that 
\begin{equation*}
\left\Vert T\right\Vert _{L_{\mathcal{H}_{1}}^{2}(u)\rightarrow L_{\mathcal{H%
}_{2}}^{2}(v)}=\sup_{\left( \mathbf{e}_{1},\mathbf{e}_{2}\right) \in 
\mathcal{H}_{1}^{\func{unit}}\times \mathcal{H}_{2}^{\func{unit}}}\left\Vert
T_{\mathbf{e}_{1},\mathbf{e}_{2}}\right\Vert _{L^{2}(u)\rightarrow
L^{2}(v)},
\end{equation*}%
with a similar equality for the testing conditions. Theorem \ref{Hilbert}
now follows immediately from Theorem \ref{main theorem}.

\section{Applications of the Main Theorem}

In this section we provide several typical examples of Calder\'on--Zygmund operators arising from different backgrounds (including several complex variables, stratified Lie groups, and differential equations), which are not the standard Euclidean setting, but fall into the scope of spaces of homogeneous type.

\subsection{Bessel Riesz transforms}
\label{sec:Bessel}

As an application, we have a two-weight inequality for the Bessel Riesz
transform, which is a Calder\'on--Zygmund operator \cite{ms}. In 1965, Muckenhoupt and Stein in \cite{ms} introduced a notion
of conjugacy associated with this Bessel operator ${\Delta _{\lambda }}$, $\lambda>0$, 
which is defined by 
\begin{equation*}
{\Delta _{\lambda }}f(x):=-\frac{d^{2}}{dx^{2}}f(x)-\frac{2\lambda }{x}\frac{%
d}{dx}f(x),\quad x>0.
\end{equation*}%
They developed a theory in the setting of ${\Delta _{\lambda }}$ which
parallels the classical one associated to standard Laplacian. For $p\in \lbrack
1,\infty )$, ${\mathbb{R}}_{+}:=(0,\infty )$ and ${dm_{\lambda }}%
(x):=x^{2\lambda }\,dx$ results on ${L^{p}({{\mathbb{{\mathbb{R}}}}_{+}}%
,\,dm_{\lambda })}$-boundedness of conjugate functions and fractional
integrals associated with ${\Delta _{\lambda }}$ were obtained. Since then,
many problems based on the Bessel context were studied; see, for example,
\cite{ak,bcfr,bhnv,k78,v08}. In particular, the properties and $L^{p}$ boundedness $(1<p<\infty )$ of
Riesz transforms 
\begin{equation*}
R_{\Delta _{\lambda }}f:=\partial _{x}(\Delta _{\lambda })^{-\frac{1}{2}}f
\end{equation*}%
related to $\Delta _{\lambda }$ have been studied extensively (see for example \cite{ak,bcfr,bfbmt,ms,v08}).

We recall that there is a standard Poisson integral  in the Bessel setting.
Let $\left\{\mathsf{P}^{[\lambda]}_t\right\}_{t>0}$ be the Poisson semigroup $\left\{e^{-t\sqrt{\Delta_{\lambda}}}\right\}_{t>0}$ defined by
\begin{equation*}
\mathsf{P}^{[\lambda]}_tf(x):=\int_0^\infty \mathsf{P}^{[\lambda]}_t(x,y)f(y)y^{2\lambda}\,dy,
\end{equation*}
where
\begin{equation*}
\mathsf{P}^{[\lambda]}_t(x,y)=\int_0^\infty e^{-tz}(xz)^{-\lambda+\frac{1}{2}}J_{\lambda-\frac{1}{2}}(xz)(yz)^{-\lambda+\frac{1}{2}}J_{\lambda-\frac{1}{2}}(yz)z^{2\lambda}\,dz
\end{equation*}
and $J_{\nu}$ is the Bessel function of the first kind and of  order $\nu$.  Weinstein \cite{w48} established the following formula for $\mathsf{P}^{[\lambda]}_t(x,y)$: $t,\,x,\, y\in\mathbb{R}_+$,
\begin{equation}\label{poisson kernel}
\mathsf{P}^{[\lambda]}_t(x,y)={2\lambda t\over\pi} \int_0^{\pi} \frac{(\sin\theta)^{2\lambda-1}}{(x^2+y^2+t^2-2xy\cos\theta)^{\lambda+1}}\,d\theta.
\end{equation}
The two weight inequality for this Poisson operator was just established recently in \cite{LW}, that is,  for a measure $\mu$ on $\mathbb{R}^2_{+,+}:=(0,\infty)\times (0,\infty)$ and $\sigma$ on $\mathbb{R}_+$:
$$
\|\mathsf{P}^{[\lambda]}_\sigma(f)\|_{L^2(\mathbb{R}^2_{+,+};\mu)} \lesssim \|f\|_{L^2(\mathbb{R}_+;\sigma)},
$$
if and only if testing conditions hold for the Poisson operator and its adjoint. However, this two weight Poisson inequality does not relate directly to the two weight inequality for $R_{\Delta _{\lambda }}$. We have to link it to the Poisson type condition introduced in \eqref{thm2 testing 1}.

\subsection{Bergman Projection}

As another application we look at the Bergman projection. Let $\mathcal{D} :=
\{z\in \mathbb{C}^n: |z|<1\}$ be the unit ball in $\mathbb{C}^n$; $%
d\mu_{a}(\zeta):= (1-|\zeta|^2)^{a-1}d\mu(\zeta)$ where $a>0$ and $\mu$ is
Lebesgue measure on $\mathbb{C}^n$. Let us denote by $L^p(d\mu_a)$ the
Lebesgue space related to $\mu_a$, $1\leq p \leq \infty$. The Bergman
projection $T_{a}f$ for $f\in L^2(d\mu_a)$ is given by 
\begin{equation*}
T_{a}f(z) := \int_{\mathcal{D}}\frac{f(\zeta)}{(1-z\cdot\overline{\zeta})^{n+a}}%
d\mu(\zeta).
\end{equation*}
Here we have $z\cdot\overline{\zeta} := z_1\overline{\zeta}_{1}+\cdots+z_n\overline{\zeta}_{n}$
where $z:=(z_1,\ldots,z_n)$ and $\zeta :=
(\zeta_{1},\ldots,\zeta_{n})$. The operator $T_a$ extends
continuously on $L^p(d\mu_a)$ for $1<p<\infty$ and weakly continuously on $%
L^1(d\mu_a)$. If $\mathcal{D}$ is provided with the pseudo-distance $d$ then the
triple $(\mathcal{D},d,\mu_a)$ is a space of homogeneous type. Note that $%
K_a(z,\zeta) := \frac{1}{(1-z\cdot \overline{\zeta})^{n+a}}$ is the kernel associated to
the operator $T_a$. We can observe that $K_a(z,\zeta)$ satisfies the
following smoothness and size estimates: there are constants $\beta,c_B$
such that 
\begin{equation}
|K_a(z,\zeta)-K_a(z,\zeta^0)|\lesssim \frac{[d(\zeta,\zeta^0)]^{\beta} }{%
[d(z,\zeta^0)]^{n+a+\beta}}
\end{equation}
for $z,\zeta,\zeta^0$ such that $d(z,\zeta^0)>c_{B}d(\zeta,\zeta^0)$.  Here the pseudo-distance $d$ is defined by 
\begin{equation*}
d(z,\zeta):= ||z|-|\zeta||+\left|1-\frac{z\cdot\zeta}{|z||\zeta|}\right|.
\end{equation*}
Also the following size estimate holds: for $z,\zeta$ in $\mathcal{%
D}$  with $z\not=\zeta$, we have 
\begin{equation}
|K_a(z,\zeta)|\lesssim \frac{1}{d(z,\zeta)^{n+a}}.
\end{equation}

So $T_{a}$ is a singular integral operator on $(\mathcal{D},d,\mu _{a})$.
Using Theorem \ref{main theorem} we deduce a two weight inequality for the Bergman projection using $\mathcal{A}_{2}$, testing conditions and the pivotal condition associated with the kernel $K_{a}$.

\subsection{The Szeg\H{o} Projection on a Family of Unbounded Weakly
Pseudoconvex Domains}

Recall the family of weakly pseudoconvex domains $\left\{ \Omega
_{k}\right\} _{k=1}^{\infty }$ defined in Greiner and Stein \cite{GrSt}
by%
\begin{eqnarray*}
\Omega _{k} &:= &\left\{ \left( z_{1},z_{2}\right) \in \mathbb{C}^{2}:%
\func{Im}z_{2}>\left\vert z_{1}\right\vert ^{2k}\right\} , \\
\partial \Omega _{k} &:= &\left\{ \left( z_{1},z_{2}\right) \in \mathbb{C%
}^{2}:\func{Im}z_{2}=\left\vert z_{1}\right\vert ^{2k}\right\} ,
\end{eqnarray*}%
which are naturally parameterized by $z_{1}$ and $\func{Re}z_{2}$. We
consider the points $\zeta ,\omega ,\nu $ in $\partial \Omega _{k}$ given by%
\begin{eqnarray*}
\zeta &:=&\left( z_{1},\func{Re}z_{2}\right) := \left( z,t\right) ,\ \ \ z=z_{1}\in 
\mathbb{C}\text{ and }t\in \mathbb{R}\text{,} \\
\omega &:=&\left( w_{1},\func{Re}w_{2}\right) := \left( w,s\right) ,\ \ \ w=w_{1}\in 
\mathbb{C}\text{ and }s\in \mathbb{R}\text{,} \\
\nu &:=&\left( u_{1},\func{Re}u_{2}\right) := \left( u,r\right) ,\ \ \ u=u_{1}\in 
\mathbb{C}\text{ and }r\in \mathbb{R}\text{.}
\end{eqnarray*}%
The Szeg\H{o} projection $\mathcal{S}$ on $\Omega _{k}$ is the orthogonal
projection from $L^{2}\left( \partial \Omega _{k}\right) $ to the Hardy
space $H^{2}\left( \Omega _{k}\right) $ of holomorphic functions on $\Omega
_{k}$\ with $L^{2}$ boundary values. The Szeg\"{o} kernel $S\left( \zeta
,\omega \right) $ is the unique kernel satisfying%
\begin{equation*}
\mathcal{S}f\left( \zeta \right) :=\int_{\partial \Omega _{k}}S\left( \zeta
,\omega \right) f\left( \omega \right) dV\left( \omega \right) ,
\end{equation*}%
where $dV\left( \omega \right) :=dV\left( x,y,s\right) =dxdyds$ with $\omega
:=\left( w,s\right) =\left( x+iy,s\right) $ is Lebesgue measure on the
parameter space $\mathbb{R}^{3}$. Greiner and Stein \cite{GrSt} have
computed the Szeg\"{o} kernel with Lebesgue measure on the parameter space
via the formula%
\begin{eqnarray*}
S\left( \zeta ,\omega \right) &:=&\frac{1}{4\pi ^{2}}\left[ \left( \frac{i}{2}%
\left[ s-t\right] +\frac{\left\vert z_{1}\right\vert ^{2k}+\left\vert
w_{1}\right\vert ^{2k}}{2}+\frac{\mu +\eta }{2}\right) ^{\frac{1}{k}}-z_{1}%
\overline{w_{1}}\right] ^{-2} \\
&&\ \ \ \ \ \ \ \ \ \ \times \left( \frac{i}{2}\left[ s-t\right] +\frac{%
\left\vert z_{1}\right\vert ^{2k}+\left\vert w_{1}\right\vert ^{2k}}{2}+%
\frac{\mu +\eta }{2}\right) ^{\frac{1}{k}-1},
\end{eqnarray*}%
where $\mu :=\func{Im}z_{2}-\left\vert z_{1}\right\vert ^{2k}$ and $\eta :=%
\func{Im}w_{2}-\left\vert w_{1}\right\vert ^{2k}$.

In \cite{Dia}, Diaz defined and analyzed a pseudometric $d\left( \zeta
,\omega \right) $, globally suited to the complex geometry of $\partial
\Omega _{k}$, which was arrived at by a study of the Szeg\"{o} kernel. This
allows the treatment of the Szeg\"{o} kernel as a singular integral kernel:%
\begin{equation*}
d\left( \zeta ,\omega \right) :=\left\vert \left( \frac{i}{2}\left[ s-t\right]
+\frac{\left\vert z_{1}\right\vert ^{2k}+\left\vert w_{1}\right\vert ^{2k}}{2%
}+\frac{\mu +\eta }{2}\right) ^{\frac{1}{k}}-z_{1}\overline{w_{1}}%
\right\vert ^{\frac{1}{2}}.
\end{equation*}%
Then the pseudometric balls are defined as $B_{\zeta} \left( \delta \right)
=B_{\zeta}^{d}\left( \delta \right) := \left\{ \omega \in \partial \Omega
_{k}:d\left( \zeta ,\omega \right) <\delta \right\} $ and the volume of the associated 
ball is%
\begin{equation*}
V\left( B_{\zeta} \left( \delta \right) \right) =4\pi \delta ^{2}\left( \frac{%
\sin ^{2k-2}\left( \frac{\pi }{k}\right) }{4}\left\vert z\right\vert
^{2k-2}\delta ^{2}+\frac{1}{2}\delta ^{2k}\right) ,
\end{equation*}%
and it is shown that this volume measure is doubling. Thus $S$ is a standard
Calder\'{o}n--Zygmund operator in the setting of the space of homogeneous
type $\left( \partial \Omega _{k},d,V\right) $.  We can again deduce a two-weight inequality in this setting from Theorem \ref{main theorem}.

\subsection{Riesz Transforms Associated with the sub-Laplacian on Stratified Nilpotent Lie Groups}

Recall that a connected, simply connected nilpotent Lie group $\mathcal{G}$
is said to be stratified if its left-invariant Lie algebra $\mathfrak{g}$
(which is assumed real and of finite dimension) admits a direct sum
decomposition%
\begin{equation*}
\mathfrak{g}=\oplus _{i=1}^{k}V_{i}\ ,\ \ \ \ \ \text{where }\left[
V_{1},V_{i}\right] =V_{i+1}\text{ for }i\leq k-1.
\end{equation*}%
One identifies $\mathcal{G}$ and $\mathfrak{g}$ via the exponential map $%
\exp :\mathfrak{g}\rightarrow \mathcal{G}$, which is a diffeomorphism. We
fix once and for all a (bi-invariant) Haar measure $dg$ on $\mathcal{G}$
(which is just the lift of Lebesgue measure on $\mathfrak{g}$ via $\exp $).
There is a natural family of dilations on $\mathfrak{g}$ defined for $r>0$
as follows:%
\begin{equation*}
\delta _{r}\left( \sum_{i=1}^{k}v_{i}\right) =\sum_{i=1}^{k}r^{i}v_{i}\ ,\ \
\ \ \ \text{with }v_{i}\in V_{i}\ .
\end{equation*}%
This permits the definition of a dilation on $\mathcal{G}$, which we
continue to denote by $\delta _{r}$. We choose a basis $%
\left\{ X_{1},...,X_{n}\right\} $ for $V_{1}$ and consider the sub-Laplacian 
$\bigtriangleup := \sum_{j=1}^{n}X_{j}^{2}$. Observe that $X_{j}$, $%
1\leq j\leq n$, is homogeneous of degree $1$, and that $\bigtriangleup $ is
homogeneous of degree $2$, with respect to dilations in the sense that $%
X_{j}\left( f\circ \delta _{r}\right) =r\left( X_{j}f\right) \circ \delta
_{r}$, $1\leq j\leq n$, and $\delta _{\frac{1}{r}}\circ \bigtriangleup \circ
\delta _{r}=r^{2}\bigtriangleup $ for all $r>0$.

Let $Q$ denote the homogeneous dimension of $\mathcal{G}$, namely\ $%
Q=\sum_{i=1}^{k}i\dim V_{i}$. Let $p_{h}$ for $h>0$ be the heat kernel, i.e.
the integral kernel of $e^{h\bigtriangleup }$ on $\mathcal{G}$. For
convenience we set $p_{h}\left( g\right) =p_{h}\left( g,o\right) $, which
means that we identify the
integral kernel with the convolution kernel, and we set $p\left( g\right)
=p_{1}\left( g\right) $.

Recall that (c.f. Folland and Stein \cite{FoSt}) $p_{h}\left( g\right) =h^{-%
\frac{Q}{2}}p\left( \delta _{\frac{1}{\sqrt{h}}}\left( g\right) \right) $
for all $h>0$ and $g\in \mathcal{G}$. The kernel of the $j^{th}$ Riesz
transform $R_{j}=X_{j}\left( -\bigtriangleup \right) ^{-\frac{1}{2}}$, $%
i\leq j\leq n$, is written simply as $K_{j}\left( g,g^{\prime }\right)
=K_{j}\left( \left( g^{\prime }\right) ^{-1}g\right) $ where%
\begin{equation*}
K_{j}\left( g\right) =\frac{1}{\sqrt{\pi }}\int_{0}^{\infty }h^{-\frac{1}{2}%
}X_{j}p_{h}\left( g\right) dh=\frac{1}{\sqrt{\pi }}\int_{0}^{\infty }h^{-%
\frac{Q}{2}-1}\left( X_{j}p\right) \left( \delta _{\frac{1}{\sqrt{h}}%
}g\right) dh.
\end{equation*}%
The standard metric $d$ on $G$ is defined as $d\left( g,g^{\prime }\right)
:= \rho \left( \left( g^{\prime }\right) ^{-1}g\right) $ where $\rho $
is the homogeneous norm on $\mathcal{G}$ (\cite[Chapter 1, Section A]{FoSt}%
). The measure $dg$ \ is then a doubling measure. It is well known that $S$
is a standard Calder\'{o}n--Zygmund operator in the setting of the space of
homogeneous type $\left( \mathcal{G},d,dg\right) $, and once more we can deduce a two weight inequality from Theorem \ref{main theorem}.

\subsection{Area Functions\label{Subsection area}}

In the Euclidean homogeneous space $\left( \mathbb{R}^{n},\left\vert \cdot
\right\vert ,dx\right) $, the\ Littlewood-Paley $g$-function and the Lusin
area function are both examples of $\mathcal{H}_{1}\rightarrow \mathcal{H}%
_{2}$ Calder\'{o}n--Zygmund operators with $\mathcal{H}_{1}=\mathbb{C}$.
Indeed they are given by%
\begin{eqnarray*}
\mathbf{g}\left( f\right) \left( x\right) &=&\left( \int_{0}^{\infty
}\left\vert \nabla \left( P_{t}\ast f\right) \left( x\right) \right\vert
^{2}tdt\right) ^{\frac{1}{2}}=\left\vert Gf\left( x\right) \right\vert _{%
\mathcal{G}_{n}}, \\
\boldsymbol{S}f\left( x\right) &=&\left( \iint_{\Gamma _{0}}\left\vert \nabla
\left( P_{t}\ast f\right) \left( x-y\right) \right\vert ^{2}\frac{dtdy}{%
t^{n-1}}\right) ^{\frac{1}{2}}=\left\vert Sf\left( x\right) \right\vert _{%
\mathcal{H}_{n+1}},
\end{eqnarray*}%
where%
\begin{eqnarray}
Gf\left( x\right) \left( t\right) &:= &\nabla \left( P_{t}\ast f\right)
\left( x\right) ,\ \ \ \ \ t\in \left( 0,\infty \right),  \label{def S} \\
Sf\left( x\right) \left( t,y\right) &:= &\nabla \left( P_{t}\ast
f\right) \left( x-y\right) ,\ \ \ \ \ \left( t,y\right) \in \Gamma _{0}\ , 
\notag
\end{eqnarray}%
and $\mathcal{G}_{n}$ and $\mathcal{H}_{n+1}$ are Hilbert spaces with norms%
\begin{equation*}
\left\vert g\right\vert _{\mathcal{G}_{n}}:= \sqrt{\int_{0}^{\infty
}\left\vert g\left( t\right) \right\vert ^{2}tdt}\text{\ \ \ \ \  and\ \ \  }\left\vert
h\right\vert _{\mathcal{H}_{n+1}}:= \sqrt{\iint_{\Gamma _{0}}\left\vert
h\left( t,y\right) \right\vert ^{2}t^{1-n}dtdy}<\infty ,
\end{equation*}%
and $\Gamma _{0}$ is a fixed cone with vertex at the origin in $\mathbb{R}%
^{n+1}$ opening upward into the upper half space $\mathbb{R}_{+}^{n+1}$.

In a general\ space of homogeneous type $\left( X,d,\mu \right) $, the
notion of a Poisson kernel can be approached in several different ways.
First, if $\mathcal{D}$ is a dyadic grid on $X$ and $\{h_{Q}:Q\in \mathcal{D}%
\}$ is the collection of Haar wavelets constructed in \cite{KLPW}, then with
the one-dimensional projection $\bigtriangleup _{Q}$ defined by $%
\bigtriangleup _{Q}f:=\left\langle f,h_{Q}\right\rangle_{\mu} h_{Q}$, we have%
\begin{equation}
f=\sum_{Q\in \mathcal{D}}\bigtriangleup _{Q}f=\sum_{Q\in \mathcal{D}%
}\bigtriangleup _{Q}\bigtriangleup _{Q}f,\ \ \ \ \ f\in L^{2}\left( \mu
\right) ,  \label{ortho wavelet}
\end{equation}%
where the orthonormal property and the self-adjointness of Haar projections gives the second sum $\sum_{Q\in \mathcal{D}%
}\bigtriangleup _{Q}\bigtriangleup _{Q}f$, usually called a Calder\'{o}n
reproducing formula for $f$. We can then define a discrete Poisson operator
by%
\begin{equation*}
P_{k}f:= \sum_{Q\in \mathcal{D}:\ l \left( Q\right) \geq
2^{k}}\bigtriangleup _{Q}\bigtriangleup _{Q}f.
\end{equation*}%
However, the kernel of $P_{k}$ is not Lipschitz continuous, and thus fails
to be a Calder\'{o}n--Zygmund kernel as defined above. The following smoother
construction of a Poisson kernel$\ $had been introduced much earlier by R.
Coifman, see \cite{DaJoSe} where this first appears.

\subsubsection{Coifman's Construction of a Calder\'{o}n Reproducing Formula}

Start with a smooth function $h:\left( 0,\infty \right) \rightarrow \left(
0,\infty \right) $ that equals $1$ on $\left( 0,\frac{1}{2}\right] $ and $0$
on $\left[ 2,\infty \right) $. Let $T_{k}$ be the operator with kernel $%
2^{k}h\left( 2^{k}d\left( x,y\right) \right) $ so that $\frac{1}{C}\leq T_{k}%
\mathbf{1}\leq C$ for some positive constant $C$. Let $M_{k}$ be the
operator of multiplication by $\frac{1}{T_{k}\mathbf{1}}$, and let $W_{k}$
be the operator of multiplication by $\frac{1}{T_{k}\left( \frac{1}{T_{k}%
\mathbf{1}}\right) }$. Then the operator 
\begin{equation*}
S_{k}:= M_{k}T_{k}W_{k}T_{k}M_{k}\ ,
\end{equation*}%
has kernel $S_{k}\left( x,y\right) $ that satisfies%
\begin{eqnarray}
&&S_{k}\left( x,y\right) =0\text{ if }d\left( x,y\right) \geq C\frac{1}{2^{k}%
}\text{ and }\left\Vert S_{k}\right\Vert _{\infty }\leq C2^{k},  \label{sat}
\\
&&\left\vert S_{k}\left( x,y\right) -S_{k}\left( x^{\prime },y\right)
\right\vert +\left\vert S_{k}\left( y,x\right) -S_{k}\left( y,x^{\prime
}\right) \right\vert \leq C2^{k}\left[ 2^{k}d\left( x,x^{\prime }\right) %
\right] ^{\varepsilon },  \notag \\
&&\int_{X} S_{k}\left( x,y\right) d\mu \left( y\right) := 1\text{ and }\int_{X}
S_{k}\left( x,y\right) d\mu \left( x\right) := 1.  \notag
\end{eqnarray}%
Thus with $D_{k}:= S_{k+1}-S_{k}$ we have%
\begin{equation*}
f=\sum_{k\in \mathbb{Z}}D_{k}f,\ \ \ \ \ f\in L^{2}\left( \mu \right) .
\end{equation*}%
Next fix $N\in \mathbb{N}$ so large that if $T^{N}:= \sum_{k\in \mathbb{Z%
}}D_{k}^{N}D_{k}$ where $D_{k}^{N}:= \sum_{\left\vert j\right\vert \leq
N}D_{k+j}$, then $T^{N}$ is invertible on $L^{2}\left( \mu \right) $. It
follows that%
\begin{equation*}
f=T^{N}T^{-N}f=\sum_{k\in \mathbb{Z}}D_{k}^{N}D_{k}T^{-N}f=\sum_{k\in 
\mathbb{Z}}E_{k}\widetilde{E}_{k}f,\ \ \ \ \ f\in L^{2}\left( \mu \right) ,
\end{equation*}%
where $E_{k}:= D_{k}^{N}$ and $\widetilde{E}_{k}:= D_{k}T^{-N}$.
This latter formula is usually called a Calder\'{o}n reproducing formula,
and substitutes for the orthonormal wavelet formula (\ref{ortho wavelet}).

\subsubsection{Discrete $g$-functions and Area functions}

The function%
\begin{equation*}
\boldsymbol{g}\left( f\right) \left( x\right) := \sum_{k=-\infty
}^{\infty }\left\vert D_{k}f\left( x\right) \right\vert ^{2}=\left\vert
Gf\left( x\right) \right\vert _{\ell ^{2}\left( \mathbb{Z}\right) },\ \ \ \
\ Gf\left( x\right) := \left\{ D_{k}f\left( x\right) \right\} _{k\in 
\mathbb{Z}}
\end{equation*}%
plays the role of a $g$-function in a homogeneous space $X$. We could also
replace $D_{k}$ by $E_{k}:= D_{k}^{N}$ in this formula giving an
alternative $g$-function. Property (\ref{sat}), together with an application
of the Cotlar-Stein lemma, shows that in either case $\boldsymbol{g}$ is a
Hilbert space valued Calder\'{o}n--Zygmund operator on $X$. The function%
\begin{equation*}
\boldsymbol{S}f\left( x\right) := \sum_{k=-\infty }^{\infty }\left\vert 
\widetilde{E}_{k}f\left( x\right) \right\vert ^{2}=\left\vert Sf\left(
x\right) \right\vert _{\ell ^{2}\left( \mathbb{Z}\right) },\ \ \ \ \
Sf\left( x\right) := \left\{ \widetilde{E}_{k}f\left( x\right) \right\}
_{k\in \mathbb{Z}},
\end{equation*}%
plays the role of an area function in $X$, and \cite[Theorem 3.4]{HaSa}
shows that the kernel of $\boldsymbol{S}$ satisfies (\ref{size Hilbert}),
and the boundedness on $L^{2}\left( \mu \right) $ is proved in \cite{DaJoSe}%
. Thus $\boldsymbol{S}$ is a Hilbert space valued Calder\'{o}n--Zygmund
operator on $X$, and Theorem \ref{Hilbert} yields the following two weight
norm inequality - a stronger result was obtained in Euclidean space by\
Lacey and Li \cite{LaLi}, stronger in the sense that neither the dual
testing condition nor the dual pivotal condition was needed and they didn't need to test over all unit vectors.

\begin{theorem}
Let $u $ and $v $ be two locally finite positive Borel measures on 
$X$. Suppose that $u(\{x\})\cdot v(\{x\})=0$ for $x\in X$.
Suppose the above $\mathcal{A}_{2}$ and pivotal conditions hold. Suppose
also that the following testing conditions hold for the area function $%
\boldsymbol{S}$ as defined above: for every ball $B\subset X$, we have the
following testing conditions, with $\mathbf{1}_{Q}$ taken as the indicator
of $Q$: 
\begin{equation*}
\Vert \boldsymbol{S}(u \mathbf{1}_{Q})\Vert _{L_{\mathcal{H}%
_{2}}^{2}(v )}\leq \mathcal{T}\Vert \mathbf{1}_{Q}\Vert _{L^{2}(u
)},
\end{equation*}%
\begin{equation*}
\Vert \boldsymbol{S}^{\ast }(v \mathbf{e}_2\mathbf{1}_{Q})\Vert _{L^{2}(u
)}\leq \mathcal{T}\Vert 1_{Q}\Vert _{L^{2}(v)} \quad \text{for all unit vectors }\mathbf{e}_{2}\text{
in }\mathcal{H}_{2}.
\end{equation*}%
Then there holds $\mathcal{N}\lesssim \mathcal{A}_{2}+\mathcal{T}+%
\mathcal{V}$.
\end{theorem}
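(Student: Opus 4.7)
The plan is to derive this theorem as an immediate corollary of Theorem \ref{Hilbert} by recognizing the area function $\boldsymbol{S}$ as an $\mathcal{H}_1 \to \mathcal{H}_2$ Calder\'on--Zygmund operator with $\mathcal{H}_1 = \mathbb{C}$ and $\mathcal{H}_2 = \ell^{2}(\mathbb{Z})$. With these identifications, the operator $\boldsymbol{S}$ acts by $\boldsymbol{S}f(x) := \{\widetilde{E}_k f(x)\}_{k \in \mathbb{Z}} \in \ell^{2}(\mathbb{Z})$, and its operator-valued kernel sends $z \in \mathbb{C}$ to the sequence whose $k$-th entry is $z$ times the kernel of $\widetilde{E}_k$.

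First I would verify the hypotheses of Theorem \ref{Hilbert}. The unweighted $L^2(\mu)$ boundedness of $\boldsymbol{S}$ is exactly the theorem of David, Journ\'e and Semmes \cite{DaJoSe} cited in the paper. The size and smoothness bounds \eqref{size Hilbert} for the operator-valued kernel of $\boldsymbol{S}$ are precisely the content of \cite[Theorem 3.4]{HaSa}, so the Calder\'on--Zygmund conditions are satisfied with some smoothness parameter $\kappa \in (0,1)$. The $\mathcal{A}_2$, pivotal, and common-point-mass hypotheses are assumed directly in the statement, so they transfer without change.

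Next I would reconcile the testing hypotheses in the statement with those required by Theorem \ref{Hilbert}. Because $\mathcal{H}_1 = \mathbb{C}$, there is essentially only one unit vector $\mathbf{e}_1$ (up to a unimodular scalar), so the forward testing condition
\[
\Vert T(\mathbf{e}_1 \mathbf{1}_Q u) \Vert_{L_{\mathcal{H}_2}^{2}(v)} \leq \mathcal{T} \Vert \mathbf{1}_Q \Vert_{L^{2}(u)}
\]
of Theorem \ref{Hilbert} reduces to the scalar-tested condition $\Vert \boldsymbol{S}(u \mathbf{1}_Q) \Vert_{L_{\mathcal{H}_2}^{2}(v)} \leq \mathcal{T} \Vert \mathbf{1}_Q \Vert_{L^{2}(u)}$ assumed in the statement. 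Similarly, the dual testing condition of Theorem \ref{Hilbert} becomes $\Vert \boldsymbol{S}^{\ast}(v \mathbf{e}_2 \mathbf{1}_Q) \Vert_{L^{2}(u)} \leq \mathcal{T} \Vert \mathbf{1}_Q \Vert_{L^2(v)}$ quantified over all unit $\mathbf{e}_2 \in \mathcal{H}_2$, which is precisely the second hypothesis. With all hypotheses verified, a direct invocation of Theorem \ref{Hilbert} yields $\mathcal{N} \lesssim \mathcal{A}_2 + \mathcal{T} + \mathcal{V}$, completing the proof.

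The main (and only) obstacle is essentially bookkeeping: making sure the operator-valued interpretation of $\boldsymbol{S}$ agrees with the framework of Section \ref{s:Appendix}, and confirming that the scalar $L^2(u)$ norm appearing on the right-hand side of the first testing inequality is the correct translation of $L_{\mathcal{H}_1}^{2}(u)$ when $\mathcal{H}_1 = \mathbb{C}$. Since the paper establishes Theorem \ref{Hilbert} explicitly in a general Hilbert-space setting and the kernel estimates have been done in \cite{HaSa,DaJoSe}, no further genuinely new work is required beyond this verification.
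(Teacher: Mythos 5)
Your proposal is correct and is essentially identical to the paper's own argument: the paper also recognizes $\boldsymbol{S}$ as an $\mathcal{H}_1 \to \mathcal{H}_2$ Calder\'on--Zygmund operator with $\mathcal{H}_1 = \mathbb{C}$ and $\mathcal{H}_2 = \ell^2(\mathbb{Z})$, cites \cite{DaJoSe} for unweighted $L^2$ boundedness and \cite[Theorem 3.4]{HaSa} for the kernel estimate \eqref{size Hilbert}, and then invokes Theorem~\ref{Hilbert} directly. Your observation that the forward testing condition collapses to a scalar test because $\mathcal{H}_1 = \mathbb{C}$ is exactly why the stated hypotheses match those of Theorem~\ref{Hilbert}.
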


\subsection{Classical Riesz transforms}

As an application to known work we can look at the two weight inequality for
the Hilbert Transform.  We define our Hilbert operator as below for a signed
measure $v $ on $\mathbb{R}$: 
\begin{equation*}
Hv (x):= p.v.\int_{\mathbb{R}}\frac{1}{x-y}v (dy).
\end{equation*}%
For two weights $u ,v $, the inequality we are interested in is 
\begin{equation*}
\Vert H(u f)\Vert _{L^{2}(v )}\lesssim \Vert f\Vert _{L^{2}(u
)}.
\end{equation*}%
Along with $\mathcal{A}_{2}$ for the pair of weights $u ,v $ given
we have the following testing conditions holding uniformly over intervals $I$%
: 
\begin{align*}
& \int_{I}|H(1_{I}u )|^{2}v (dx)\leq \mathcal{H}^{2}u (I),
\\
& \int_{I}|H(1_{I}v )|^{2}u (dx)\leq \mathcal{H}^{2}v (I).
\end{align*}%
Here $\mathcal{H}$ denotes the smallest constants for which these inequalities are true uniformly over all intervals $I$.

In beautiful series of papers, Nazarov, Treil and Volberg, \cite{NTV1,NTV2,NTV} have developed a
sophisticated approach towards proving the sufficiency of these testing
conditions combined with the improvement of the two weight $\mathcal{A}_{2}$
condition. The improvement is described below using a variant of Poisson
integral. For an interval $I$ and measure $v $, 
\begin{align*}
& P(I,v ):= \int_{\mathbb{R}}\frac{|I|}{(|I|+\func{dist}(x,I))^{2}}%
\omega (dx), \\
& \sup_{I}P(I,v )\cdot P(I,u ):=\mathcal{A}_{2}^{2}<\infty .
\end{align*}%
We will refer to the last line above as the $\mathcal{A}_{2}$ condition. In \cite[Theorem 2.2]{NTV} Nazarov, Treil and Volberg proved the
sufficiency of the $\mathcal{A}_{2}$ and testing conditions above for the
two weight inequality of the Hilbert transform in the presence of the pivotal condition given by 
\begin{equation*}
\sum_{r=1}^{\infty }v (I_{r})P(I_{r},1_{I_{0}}u )^{2}\leq \mathcal{V}^{2}u (I_{0}),
\end{equation*}%
and its dual, where the inequality is required to hold for all intervals $%
I_{0}$ and decompositions $\{I_{r}:r\geq 1\}$ of $I_{0}$ into disjoint
intervals $I_{r}\subsetneq I_{0}$.  We have taken inspiration from this proof and condition in providing the proof of our main result.  In \cite{L,Lac,LaSaUr2,LaSaShUr} the pivotal condition was removed as a side condition and replaced with an energy condition, ultimately yielding a characterization of the two weight inequality for the Hilbert transform.


In higher dimension we look at two weight inequalities for Riesz transforms.   Earlier work appears in \cite{LaWi, SaShUr} where certain two weight inequalities for the Riesz transforms (and fractional versions) were studied.
Namely for two weights, nonnegative locally finite Borel measures $u,v $ on $%
\mathbb{R}^{n}$, we are interested in the following inequality for the $d$
dimensional Riesz transform 
\begin{equation*}
\left\Vert \int_{\mathbb{R}^{n}}f(y)\frac{x-y}{|x-y|^{d+1}}u
(dy)\right\Vert _{L^{2}(v )}\leq \mathcal{N}\Vert f\Vert _{L^{2}(u
)}.
\end{equation*}%
Here we take $0<d\neq n-1\leq n$ and $\mathcal{N}$ is the best constant in
the inequality above. 

The $\mathcal{A}_{2}$ type condition is expressed
in terms of a Poisson type operator. For a cube $Q\subset \mathbb{R}^{n}$, we take 
\begin{equation*}
P(u ,Q):= \int_{\mathbb{R}^{n}}\frac{|Q|^{d/n}}{|Q|^{2d/n}+\func{%
dist}(x,Q)^{2d}}u (dx).
\end{equation*}%
Using the $\mathcal{A}_{2}$, testing conditions and pivotal condition we
can obtain sufficient conditions for the two weight inequality for the $d$
dimensional Riesz transform by the above Theorem \ref{main theorem} on $(\mathbb{R}%
^{n},dx,|\cdot |_{n})$, viewed as space of homogeneous type where $|\cdot|_{n}$
is a standard metric on $\mathbb{R}^{n}$.

\subsection{Riesz Transform Associated with Certain Schr\"odinger Operators}

Consider $L:=-\Delta+\mu$, which is a Schr\"odinger operator  with a non-negative Radon measure  $\mu$ on $\mathbb{R}^n$ for $n\geq3$.
We assume that $\mu$ satisfies  the following conditions: there exists a
positive constant  $\sigma_0\in(1,\infty)$ such that
\begin{align}\label{condition1}
\mu(B(x,r))\lesssim \left(\frac{r}{R}\right)^{n-2+\sigma_0} \mu(B(x,R))
\end{align}
and
\begin{align}\label{condition2}
\mu(B(x,2r))\lesssim \left\{\mu(B(x,r))+r^{n-2}\right\}
\end{align}
for all $x\in \mathbb R^n$ and $0<r<R$, where $B(x,r)$ denotes the open ball centered at $x$ with radius $r$.
As pointed in \cite{Sh99}, condition \eqref{condition1} may be regarded as scale-invariant Kato-condition,
and \eqref{condition2} says that the measure
$\mu$ is doubling on balls satisfying $\mu(B(x,r))\geq cr^{n-2}$. We will also assume that $\mu\not\equiv 0.$
When  $d\mu=V(x)dx$ and $V\geq 0$ is in the reverse H\"older class $(RH)_{n}$,
i.e.,
\begin{equation}
\left(\frac{1}{|B(x, r)|}\int_{B(x, r)} V(y)^{n}~dy\right)^{1/n}\leq
\frac{C}{|B(x, r)|}\int_{B(x, r)}V(y)~dy,
\end{equation}
then $\mu$ satisfies the conditions \eqref{condition1} and \eqref{condition2} for some $\sigma_0>1.$ However,
in general, measures which satisfy \eqref{condition1} and \eqref{condition2} need not be absolutely continuous
with respect to the Lebesgue measure on $\mathbb R^n.$ For instance, when $d\mu=d\sigma(x_1, x_2) dx_3\cdots dx_n$,
where $\sigma$ is a doubling measure on ${\mathbb R}^2$, then $\mu$ satisfies
\eqref{condition1} and \eqref{condition2} for some $\sigma_0>1.$

It is well-known that the Riesz transform $ \nabla L^{-{1\over2}} $ is bounded on $L^2(\mathbb R^n)$. Moreover,
let $K(x,y)$ be the kernel of  $ \nabla L^{-{1\over2}} $, the Riesz transforms associated to $L$.
Then it was proved in \cite{Sh99} that 
\begin{align}\label{size}
&|K(x,y)|\lesssim { 1\over |x-y|^n } 
\end{align}
and that for $|x-x'|\leq {1\over 2 }|x-y|$,
\begin{align}\label{smooth}
&\qquad|K(x,y)-K(x',y)|\lesssim  \Big({  |x-x'|\over |x-y| } \Big)^{\sigma_0-1}  { 1\over |x-y|^n }.
\end{align}
Here the implicit constants are independent of $x$ and $y$.


\section{Preliminaries on Spaces of Homogeneous Type\label{s:Pre}}

Let $(X,d,\mu )$ be a space of homogeneous type as in Section \ref{s1}.

\subsection{A System of Dyadic Cubes}

We will recall from \cite{TA} a construction of dyadic cubes, which is a
deep elaboration of work by M.~Christ \cite{Chr}, as well as that of
Sawyer--Wheeden \cite{ER}. We summarize the dyadic construction of random
dyadic systems from \cite{HyMa} and \cite{TA} in the following theorem.
First we need to define an appropriate notion of `reference points' or `lattice points' in $X$.

\begin{definition}
A set of points $\left\{ x_{\alpha }^{k}\right\} _{k\in \mathbb{Z},\ \alpha
\in \mathcal{A}_{k}}\subset X $ is said to be a \emph{set of reference
points} if there exist constants $0<c_{0}\leq C_{0}<\infty $ and $0<\delta
<1 $ such that $12A_{0}^{3}C_{0}\delta \leq c_{0}$ and%
\begin{eqnarray*}
d\left( x_{\alpha }^{k},x_{\beta }^{k}\right) &\geq &c_{0}\delta ^{k},\ \ \
\ \ \alpha \neq \beta , \\
\min_{\alpha }d\left( x,x_{\alpha }^{k}\right) &\leq &C_{0}\delta ^{k},\ \ \
\ \ x\in X.
\end{eqnarray*}
\end{definition}

The following construction is from \cite[Theorems 5.1 and 5.6]{TA}.

\begin{theorem}
\label{t:randomgrid}
\label{theorem dyadic}Given a set of reference points $\left\{ x_{\alpha
}^{k}\right\} _{k\in \mathbb{Z},\ \alpha \in \mathcal{A}_{k}}$ with parameters $c_0, C_0$ and $\delta$,  and sufficiently small $0<\delta <1$ $($e.g. $144A_{0}^{8}\delta \leq 1$$)$, there
exists a probability space $\left( \Omega ,\mathbb{P}\right) $ such that
every $\omega \in \Omega $ defines a dyadic system $\mathcal{D}\left( \omega
\right) :=\left\{ Q_{\alpha }^{k}\left( \omega \right) \right\} _{k\in 
\mathbb{Z},\ \alpha \in \mathcal{A}_{k}}$ related to new dyadic points $%
\left\{ z_{\alpha }^{k}\left( \omega \right) \right\} _{k\in \mathbb{Z},\
\alpha \in \mathcal{A}_{k}}$ with the following geometric properties: 
for some $c_1$ and $C_1$ depending on $c_0, C_0, A_0$ and $\delta$,
\begin{enumerate}
\item If $\ell \geq k$, then either $Q_{\beta}^{\ell }\left( \omega \right) \subset Q_{\alpha }^{k}\left( \omega \right)$ or $Q_{\beta }^{\ell }\left( \omega \right) \cap Q_{\alpha}^{k}\left( \omega \right) =\emptyset$;
\item $\displaystyle X=\bigcup\limits_{\alpha }Q_{\alpha}^{k}\left( \omega \right)$ for all $k\in \mathbb{Z}$;
\item $\displaystyle B\left( z_{\alpha }^{k}\left( \omega \right)
,c_{1}\delta ^{k}\right) \subset Q_{\alpha }^{k}\left( \omega \right)
\subset B\left( z_{\alpha }^{k}\left( \omega \right) ,C_{1}\delta
^{k}\right) =: B\left( Q_{\alpha }^{k}\left( \omega \right) \right)$;
\item If $\ell \geq k$ and $Q_{\beta }^{\ell}\left( \omega \right) \subset Q_{\alpha }^{k}\left( \omega \right)$ then $B\left(Q_{\beta }^{\ell }\left( \omega \right) \right) \subset B\left( Q_{\alpha }^{k}\left( \omega \right) \right)$ ,

\end{enumerate}
and the following probabilistic property: There are positive constants $%
C_{2},\eta >0$ such that for every $x\in X,\tau >0$ and $k\in \mathbb{Z}$, 
\begin{equation}
\mathbb{P}\left( \left\{ \omega \in \Omega :x\in \bigcup\limits_{\alpha
}\partial _{\tau \delta ^{k}}Q_{\alpha }^{k}\left( \omega \right) \right\}
\right) \leq C_2\tau ^{\eta }\ .  \label{surgery}
\end{equation}
We also have the following containment property:
\begin{equation}\label{eq:contain}
    B(x^k_{\alpha},c_1\delta^k)
    \subseteq Q^k_{\alpha}\subseteq B(x^k_{\alpha},C_1\delta^k)
    =: B(Q^k_{\alpha});
\end{equation}
\end{theorem}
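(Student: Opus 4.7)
The plan is to follow the Hytönen--Kairema construction from \cite{TA}, which itself builds on Christ's greedy construction with the Nazarov--Treil--Volberg randomization superimposed. First, I would construct, for each level $k$, a tree structure on the reference points that assigns to each $x_\beta^{k+1}$ a ``parent'' $x_{\pi_\omega^k(\beta)}^k$. The randomness $\omega$ enters precisely in this assignment: among all admissible parents (those within the requisite distance $(2A_0)^{-1}\delta^{k}$-or-so controlled by the constants $c_0,C_0$), one is selected according to an independent uniform distribution, and these choices across $(k,\beta)$ give the probability space $(\Omega,\mathbb{P})$.

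Next, I would define the dyadic cubes by a greedy assignment with nested constraints: with $z_\alpha^k(\omega)$ defined as an appropriate perturbation/representative of $x_\alpha^k$ chosen to respect the tree, let $\tilde Q_\alpha^k(\omega)$ be the set of points whose nearest ancestor at level $k$ in the random tree is $x_\alpha^k$, and take $Q_\alpha^k(\omega)$ as the union of all level-$\ell$ cells descending from $x_\alpha^k$ in the tree, for $\ell \to \infty$. The nesting property (1) and the covering property (2) then follow tautologically from the tree construction. The containment (3) and (4) require the quasi-triangle inequality together with the separation/covering constants: the inner radius $c_1 \delta^k$ comes from the separation $d(x_\alpha^k,x_\beta^k) \geq c_0 \delta^k$ minus the tree-perturbation, and the outer radius $C_1 \delta^k$ comes from the covering bound $C_0\delta^k$ plus the diameter of the tree of descendants, summed as a geometric series in $\delta$. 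The smallness condition $144 A_0^8 \delta \leq 1$ ensures this geometric series converges with room to spare.

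The hard part is the probabilistic boundary estimate \eqref{surgery}, which is the whole reason for the randomization. The strategy is to show that the boundary of $Q_\alpha^k(\omega)$ sits near the ``hyperplanes'' where the greedy nearest-ancestor choice flips, and that for any fixed $x \in X$ and level $k$, the event
\begin{equation*}
\left\{ \omega : d(x, \partial Q_\alpha^k(\omega)) < \tau \delta^k \text{ for some } \alpha \right\}
\end{equation*}
depends only on those random choices $\pi_\omega^\ell$ at scales $\ell \geq k$ whose corresponding reference points lie within $O(\delta^\ell)$ of $x$, and the probability that any such choice pushes the boundary within $\tau \delta^k$ of $x$ is bounded by $C\tau^\eta$ at each scale. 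Summing a geometric series in scales $\ell \geq k$ with ratio $\delta^\eta < 1$ (for a suitable $\eta > 0$ depending on $A_0$ and $\delta$) gives the desired bound $C_2 \tau^\eta$. The delicate point is that the dependence on $\omega$ of the position of $\partial Q_\alpha^k(\omega)$ is only Hölder in $\omega$ through the iterated tree, so one needs to track the propagation of boundary perturbations across scales; this is exactly the calculation carried out in \cite[Section 5]{TA}, and I would invoke it here rather than reproduce it.

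Finally, the containment restatement \eqref{eq:contain} is identical to property (3) once one replaces $z_\alpha^k$ by $x_\alpha^k$ up to an absorbed constant (adjusting $c_1,C_1$ accordingly via the triangle inequality and the fact that $z_\alpha^k(\omega)$ lies within $O(\delta^k)$ of $x_\alpha^k$), so it requires no additional argument beyond relabeling constants.
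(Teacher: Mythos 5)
The paper does not actually prove this theorem but simply cites it as coming from \cite[Theorems 5.1 and 5.6]{TA} (and \cite{HyMa}), and your sketch is a faithful high-level summary of exactly that Hyt\"onen--Kairema construction, deferring the hard probabilistic boundary estimate to \cite[Section 5]{TA} just as the paper does. So your approach agrees with the paper's; the only caveats are cosmetic (e.g., the precise mechanism by which the randomization enters the parent-choice partial order, and the verification of the ball-nesting in item (4), are stated somewhat loosely, but you correctly attribute the details to \cite{TA}).
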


\begin{definition}
\label{def dyadic system}We say that $\mathcal{D}\left( \omega \right)
=\left\{ Q_{\alpha }^{k} \left( \omega \right) \right\} _{k\in \mathbb{Z},\
\alpha \in \mathcal{A}_{k}}$ is a system of dyadic cubes if \textnormal{(1)---(4)} hold in Theorem \ref{t:randomgrid}.
Given a dyadic cube $Q_{\alpha }^{k}\left( \omega \right)$, we denote the quantity $\delta^k$ by
$l(Q_{\alpha }^{k})$, by analogy with the side length of a
Euclidean cube. 
\end{definition}

\subsection{An Explicit Haar Basis on Spaces of Homogeneous Type}

Next we recall the explicit construction in \cite{KLPW} of a Haar basis $%
\{h_{Q}^{\epsilon }:Q\in \mathcal{D},\epsilon =1,\dots ,M_{Q}-1\}$ for $%
L^{p}(X,\mu )$, $1<p<\infty $, associated to the dyadic cubes $Q\in \mathcal{%
D}$ as follows. Here $M_{Q}:=\#{\mathcal{H}}(Q)=\#\{R\in \mathcal{D}%
_{k+1}\colon R\subseteq Q\}$ denotes the number of dyadic sub-cubes (which
we will refer to as the \textquotedblleft children\textquotedblright ) the
cube $Q\in \mathcal{D}_{k}$ has; namely $\mathcal{H}(Q)$ is the collection
of dyadic children of $Q$. It is known in \cite{KLPW} that $%
\sup\limits_{Q\in \mathcal{D}}M_{Q}<\infty $.

\begin{theorem}
\label{thm:convergence} Let $(X,d,\mu )$ be a space of homogeneous type and
suppose $\mu $ is a positive locally finite Borel measure on $X$. For $1<p<\infty $, for each $%
f\in L^{p}(X,\mu )$, we have 
\begin{equation*}
f(x)=\sum_{Q\in \mathcal{D}}\sum_{\epsilon =1}^{M_{Q}-1}\langle
f,h_{Q}^{\epsilon }\rangle _{\mu }h_{Q}^{\epsilon }(x),
\end{equation*}%
where the sum converges (unconditionally) both in the $L^{p}(X,\mu )$-norm
and pointwise $\mu $-almost everywhere.
\end{theorem}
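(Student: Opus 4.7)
The plan is to identify the Haar expansion with a martingale-difference decomposition associated to the filtration generated by the dyadic grid $\mathcal{D}$, and then invoke standard martingale theory to obtain both modes of convergence simultaneously. For each $k \in \mathbb{Z}$, let $\mathcal{F}_k$ denote the $\sigma$-algebra generated by $\mathcal{D}_k = \{Q_\alpha^k : \alpha \in \mathcal{A}_k\}$ and let $\mathbb{E}_k f$ be the conditional expectation, which on each $Q \in \mathcal{D}_k$ is the $\mu$-average of $f$ over $Q$. The first step is to record from \cite{KLPW} that the system $\{h_Q^\epsilon : Q \in \mathcal{D},\ \epsilon = 1, \dots, M_Q - 1\}$ is an orthonormal basis of $L^2(X,\mu)$ and that for each $Q \in \mathcal{D}_k$,
\[
\sum_{\epsilon=1}^{M_Q-1} \langle f, h_Q^\epsilon \rangle_\mu h_Q^\epsilon \;=\; \big(\mathbb{E}_{k+1} f - \mathbb{E}_k f\big)\mathbf{1}_Q,
\]
so that summing over $Q \in \mathcal{D}_k$ recovers the martingale difference $d_k f := \mathbb{E}_{k+1} f - \mathbb{E}_k f$.

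Next I would prove the two-sided limit $\mathbb{E}_k f \to f$ in $L^p$ and $\mu$-a.e.\ as $k \to +\infty$, and $\mathbb{E}_k f \to 0$ in $L^p$ and $\mu$-a.e.\ as $k \to -\infty$. The first limit follows from Doob's martingale convergence theorem in $L^p$ combined with Lebesgue differentiation on $(X, d, \mu)$ (using the doubling property and that the cubes have diameter $\delta^k \to 0$, so the containment (\ref{eq:contain}) yields the standard shrinking-to-a-point behaviour). For the second limit, since $\mu(X) = \infty$ and the cubes $Q \in \mathcal{D}_k$ satisfy $Q \supset B(z^k_\alpha, c_1 \delta^k)$ with $\mu(B(z^k_\alpha, c_1\delta^k)) \to \infty$ as $k \to -\infty$, H\"older gives $|\mathbb{E}_k f| \leq \mu(Q)^{-1/p}\|f\|_{L^p} \to 0$ uniformly; this controls the $L^p$ and pointwise tails via the reverse martingale maximal inequality. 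Consequently, the symmetric partial sum
\[
S_{K,L} f \;:=\; \sum_{k=-K}^{L}\; \sum_{Q \in \mathcal{D}_k}\; \sum_{\epsilon=1}^{M_Q-1} \langle f, h_Q^\epsilon\rangle_\mu h_Q^\epsilon \;=\; \mathbb{E}_{L+1} f - \mathbb{E}_{-K} f
\]
converges to $f$ in $L^p(X,\mu)$ and $\mu$-a.e.\ as $K, L \to \infty$.

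For unconditional convergence, I would appeal to Burkholder's martingale transform inequality: for any sequence of signs $\varepsilon_k \in \{-1, +1\}$ (or more generally a predictable sequence bounded by $1$),
\[
\Big\| \sum_{k} \varepsilon_k d_k f \Big\|_{L^p(\mu)} \;\lesssim_p\; \Big\|\sum_k d_k f\Big\|_{L^p(\mu)} \;\lesssim_p\; \|f\|_{L^p(\mu)}, \qquad 1 < p < \infty.
\]
Applied with characteristic-type signs selecting arbitrary finite subcollections of cubes, this shows that rearrangements of the Haar expansion remain uniformly $L^p$-bounded on the dense subspace of finite Haar sums, and then a standard density argument yields unconditional $L^p$ convergence for every $f \in L^p(X,\mu)$. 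Unconditional $\mu$-a.e.\ convergence then follows from the $L^p$ boundedness of the associated maximal martingale transform operator (Burkholder--Davis).

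The main obstacle is the behaviour as $k \to -\infty$: although the $L^p$ estimate $|\mathbb{E}_k f| \leq \mu(Q)^{-1/p}\|f\|_{L^p}$ is clean, showing $\mu$-a.e.\ convergence to zero at the top of the filtration requires a reverse-martingale argument that must be carried out carefully in the non-Euclidean, only-quasi-metric setting where the dyadic grid is not nested across all of $X$ in the Euclidean sense but satisfies the nestedness from Theorem \ref{theorem dyadic}(1). The doubling condition together with $\mu(X) = \infty$ is exactly what is needed to make this work, and once it is established the rest of the argument is a direct transcription of the classical Haar/martingale theory to the homogeneous-type setting.
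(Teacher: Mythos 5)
The paper does not prove this theorem: it is stated as a recalled result, cited from \cite{KLPW}, so there is no ``paper's own proof'' to compare against. Your proposal reconstructs the standard martingale-theoretic proof, which is in fact the approach taken in \cite{KLPW}, so on that score your route is the right one. The identification of the intra-cube Haar sums with the martingale difference $\mathbb{D}_k f = \mathbf{E}_{k+1}f - \mathbf{E}_k f$ is correct (and is exactly what the paper records after Theorem~\ref{prop:HaarFuncProp}), and the treatment of the forward limit $k\to+\infty$ via Doob plus Lebesgue differentiation on the doubling space, and of the backward limit $k\to-\infty$ via $\mu(X)=\infty$ and H\"older, is sound. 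Two small comments on the backward limit: (i) the estimate $|\mathbb{E}_k f(x)|\le \mu(Q^k(x))^{-1/p}\|f\|_{L^p}$ gives pointwise (not uniform) decay, since $\mu(Q^k(x))$ depends on $x$; this is all that is needed, and one then upgrades to $L^p$-convergence via dominated convergence using the Doob maximal function $\sup_k|\mathbb{E}_k f|\in L^p$. (ii) There is no need to invoke ``reverse martingale'' theory; the ordinary dyadic maximal inequality already controls the full two-sided sequence.

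The one place where your sketch overreaches is the final sentence on \emph{a.e.\ unconditional} convergence. Burkholder's transform inequality gives uniform $L^p$-bounds for the $\pm 1$ sign transforms $\sum_k\varepsilon_k\mathbb{D}_k f$, which (together with orthogonality/disjointness within each generation) yields unconditional convergence in $L^p$-norm, and each fixed sign transform, being itself a bounded martingale, converges a.e. But ``a.e.\ convergence of every sign transform'' is not the same assertion as ``a.e.\ convergence of every rearrangement of the Haar series to $f$,'' and the latter does not follow from an appeal to ``the maximal martingale transform operator (Burkholder--Davis)'' without further argument. In fact the parenthetical ``(unconditionally)'' in the statement is most naturally read as qualifying only the norm convergence; pointwise a.e.\ convergence is proved for the generation-ordered partial sums $S_{K,L}f=\mathbb{E}_{L+1}f-\mathbb{E}_{-K}f$, as in the rest of your argument, and one should not claim a.e.\ unconditional convergence as a by-product of the $L^p$-unconditionality.
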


The following theorem collects several basic properties of the functions $h_{Q}^{\epsilon}$.

\begin{theorem}
\label{prop:HaarFuncProp} The Haar functions $h_{Q}^{\epsilon }$, $Q\in 
\mathcal{D}$, $\epsilon =1,\ldots ,M_{Q}-1$, have the following properties:

\begin{enumerate}

\item $h_{Q}^{\epsilon }$ is a simple Borel-measurable real function on $X$;

\item $h_{Q}^{\epsilon }$ is supported on $Q$;

\item $h_{Q}^{\epsilon }$ is constant on each $R\in {\mathcal{H}}(Q)$;

\item $\int_{Q}h_{Q}^{\epsilon }\,d\mu =0$ (cancellation);

\item $\langle h_{Q}^{\epsilon },h_{Q}^{\epsilon ^{\prime }}\rangle =0$ for $%
\epsilon \neq \epsilon ^{\prime }$, $\epsilon $, $\epsilon ^{\prime }\in
\{1,\ldots ,M_{Q}-1\}$;

\item the collection $\left\{ \mu (Q)^{-1/2}1_{Q}\right\} \cup \left\{
h_{Q}^{\epsilon }:\epsilon =1,\ldots ,M_{Q}-1\right\} $ is an orthogonal
basis for the vector space~$V(Q)$ of all functions on $Q$ that are constant
on each sub-cube $R\in {\mathcal{H}}(Q)$;

\item if $h_{Q}^{\epsilon }\not= 0$ then $\Vert h_{Q}^{\epsilon }\Vert
_{L^{p}(X,\mu )}\approx \mu (Q)^{\frac{1}{p}-\frac{1}{2}}\quad 
\text{for}~1\leq p\leq \infty ;$

\item $\Vert h_{Q}^{\epsilon }\Vert _{L^{1}(X,\mu )}\cdot \Vert
h_{Q}^{\epsilon }\Vert _{L^{\infty }(X,\mu )}\approx 1$.
\end{enumerate}
\end{theorem}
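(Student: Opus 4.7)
The plan is to produce the Haar functions by an explicit Gram--Schmidt construction on the indicator functions of the children of each cube, as carried out in \cite{KLPW}, and then verify (1)--(8) directly from the resulting closed form. Fix $Q\in\mathcal{D}$ with children $\mathcal{H}(Q)=\{R_1,\dots,R_{M_Q}\}$ in some fixed ordering (with children of zero $\mu$-measure placed at the end). Set $\mathcal{M}_Q^\epsilon:=R_1\cup\cdots\cup R_\epsilon$, and define, for $1\leq\epsilon\leq M_Q-1$,
\begin{equation*}
 h_Q^\epsilon \;:=\; c_Q^\epsilon\left(\frac{1_{R_{\epsilon+1}}}{\mu(R_{\epsilon+1})}-\frac{1_{\mathcal{M}_Q^\epsilon}}{\mu(\mathcal{M}_Q^\epsilon)}\right),\qquad (c_Q^\epsilon)^2:=\frac{\mu(R_{\epsilon+1})\,\mu(\mathcal{M}_Q^\epsilon)}{\mu(R_{\epsilon+1})+\mu(\mathcal{M}_Q^\epsilon)},
\end{equation*}
with the convention $h_Q^\epsilon\equiv 0$ if either denominator vanishes. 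This is precisely the $(\epsilon+1)$-st Gram--Schmidt output (after the initial $\mu(Q)^{-1/2}1_Q$) applied to $1_{R_1},\dots,1_{R_{M_Q}}$ in $L^2(\mu)$.

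Properties (1)--(3) are immediate from the formula. Cancellation (4) is the one-line computation $\int h_Q^\epsilon\,d\mu = c_Q^\epsilon(1-1)=0$. Orthogonality (5) follows because whenever $\epsilon'<\epsilon$, the function $h_Q^{\epsilon'}$ is constant on $R_{\epsilon+1}$ and on $\mathcal{M}_Q^\epsilon$ separately (it is in fact constant on every $R_i$ with $i\leq\epsilon+1$), so the pairing with $h_Q^\epsilon$ collapses to a multiple of the cancellation from (4). Property (6) then follows by a dimension count in $L^2(Q,\mu|_Q)$: the quotient space $V(Q)$ has dimension equal to the number of children with positive measure, the exhibited collection lies in $V(Q)$, is orthogonal, and contains exactly that many nonzero vectors.

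For (7), the function $h_Q^\epsilon$ assumes only two nonzero values, $c_Q^\epsilon/\mu(R_{\epsilon+1})$ on $R_{\epsilon+1}$ and $-c_Q^\epsilon/\mu(\mathcal{M}_Q^\epsilon)$ on $\mathcal{M}_Q^\epsilon$. Writing $A:=\mu(R_{\epsilon+1})$ and $B:=\mu(\mathcal{M}_Q^\epsilon)$, direct computation yields
\begin{equation*}
 \|h_Q^\epsilon\|_{L^p(\mu)}^p=(c_Q^\epsilon)^p\bigl(A^{1-p}+B^{1-p}\bigr),\qquad (c_Q^\epsilon)^2=\frac{AB}{A+B},
\end{equation*}
and elementary manipulation gives $\|h_Q^\epsilon\|_{L^p(\mu)}\approx(A+B)^{1/p-1/2}$, with implicit constants depending only on the ratio $A/B$. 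The main obstacle is therefore to establish $A\approx B\approx\mu(Q)$ whenever $h_Q^\epsilon\neq 0$, so that both this ratio is bounded and $A+B\approx\mu(Q)$. This is where the geometry of the dyadic system enters: by Theorem \ref{t:randomgrid}(3) each child $R_i\in\mathcal{D}_{k+1}$ of $Q\in\mathcal{D}_k$ contains the ball $B(z_{R_i},c_1\delta^{k+1})$ while $Q\subset B(z_Q,C_1\delta^k)$, and since the two radii differ by only the fixed multiplicative factor $c_1\delta/C_1$, the doubling condition (\ref{doubling condition}) yields $\mu(R_i)\gtrsim\mu(Q)$ for every child of positive measure; combined with the trivial upper bound $\mu(R_i)\leq\mu(Q)$, this completes (7). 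Finally, property (8) is the product of the $p=1$ and $p=\infty$ cases of (7).
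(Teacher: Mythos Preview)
Your construction and verification are correct and follow precisely the Gram--Schmidt approach of \cite{KLPW}, which is what the paper is invoking; the paper itself does not give a proof of this theorem but simply recalls it from that reference. Your use of the containment \eqref{eq:contain} together with doubling to obtain $\mu(R_i)\approx\mu(Q)$ for every child, and hence the uniform bounds on $A/B$ needed for (7)--(8), is exactly the point where the ambient homogeneous-type structure enters, and your argument handles it cleanly.
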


We denote $h_{Q}^{0}:=\mu (Q)^{-1/2}1_{Q}$, which is a non-cancellative Haar
function. Moreover, the martingale associated with the Haar functions are as
follows: for $Q\in \mathcal{D}_{k}$, 
\begin{equation*}
\mathbf{E}_{Q}f:=\langle f,h_{Q}^{0}\rangle _{\mu }h_{Q}^{0}\quad \mathrm{and}%
\quad \mathbb{D}_{Q}f:=\sum_{\epsilon =1}^{M_{Q}-1}\mathbb{D}_{Q}^{\epsilon
}f,
\end{equation*}%
where $\mathbb{D}_{Q}^{\epsilon }f:=\langle f,h_{Q}^{\epsilon }\rangle_{\mu}
h_{Q}^{\epsilon }$ is the martingale operator associated with the $\epsilon $%
-th subcube of $Q$. Also we have 
\begin{equation*}
\mathbf{E}_{k}f=\sum_{Q\in \mathcal{D}_{k}}\mathbf{E}_{Q}f\quad \mathrm{and}%
\quad \mathbb{D}_{k}f=\mathbf{E}_{k+1}f-\mathbf{E}_{k}f.
\end{equation*}%
Hence, based on the construction of Haar system $\{h_{Q}^{\epsilon }\}$ in 
\cite{KLPW} we obtain that for each $R\in \mathcal{D}$ and $\eta=1,\ldots, M_R-1$, 
\begin{equation}
\label{e:tagged}
\sum_{Q:\ R\subset Q}\sum_{\epsilon =1}^{M_{Q}-1}\langle f,h_{Q}^{\epsilon
}\rangle _{\mu }h_{Q}^{\epsilon }h_{R}^{\eta }=\sum_{Q:\ R\subset Q}\mathbb{D%
}_{Q}f\cdot h_{R}^{\eta }=\mathbf{E}_{R}f\cdot h_{R}^{\eta }=\langle
f,h_{R}^{0}\rangle _{\mu }h_{R}^{0}h_{R}^{\eta }.
\end{equation}

\subsection{The Carleson Embedding Theorem in Spaces of Homogeneous Type}

Now we will describe the familiar Carleson Embedding theorem, which will be
crucial in the control of certain paraproduct terms.

\begin{theorem}
\label{t:2.5} Fix a weight $u$ and consider nonnegative constants $%
\{a_{Q}:Q\in \mathcal{D}\}$. The following two inequalities are equivalent: 
\begin{align*}
& \sum_{Q\in \mathcal{D}}a_{Q}|\mathbf{E}_{Q}^{u}f|^{2}\leq c\Vert
f\Vert _{L^{2}(u)}^{2}, {\quad\rm where\ \  } \mathbf{E}_{Q}^{u}f:=\langle f,h_{Q}^{0}\rangle _{u }h_{Q}^{0}; \\
& \sum_{Q\in \mathcal{D}:Q\subset S}a_{Q}\leq Cu(S).
\end{align*}%
Taking $c$ and $C$ to be the best constants in these inequalities, we
have $c\approx C$.
\end{theorem}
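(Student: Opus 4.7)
My plan is to prove both implications, interpreting $\mathbf{E}_Q^u f$ as the (constant, scalar) $u$-average $\langle f\rangle_Q^u := u(Q)^{-1}\int_Q f\,du$, which is the standard reading of the Carleson embedding.

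The easy direction $c\gtrsim C$ I would obtain by direct testing: fix $S\in \mathcal{D}$ and plug $f = \mathbf{1}_S$ into the first inequality. Then $\langle f\rangle_Q^u = 1$ for every $Q\subset S$ while $\|f\|_{L^2(u)}^2 = u(S)$, so restricting the sum over $Q$ to cubes contained in $S$ immediately yields $\sum_{Q\subset S} a_Q \leq c\,u(S)$, i.e.\ $C\leq c$.

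For the substantive direction $c\lesssim C$, I plan to combine a layer-cake decomposition with the $L^2(u)$-boundedness of the dyadic $u$-maximal operator
\[
M^u f(x) := \sup_{Q\in \mathcal{D},\, Q\ni x}\, \frac{1}{u(Q)}\left|\int_Q f\,du\right|.
\]
Assume $f\geq 0$ and write $\alpha_Q := \langle f\rangle_Q^u$. I first expand by layer-cake
\[
\sum_{Q\in \mathcal{D}} a_Q\,\alpha_Q^2 = \int_0^\infty 2\lambda \sum_{Q:\, \alpha_Q > \lambda} a_Q\, d\lambda.
\]
For each $\lambda>0$ let $\{S_j(\lambda)\}$ be the maximal dyadic cubes with $\alpha_{S_j(\lambda)} > \lambda$; these are pairwise disjoint, every $Q$ with $\alpha_Q > \lambda$ lies in exactly one of them, and $\bigcup_j S_j(\lambda)\subseteq \{M^u f > \lambda\}$. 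Applying hypothesis (2) to each stopping cube and summing,
\[
\sum_{Q:\, \alpha_Q > \lambda} a_Q \leq \sum_j \sum_{Q\subset S_j(\lambda)} a_Q \leq C \sum_j u(S_j(\lambda)) \leq C\,u(\{M^u f > \lambda\}).
\]
Integrating in $\lambda$ and using $\|M^u f\|_{L^2(u)}\lesssim \|f\|_{L^2(u)}$ then delivers $\sum_Q a_Q\,\alpha_Q^2 \lesssim C\,\|f\|_{L^2(u)}^2$.

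The only ingredient I need beyond this bookkeeping is the $L^2(u)$-boundedness of $M^u$. Since $\{\mathbf{E}_k^u f\}_{k\in\mathbb{Z}}$ is a martingale for the filtration generated by the dyadic grid of Theorem \ref{t:randomgrid}, this reduces to Doob's weak-$(1,1)$ maximal inequality together with Marcinkiewicz interpolation; no doubling of $u$ or geometric property of $(X,d,\mu)$ beyond the nested/partition features of $\mathcal{D}$ enters. I anticipate no genuine obstacle here—the argument is the classical Carleson embedding theorem transplanted verbatim to the dyadic martingale setting on $(X,d,\mu)$.
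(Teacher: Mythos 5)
The paper does not actually prove Theorem~\ref{t:2.5}; it states it as ``the familiar Carleson Embedding theorem'' and uses it as a known black box, so there is no argument in the source to compare against. Your proof is the classical one and it is correct. A few remarks. Your reading of $\mathbf{E}_Q^u f$ as the scalar $u$-average $\langle f\rangle_Q^u = u(Q)^{-1}\int_Q f\,du$ is the right one and is consistent with how the theorem is invoked later (e.g.\ in Lemma~\ref{l:7.1}, where $\mathbf{E}_Q^u$ is squared as a scalar and multiplied by $\|L_Q^v T(1_{S'}u)\|_{L^2(v)}^2$ playing the role of $a_Q$). The easy direction is a clean test-function argument; the hard direction via layer-cake $+$ maximal dyadic stopping cubes $+$ Doob's $L^2$ bound for the dyadic $u$-martingale maximal operator is exactly the standard route, and you are right that no doubling of $u$ or geometric information about $(X,d,\mu)$ is needed beyond the nesting and partition properties of $\mathcal{D}$.

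The only point worth flagging is the existence of the maximal stopping cubes $S_j(\lambda)$: for the equality/inclusion $\{Q:\alpha_Q>\lambda\}\subset\bigcup_j\{Q:Q\subset S_j(\lambda)\}$ one needs that every cube $Q$ with $\alpha_Q>\lambda$ lies under \emph{some} maximal such cube, which is not automatic for arbitrary $f\in L^2(u)$ if $u(X)<\infty$. The standard fix is either to note $\alpha_Q\leq u(Q)^{-1/2}\|f\|_{L^2(u)}\to 0$ when $u(Q)\to\infty$, or to prove the inequality first over a finite sub-grid and pass to the limit, or---as is actually the situation in this paper---to observe that the coefficients $a_Q$ are always supported on cubes inside a fixed top cube $Q_0$, so the maximal stopping cube trivially exists. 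You should say one word about this, but it does not affect the correctness of the argument.
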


\section{First Reduction in the Proof of the Two-Weight Inequality}
\label{s:MainResult}

We now begin to prove the two weight inequality in our setting.   In this section we reduce to showing that it suffices to prove Theorem \ref{main theorem} under the hypothesis that $f$ and $g$ are `good' functions (as explained below).

Let $f\in L^{2}(u)$ and $g\in L^{2}(v)$ be two functions. Without loss of generality,
we can assume that these two functions have compact support. Moreover, it is
sufficient to assume that $f$ and $g$ are supported on a common (large)
cube $Q_{0}$, see for example \cite{Vol}. From Theorem \ref{thm:convergence} we have 
\begin{equation*}
f(x)=\sum_{Q\in \mathcal{D}}\sum_{\epsilon =1}^{M_{Q}-1}\langle
f,h_{Q}^{\epsilon }\rangle _{u}h_{Q}^{\epsilon }(x).
\end{equation*}
We write this sum in two parts as follows: 
\begin{equation*}
f(x)=\sum_{Q\subset Q_{0}}\sum_{\epsilon =1}^{M_{Q}-1}\langle
f,h_{Q}^{\epsilon }\rangle _{u}h_{Q}^{\epsilon }(x)+\sum_{Q:Q_{0}\subset
Q}\sum_{\epsilon =1}^{M_{Q}-1}\langle f,h_{Q}^{\epsilon }\rangle
_{u}h_{Q}^{\epsilon }(x).
\end{equation*}%
Based on Theorem \ref{prop:HaarFuncProp} and \eqref{e:tagged} we have that 
\begin{eqnarray*}
\mathbf{E}_{Q_{0}}f\cdot h_{Q_{0}}^{\eta } &=&\langle f,h_{Q_{0}}^{0}\rangle
_{u}h_{Q_{0}}^{0}h_{Q_{0}}^{\eta }=\sum_{Q:Q_{0}\subset Q}\sum_{\epsilon
=1}^{M_{Q}-1}\langle f,h_{Q}^{\epsilon }\rangle _{u}h_{Q}^{\epsilon }, \\
f &=&\mathbf{E}_{Q_{0}}f+\sum_{Q\subset Q_{0}}\sum_{\epsilon
=1}^{M_{Q}-1}\langle f,h_{Q}^{\epsilon }\rangle _{u}h_{Q}^{\epsilon
}=:f_{1}+f_{2}.
\end{eqnarray*}%
Similarly we write for $g\in L^{2}(v)$: 
\begin{equation*}
g=\mathbf{E}_{Q_{0}}g+\sum\limits_{Q\subset Q_{0}}\sum\limits_{\epsilon
=1}^{M_{Q}-1}\langle g,h_{Q}^{\epsilon }\rangle _{v}h_{Q}^{\epsilon
}=:g_{1}+g_{2}.
\end{equation*}%
Here we have $\Vert f\Vert _{L^{2}(u)}^{2}=\Vert f_{1}\Vert
_{L^{2}(u)}^{2}+\Vert f_{2}\Vert _{L^{2}(u)}^{2}$; similar formulas hold for
the function $g$.

Let $T$ be a Calder\'{o}n--Zygmund Operator on $(X,d,\mu )$. Given these
decompositions of $f$ and $g$, let us begin the proof by looking at their
inner product 
\begin{equation*}
\langle T(uf),g\rangle _{v}=\langle T(uf_{1}),g_{1}\rangle _{v}+\langle
T(uf_{1}),g_{2}\rangle _{v}+\langle T(uf_{2}),g_{1}\rangle _{v}+\langle
T(uf_{2}),g_{2}\rangle _{v}:=I_{1}+I_{2}+I_{3}+I_{4}.
\end{equation*}%
It is enough to obtain good estimates on each of the $I_{j}$ above. The first three terms are easy to control just using the testing condition assumed on the
operator $T$, the last term will then require substantial analysis.

We now show how to control $I_{1}$, $I_{2}$ and $I_{3}$ just using the
testing condition. First observe that 
\begin{equation*}
I_{1}=\langle T(uf_{1}),g_{1}\rangle _{v}=\frac{\int_{Q_{0}}fdu\cdot
\int_{Q_{0}}gdv}{u(Q_{0})\cdot v(Q_{0})}\langle T(u1_{Q_{0}}),1_{Q_{0}}\rangle
_{v}.
\end{equation*}%
By Cauchy-Schwarz, applied to the function $f$ and the function $g$, and in
the inner product, and then using the testing conditions assumed on the
operator $T$ we have: 
\begin{equation*}
|I_{1}|\leq \mathcal{T}\Vert f\Vert _{L^{2}(u)}\Vert g\Vert _{L^{2}(v)}.
\end{equation*}%
The terms $I_{2}$ and $I_{3}$ are symmetric 
\begin{equation*}
I_{2}=\langle T(uf_{1}),g_{2}\rangle _{v}=\frac{\int_{Q_{0}}fdu}{u(Q_{0})}%
\langle T(u1_{Q_{0}}),g_{2}\rangle _{v}.
\end{equation*}%
Using Cauchy-Schwarz, the testing conditions and the fact that $\Vert
g_{2}\Vert _{L^{2}(v)}\leq \Vert g\Vert _{L^{2}(v)}$ we get the following: 
\begin{equation*}
|I_{2}|\leq \mathcal{T}\Vert f\Vert _{L^{2}(u)}\Vert g\Vert _{L^{2}(v)}.
\end{equation*}%
An identical argument works for $I_{3}$.

By the above it suffices to prove 
\begin{equation*}
|\langle T(uf),g\rangle _{v}|\lesssim \Vert f\Vert _{L^{2}(u)}\Vert g\Vert
_{L^{2}(v)}
\end{equation*}%
when $f$ and $g$ have compact support in $Q_{0}$ and $\int_{Q_{0}}fdu=0$ and $%
\int_{Q_{0}}gdv=0$. We will now decompose the inner product $\langle
T(uf),g\rangle _{v}$ using good-bad decomposition.

\begin{remark}
In order to use surgery to remove weak boundedness, we will need to work in
the world of two independent systems of random grids.
\end{remark}

\subsection{The Good and Bad Parts of Functions}

We use the good-bad decomposition of test functions to simplify the proof even
further. Fix a number $\epsilon $, $0<\epsilon <1$. Later the choice of $%
\epsilon $ will be dictated by the Calder\'{o}n--Zygmund properties of the
operator $T$ and the underlying measure $\mu $. Also fix a sufficiently
large integer $r$. The choice of $r$ will be made in this section. Finally,
we consider two grids $\mathcal{D}=\left\{ Q_{\alpha }^{k}\left( \omega
\right) \right\} _{k,\alpha }$ and $\mathcal{D}^{\prime }=\left\{ Q_{\alpha
}^{k}\left( \omega ^{\prime }\right) \right\} _{k,\alpha }$ for $\omega
,\omega ^{\prime }\in \Omega $.

\begin{definition}
\label{d:3.1} Take a dyadic cube $Q\in \mathcal{D}$. We say that $Q$ is $r$-%
\textit{good} in $\mathcal{D}^{\prime }$ for an integer $r$, if for every cube $Q_{1}\in \mathcal{D}^{\prime }$ such that if $\delta ^{k}\leq \delta
^{r}\delta ^{n}$ with $k\geq n+r$, then either 
\begin{equation*}
\func{dist}(Q,Q_{1})\geq \delta ^{k\epsilon }\delta ^{n(1-\epsilon )}\text{ or 
}\func{dist}(Q,X\setminus Q_{1})\geq \delta ^{k\epsilon }\delta ^{n(1-\epsilon
)}.
\end{equation*}%
Above we are letting $l(Q):= \delta ^{k}$ and $l(Q_1):= \delta^{n}$%
. If $Q$ is not $r$-good we call it $r$-\textit{bad}.
\end{definition}

We can now decompose $f$ into good and bad parts as below. 
\begin{eqnarray*}
f &=&f_{good}+f_{bad} \\
f_{bad}:= &&\sum_{Q\in \mathcal{D},Q\text{ is bad}}\Delta _{Q}f.
\end{eqnarray*}

\begin{theorem}{\cite[Theorem 17.1]{Vol}}
\label{t:VolbergBad} There holds on $(X,d,\mu )$ for $f\in L^{2}(u)$ 
\begin{equation*}
\mathbf{E}(\Vert f_{bad}\Vert _{L^{2}(u)})\leq \varepsilon (r)\Vert f\Vert
_{L^{2}(u)}
\end{equation*}%
where $\varepsilon (r)\rightarrow 0$ as $r\rightarrow \infty $. A similar
estimate holds for $g_{bad}\in L^{2}(v)$.
\end{theorem}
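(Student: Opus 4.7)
The plan is to follow the standard Nazarov--Treil--Volberg probabilistic good/bad argument, with the surgery estimate \eqref{surgery} in Theorem \ref{t:randomgrid} playing the role of the Lebesgue boundary estimate that drives the Euclidean case. The key conceptual point is that, although the Haar decomposition $f=\sum_{Q\in \mathcal{D}(\omega)}\Delta_Q f$ is determined by the first grid $\mathcal{D}(\omega)$, the label ``bad'' is defined relative to an independent second grid $\mathcal{D}'=\mathcal{D}(\omega')$, so we may freeze $\omega$ and average only over $\omega'$.

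First I would invoke $L^2(u)$-orthogonality of the martingale differences to write
\[
\|f_{bad}\|_{L^2(u)}^2=\sum_{Q\in\mathcal{D}(\omega)}\mathbf{1}_{\{Q\text{ bad in }\mathcal{D}'\}}\,\|\Delta_Q f\|_{L^2(u)}^2,
\]
take expectation in $\omega'$, swap sum and expectation by Fubini, and reduce the claim to a uniform bound $\mathbb{P}_{\omega'}(Q\text{ bad})\le \varepsilon(r)^2$. Jensen's inequality then passes from $\mathbf{E}\|f_{bad}\|^2$ to $\mathbf{E}\|f_{bad}\|$ at the cost of a square root, giving exactly the stated bound with $\varepsilon(r)\to 0$. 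The estimate for $g_{bad}\in L^2(v)$ follows by the identical argument with $v$ in place of $u$.

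The main obstacle, and the only nontrivial step, is the probability estimate for a fixed $Q$ with $l(Q)=\delta^k$. By Definition \ref{d:3.1}, $Q$ is bad precisely when there exists $Q_1\in\mathcal{D}'$ at some scale $\delta^n$ with $n\le k-r$ such that $\operatorname{dist}(Q,Q_1)<\delta^{k\epsilon}\delta^{n(1-\epsilon)}$ and $\operatorname{dist}(Q,X\setminus Q_1)<\delta^{k\epsilon}\delta^{n(1-\epsilon)}$; this forces $Q$ to meet the $\delta^{k\epsilon}\delta^{n(1-\epsilon)}$-neighborhood of $\partial Q_1$. The delicate point is to convert this into a pointwise statement at the center $z_Q$ of $Q$ so that \eqref{surgery} can be applied. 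Since $\operatorname{diam}(Q)\lesssim \delta^k$ and $k-n\ge r$, the quasi-triangle inequality (with constant $A_0$) lets us absorb $\operatorname{diam}(Q)$ into the neighborhood thickness, yielding that $z_Q\in \bigcup_\alpha \partial_{\tau_n\delta^n}Q_\alpha^n(\omega')$ with $\tau_n:=C\delta^{(k-n)\epsilon}$. Applying \eqref{surgery} gives $\mathbb{P}_{\omega'}(\cdots)\le C_2\tau_n^\eta\lesssim \delta^{(k-n)\epsilon\eta}$, and summing the resulting geometric series over $m:=k-n\ge r$ yields
\[
\mathbb{P}_{\omega'}(Q\text{ bad})\;\lesssim\;\sum_{m\ge r}\delta^{m\epsilon\eta}\;\lesssim\;\delta^{r\epsilon\eta},
\]
so setting $\varepsilon(r):=C\,\delta^{r\epsilon\eta/2}$ completes the argument. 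The care required is entirely in verifying that $\delta^k$ is negligible compared with $\delta^{k\epsilon}\delta^{n(1-\epsilon)}$ for $k-n\ge r$ and $0<\epsilon<1$, which is the single place where the hypothesis that $r$ is large is really used.
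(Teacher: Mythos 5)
Your argument is correct and is essentially the standard Nazarov--Treil--Volberg probabilistic good/bad estimate: the paper itself does not prove this statement but cites \cite[Theorem 17.1]{Vol}, and your proof — orthogonality of the Haar differences, Fubini to reduce to a uniform bound on $\mathbb{P}_{\omega'}(Q\ \text{bad})$, the thin-boundary probability estimate \eqref{surgery} applied at each coarser scale, and a geometric sum over $k-n\geq r$ — is exactly that argument transported to homogeneous spaces via Theorem \ref{t:randomgrid}. The one point worth making explicit in a write-up is the inequality $\delta^k\le\delta^{k\epsilon}\delta^{n(1-\epsilon)}$ for $k\ge n+r$, which you correctly identify as what lets $\operatorname{diam}(Q)$ be absorbed (with a loss only of the quasi-triangle constant $A_0$) so that the surgery estimate can be applied at the single reference point $z_Q$.
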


\begin{proposition}
\label{prop:Reduction to Good} Consider the decompositions of $f$ and $g$
into bad and good parts on $(X,d,\mu )$, where the support cubes of the Haar
projections of $f$ are good with respect to $\mathcal{G}$, and the support
cubes of the Haar projections of $g$ are good with respect to $\mathcal{D}$.
Let $u$ and $v$ be pairs of weights and suppose there holds uniformly over
all dyadic grids $\mathcal{D}$ and $\mathcal{G}$ for some finite constant $C$
\begin{equation}
\mathbf{E}(|\langle T(uf_{good}),g_{good}\rangle _{v}|)\leq C\Vert f\Vert
_{L^{2}(u)}\Vert g\Vert _{L^{2}(v)} ,  \label{e:RedtoGood}
\end{equation}%
where $\mathbf{E}$ refers to expectation over the product probability space $%
\Omega \times \Omega $.  Then 
\begin{equation*}
|\langle T(uf),g\rangle _{v}|\leq 2C\Vert f\Vert _{L^{2}(u)}\Vert g\Vert
_{L^{2}(v)},
\end{equation*}%
that is 
\begin{equation*}
\Vert T\Vert _{L^{2}(u)\rightarrow L^{2}(v)}\leq 2C.
\end{equation*}
\end{proposition}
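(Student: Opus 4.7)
This is the standard Nazarov--Treil--Volberg reduction, the core trick being to randomize over two independent grids and absorb the bad contributions into the operator norm. Write $\mathcal{N} := \Vert T\Vert_{L^{2}(u)\to L^{2}(v)}$, which I shall assume a priori finite (this is the one subtle point -- see below). The pairing $\langle T(uf), g\rangle_{v}$ is a deterministic quantity with no dependence on the random choices $\omega, \omega'$, so it equals its own expectation $\mathbf{E}$ over $\Omega \times \Omega$. Decompose $f = f_{good} + f_{bad}$ using the Haar expansion of $f$ in one grid (with goodness tested against $\mathcal{G}$), and analogously $g = g_{good} + g_{bad}$ using the other grid (with goodness tested against $\mathcal{D}$), as in the hypothesis of the proposition.

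By bilinearity,
\[
\langle T(uf), g\rangle_{v} = \mathbf{E}\langle T(uf_{good}), g_{good}\rangle_{v} + \mathbf{E}\langle T(uf_{good}), g_{bad}\rangle_{v} + \mathbf{E}\langle T(uf_{bad}), g_{good}\rangle_{v} + \mathbf{E}\langle T(uf_{bad}), g_{bad}\rangle_{v}.
\]
The first term is controlled by $C\Vert f\Vert_{L^{2}(u)}\Vert g\Vert_{L^{2}(v)}$ directly from hypothesis \eqref{e:RedtoGood}. For the remaining three cross terms I apply Cauchy--Schwarz in the $T$-pairing together with the operator-norm bound, then use Theorem \ref{t:VolbergBad} and $\Vert f_{good}\Vert_{L^{2}(u)} \leq \Vert f\Vert_{L^{2}(u)}$ (and similarly for $g$). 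For instance,
\[
\mathbf{E}|\langle T(uf_{good}), g_{bad}\rangle_{v}| \leq \mathcal{N}\, \Vert f\Vert_{L^{2}(u)}\, \mathbf{E}\Vert g_{bad}\Vert_{L^{2}(v)} \leq \mathcal{N}\,\varepsilon(r)\, \Vert f\Vert_{L^{2}(u)}\Vert g\Vert_{L^{2}(v)},
\]
and the other two cross terms are handled symmetrically. Combining yields
\[
|\langle T(uf), g\rangle_{v}| \leq \bigl(C + 3\mathcal{N}\,\varepsilon(r)\bigr)\Vert f\Vert_{L^{2}(u)}\Vert g\Vert_{L^{2}(v)}.
\]

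Taking the supremum over $f,g$ with unit norms gives $\mathcal{N} \leq C + 3\mathcal{N}\,\varepsilon(r)$. Choosing the integer $r$ large enough that $3\varepsilon(r) \leq 1/2$, which is possible by Theorem \ref{t:VolbergBad}, I absorb to deduce $\mathcal{N} \leq 2C$, which immediately yields the stated inequality. The main obstacle is justifying the a priori finiteness of $\mathcal{N}$ so that the absorption step is legal; this is handled in the usual way by first proving the bound for a truncated or localized version of $T$ (for which $\mathcal{N}$ is trivially finite), obtaining a constant $2C$ that is independent of the truncation parameters, and then passing to the limit. Once that is in hand, the reduction to the good-good pairing is complete, and all subsequent work in the paper can be carried out assuming both $f$ and $g$ are sums of Haar projections over $r$-good cubes.
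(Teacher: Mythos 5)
Your proof is correct and follows the same Nazarov--Treil--Volberg absorption strategy as the paper. The only cosmetic difference is the decomposition: you split the pairing into all four pieces (good-good, good-bad, bad-good, bad-bad), yielding $\mathcal{N}\leq C+3\mathcal{N}\varepsilon(r)$, whereas the paper groups $\langle T(uf_{bad}),g\rangle_{v}$ as a single term and so gets only two bad contributions, $\mathcal{N}\leq C+2\mathcal{N}\varepsilon(r)$; either way the absorption works once $\varepsilon(r)$ is small enough. Your version of the absorption step (take the supremum directly over unit-norm $f,g$) is in fact a little cleaner than the paper's, which picks near-extremizers and introduces a spurious factor of $\tfrac12$. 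You are also right to flag the a priori finiteness of $\mathcal{N}$ as the point that needs care before absorbing; the paper does not spell this out, but the remedy you describe (prove the bound uniformly for truncations, then pass to the limit) is exactly the standard one. In short: correct, and essentially the same argument.
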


\begin{proof}
Note that 
\begin{equation*}
\langle T(uf),g\rangle _{v}=\langle T(uf_{good}),g_{good}\rangle _{v}+\langle
T(uf_{good}),g_{bad}\rangle _{v}+\langle T(uf_{bad}),g\rangle _{v}.
\end{equation*}%
So 
\begin{equation*}
\mathbf{E}(|\langle T(uf),g\rangle _{v}|)\leq \mathbf{E}(|\langle
T(uf_{good}),g_{good}\rangle _{v}|)+\mathbf{E}(|\langle
T(uf_{good}),g_{bad}\rangle _{v}|)+\mathbf{E}(|\langle T(uf_{bad}),g\rangle _{v}|).
\end{equation*}

\noindent Using \eqref{e:RedtoGood} and by Theorem \ref{t:VolbergBad} we
have 
\begin{equation*}
|\langle T(uf),g\rangle _{v}|\leq C\Vert f\Vert _{L^{2}(u)}\Vert g\Vert
_{L^{2}(v)}+2\Vert T\Vert _{L^{2}(u)\rightarrow L^{2}(v)}\varepsilon
(r)\Vert f\Vert _{L^{2}(u)}\Vert g\Vert _{L^{2}(v)}.
\end{equation*}%
Notice that $\Vert Tf\Vert _{L^{2}(u)\rightarrow L^{2}(v)}=\sup |\langle
Tf,g\rangle _{v}|$. Choose $f$, $g$ and $r$ sufficiently large such that 
\begin{equation}
|\langle T(uf),g\rangle _{v}|\geq \frac{1}{2}\Vert T\Vert _{L^{2}(u)\rightarrow
L^{2}(v)}\Vert f\Vert _{L^{2}(u)}\Vert g\Vert _{L^{2}(v)}\text{ and }%
\varepsilon (r)<\frac{1}{8}.  \label{e:3.3}
\end{equation}%
Then by absorbing the second term in \eqref{e:3.3} to left hand side, we get 
\begin{equation*}
\Vert T\Vert _{L^{2}(u)\rightarrow L^{2}(v)}\leq 2C.
\end{equation*}
\end{proof}

\begin{definition}
\label{d:newdef}
We fix $r>0$ with $\epsilon(r)<\frac{1}{8}$ and throughout the paper and abbreviate $r$-good as simply good.
\end{definition}

The upshot of the above is that if we manage to prove that for all $f\in
L^{2}(u)$ and $g\in L^{2}(v)$ we have 
\begin{equation*}
\mathbf{E}(|\langle T(uf_{good}),g_{good}\rangle _{v}|)\leq C\Vert f\Vert
_{L^{2}(u)}\Vert g\Vert _{L^{2}(v)}
\end{equation*}%
then we obtain \eqref{e:RedtoGood}. The remainder of the paper is devoted to
proving \eqref{e:RedtoGood}. We remind the reader that $\mathbf{E}$ refers
to expectation taken over the product probability space $\Omega \times
\Omega $.  In fact, we will prove that \eqref{e:RedtoGood} holds for all dyadic grids and so that statement regarding the expectation will then follow.

\section{Main Decomposition}
\label{s5}

Fix a cube $Q_{0}$ for the rest of the paper; this is the support of the
functions $f$ and $g$. Our goal is to demonstrate \eqref{e:RedtoGood}. This
will require several additional reductions.



Throughout the rest of this paper we assume the $\mathcal{A}_{2}$ conditions, the testing conditions and the pivotal conditions. 

\subsection{Global to Local Reduction}

We will now try to control the bilinear form:

\begin{eqnarray*}
\langle T(uf),g\rangle _{v} &=&\sum\limits_{Q\in \mathcal{D},S\in \mathcal{G}%
}\sum\limits_{\epsilon =1}^{M_{Q}-1}\sum\limits_{k=1}^{M_{S}-1}\langle
f,h_{Q}^{\epsilon }\rangle _{u}\langle T(uh_{Q}^{\epsilon }),h_{S}^{k}\rangle
_{v}\langle g,h_{S}^{k}\rangle _{v} \\
&=&\sum\limits_{\substack{ Q\in \mathcal{D},S\in \mathcal{G}  \\ l(Q)\geq
l(S) }}\sum\limits_{\epsilon =1}^{M_{Q}-1}\sum\limits_{k=1}^{M_{S}-1}\langle
f,h_{Q}^{\epsilon }\rangle _{u}\langle T(uh_{Q}^{\epsilon }),h_{S}^{k}\rangle
_{v}\langle g,h_{S}^{k}\rangle _{v} \\
&&+\sum\limits_{\substack{ Q\in \mathcal{D},S\in \mathcal{G}  \\ l(S)>l(Q)}}%
\sum\limits_{\epsilon =1}^{M_{Q}-1}\sum\limits_{k=1}^{M_{S}-1}\langle
f,h_{Q}^{\epsilon }\rangle _{u}\langle T(uh_{Q}^{\epsilon }),h_{S}^{k}\rangle
_{v}\langle g,h_{S}^{k}\rangle _{v} \\
&=: &A_{1}^{1}+A_{2}^{1}.
\end{eqnarray*}%
Set $\mathcal{A}_{1}^{1}:=\{(Q,S)\in \mathcal{D}\times \mathcal{G}:l(Q)\geq
l(S)\}$ and $\mathcal{A}_{2}^{1}:=\{(Q,S)\in \mathcal{D}\times \mathcal{G}%
:l(S)>l(Q)\}$ and 
\begin{equation*}
A_{j}^{i}:=\sum\limits_{(Q,S)\in \mathcal{A}_{j}^{i}}\sum\limits_{\epsilon
=1}^{M_{Q}-1}\sum\limits_{k=1}^{M_{S}-1}\langle f,h_{Q}^{\epsilon }\rangle
_{u}\langle T(uh_{Q}^{\epsilon }),h_{S}^{k}\rangle _{v}\langle
g,h_{S}^{k}\rangle _{v}.
\end{equation*}%
Here $A_{2}^{1}$ is complementary to the sum $A_{1}^{1}$. The sums are
estimated symmetrically. Hence it is enough to prove \eqref{e:RedtoGood} for 
$A_{1}^{1}$. We will further decompose this term into a number of other
bilinear forms $A_{j}^{i}$. The superscript $i$ denotes the generation and
subscript $j$ counts the number of decompositions. To help understand these
decompositions we can look at the flow chart where all the terms are listed.

\begin{center}    
\begin{tikzpicture}
\node[draw,rounded corners] (1) at (0,16){$\langle T(uf), g\rangle_v$};
\node[draw,rounded corners] (2) at (3,15){$A_1^1$};
\node[draw,rounded corners] (3) at (-3,15){$A_1^2$};
\node[draw,rounded corners] (4) at (0,13){$A_3^2$};
\node[draw,rounded corners] (5) at (3,13){$A_2^1$};
\node[draw,rounded corners] (6) at (6,13){$A_2^2$};
\node[draw,rounded corners] (7) at (-2,11){$A_2^3$};
\node[draw,rounded corners] (8) at (2,11){$A_1^3$};
\node[draw,rounded corners] (9) at (-4,9){$A_1^4$};
\node[draw,rounded corners] (10) at (-2,9){$A_2^4$};
\node[draw,rounded corners] (11) at (0,9){$A_3^4$};
\node[draw,rounded corners] (12) at (-4,7){$A_1^5$};
\node[draw,rounded corners] (13) at (0,7){$A_2^5$};
\node[draw,rounded corners] (14) at (-2,5){$A_1^6$};
\node[draw,rounded corners] (15) at (0,5){$A_2^6$};
\node[draw,rounded corners] (16) at (2,5){$A_3^6$};
\node[draw,rounded corners] (17) at (-0.5,3){$A_1^7$};
\node[draw,rounded corners] (18) at (2,3){$A_2^7$};
\node[draw,rounded corners] (19) at (-4,3){$A_3^7$};
\node[draw,rounded corners] (20) at (-2,3){$A_4^7$};
\draw (1) to (2);
\draw (1) to (3);
\draw (2) to (5);
\node at (3.65,14) {Surgery};
\draw (2) to (4);
\draw (2) to (6);
\node at (5.55,14){$\mathcal{A}_2$};
\draw (4) to (7);
\draw (4) to (8);
\node at (1.55,12){$\mathcal{A}_2$};
\draw (7) to (9);
\node at (-3.55,10){$\mathcal{A}_2$};
\draw (7) to (10);
\draw (7) to (11);
\node at (0.1,10){$\mathcal{V}$};
\draw (10) to (12);
\node at (-3.95,8){$\mathcal{T}+ \mathcal{V}$};
\draw (10) to (13);
\node at (0.3,8){Paraproducts};
\draw (13) to (14);
\draw (13) to (15);
\draw (13) to (16);
\node at (2.1,6){$\mathcal{V}$};
\draw (14) to (19);
\node at (-2.5,4){$\mathcal{V}$};
\draw (14) to (20);
\node at (-3.75,4){$\mathcal{V}$};
\draw (15) to (17);
\node at (-1.1,4){$\mathcal{T}+\mathcal{V}$};
\draw (15) to (18);
\node at (2.1,4){$\mathcal{V}$};
\end{tikzpicture}
\end{center}

\begin{enumerate}
\item The flow chart starts at the bilinear form in \eqref{e:RedtoGood}. 
\vspace{0.25 cm}

\item The hypothesis we used in controlling each bilinear form $\mathcal{A}%
_{2},\mathcal{T}$, surgery and/or $\mathcal{V}$ is written on the
edges of the chart. \vspace{0.25cm}

\item To control the terms $A_3^4,A_3^6,A_3^7$ and $A_4^7$ we use the
stopping cube arguments given in Section 6. \vspace{0.25 cm}

\item The edge leading to $A_{2}^{5}$ has been labelled paraproduct as all
the estimates below that use paraproduct arguments to control them.
\end{enumerate}

Now let us begin by proving the estimate in \eqref{e:RedtoGood} for the term 
$A_{1}^{1}$. We decompose $\mathcal{A}_{1}^{1}$ into the following sets.
Denote by 
\begin{equation*}
\mathcal{A}_{1}^{2}:=\{(Q,S)\in \mathcal{A}_{1}^{1}:\delta ^{r}l(Q)\leq
l(S)\leq l(Q),\func{dist}(Q,S)\leq l(Q)\};
\end{equation*}%
\begin{equation*}
\mathcal{A}_{2}^{2}:=\{(Q,S)\in \mathcal{A}_{1}^{1}:l(S)\leq l(Q),\func{dist}%
(Q,S)\geq l(Q)\};
\end{equation*}%
\begin{equation*}
\mathcal{A}_{3}^{2}:=\{(Q,S)\in \mathcal{A}_{1}^{1}:l(S)\leq \delta ^{r}l(Q),%
\func{dist}(Q,S)\leq l(Q)\}.
\end{equation*}%
We will show the following estimates below: 
\begin{eqnarray*}
|A_{1}^{2}| &\lesssim &\left( C_{\tau }\sqrt{ \mathcal{A}_{2}}+%
\mathcal{T}+\tau ^{\frac{%
\eta }{2}}\mathcal{N}\right) \Vert f\Vert _{L^{2}(u)}\Vert
g\Vert _{L^{2}(v)} \\
|A_{2}^{2}| &\lesssim &\mathcal{A}_{2}\Vert f\Vert _{L^{2}(u)}\Vert g\Vert
_{L^{2}(v)}.
\end{eqnarray*}

The term $A_{1}^{2}$ is easily controlled using a `weak boundedness
property', which we recall (even though it will not be needed):
\begin{definition}
Weak Boundedness Condition: For a constant $C_{WBP}>1$, we have $C_{WBP}$ as the
best constant in the inequality 
\begin{equation*}
\left\vert \int_{S}T(u1_{Q})dv\right\vert \leq C_{WBP}u(Q)^{\frac{1}{2}%
}v(S)^{\frac{1}{2}},
\end{equation*}%
where $Q,S$ are cubes such that $\delta ^{r}l(Q)\leq l(S)\leq l(Q)$ and $%
\func{dist}(Q,S)\leq l(Q)$.
\end{definition}

We can avoid the weak boundedness property if we instead use `surgery' to control the \emph{average over grids} in terms of only the testing and $\mathcal{A}_2$ conditions, and a small multiple of the operator norm.  It is clear that the lemma below provides control on the term $A_1^2$ as desired with a small multiple of the norm that can be absorbed by choosing the parameter $\tau$ appropriately small.

\begin{lemma}
\label{l:4.1} The following estimate holds: 
\begin{eqnarray*}
\left\vert \mathbf{E}_{\mathcal{D}\in \Omega }\sum\limits_{(Q,S)\in 
\mathcal{A}_{1}^{2}}\sum\limits_{\epsilon
=1}^{M_{Q}-1}\sum\limits_{k=1}^{M_{S}-1}\langle f,h_{Q}^{\epsilon }\rangle
_{u}\langle Th_{Q}^{\epsilon },h_{S}^{k}\rangle _{v}\langle
g,h_{S}^{k}\rangle _{v}\right\vert 
& \lesssim &\left( C_{\tau }\sqrt{\mathcal{A}_{2}}+\mathcal{T}+\tau ^{\frac{\eta }{2}}%
\mathcal{N}\right) \Vert f\Vert _{L^{2}(u)}\Vert g\Vert
_{L^{2}(v)}.
\end{eqnarray*}
\end{lemma}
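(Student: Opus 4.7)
The plan is to perform a surgery decomposition of the bilinear form over the index set $\mathcal{A}_1^2$. Because the pairs $(Q,S)$ here are comparable in scale (within a factor of $\delta^r$) and $\func{dist}(Q,S)\leq l(Q)$, the pairs $(Q,S)$ in $\mathcal{A}_1^2$ live in one of three geometric regimes. First I would partition $\mathcal{A}_1^2$ into: (a) \emph{containment} pairs, where $S$ is contained in $Q$ (or contained in a comparable $\mathcal{D}$-cube $Q'$ with $Q'\subset Q$); (b) \emph{separated} pairs, where $\func{dist}(Q,S)\geq \tau \, l(Q)$ for the surgery parameter $\tau>0$; and (c) \emph{boundary} pairs, where $S$ meets the $\tau$-collar $\partial_{\tau l(Q)}Q$ but is not strictly inside it.

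For the containment regime (a), I would sum the Haar coefficients of $g$ over all $S\subset Q$ of comparable scale, obtain an expression of the form $\langle T(u\, \mathbf{1}_{Q_\ast} h_Q^\epsilon), g_{Q_\ast}\rangle_v$ where $Q_\ast$ is the union of the relevant children, and reduce via Cauchy--Schwarz to the testing inequality $\Vert T(u\mathbf{1}_{Q_\ast})\Vert_{L^2(Q_\ast,v)}\leq \mathcal{T}\Vert \mathbf{1}_{Q_\ast}\Vert_{L^2(u)}$. Combined with the Haar orthonormality and Bessel's inequality for $f,g$, this produces the $\mathcal{T}\Vert f\Vert_{L^2(u)}\Vert g\Vert_{L^2(v)}$ contribution. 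For the separated regime (b), the distance $\func{dist}(Q,S)\geq \tau l(Q)$ lets me apply the kernel size estimate \eqref{size of C-Z-S-I-O} to $\langle T h_Q^\epsilon, h_S^k\rangle_v$; together with the mean-zero property of the Haar functions and the bounds $\Vert h_Q^\epsilon\Vert_{L^1(u)}\lesssim u(Q)^{1/2}$, $\Vert h_S^k\Vert_{L^1(v)}\lesssim v(S)^{1/2}$, and summation using the Poisson-type quantity $K(Q,v)$, this reduces via Cauchy--Schwarz to the $\mathcal{A}_2$ condition \eqref{thm2 testing 1}, producing the $C_\tau \sqrt{\mathcal{A}_2}$ contribution (where $C_\tau$ absorbs the dependence on $\tau$).

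The main obstacle is the boundary regime (c), where neither kernel decay nor testing is directly available. This is where surgery is used. I would bound the contribution of (c) crudely by the operator norm $\mathcal{N}$ after restricting the functions $f,g$ to the $\tau$-collar of $\mathcal{D}$-cubes: writing $f_\tau^{\mathcal{D}}:= f\cdot \mathbf{1}_{\bigcup_{k,\alpha}\partial_{\tau l(Q_\alpha^k)}Q_\alpha^k}$, the contribution of (c) is controlled by $\mathcal{N}\Vert f_\tau^{\mathcal{D}}\Vert_{L^2(u)}\Vert g\Vert_{L^2(v)}$ (and the symmetric term). Taking the expectation $\mathbf{E}_{\mathcal{D}\in \Omega}$ and invoking the probabilistic estimate \eqref{surgery} from Theorem \ref{t:randomgrid} (together with Fubini), one gets
\begin{equation*}
\mathbf{E}_{\mathcal{D}} \Vert f_\tau^{\mathcal{D}}\Vert_{L^2(u)}^2 \leq C_2\tau^\eta \Vert f\Vert_{L^2(u)}^2,
\end{equation*}
and hence $\mathbf{E}_{\mathcal{D}}\Vert f_\tau^{\mathcal{D}}\Vert_{L^2(u)}\lesssim \tau^{\eta/2}\Vert f\Vert_{L^2(u)}$ by Cauchy--Schwarz. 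This yields the $\tau^{\eta/2}\mathcal{N}$ term in the bound.

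Collecting the three contributions, I obtain the claimed estimate. The hard part is setting up the surgery so that the boundary error is really measured by the single-grid probability $\tau^\eta$ (rather than the product probability), which is why the expectation over $\mathcal{D}$ appears explicitly in the statement; once the partition (a)--(c) is arranged cleanly, the rest is a combination of Cauchy--Schwarz, Bessel, and the three hypotheses.
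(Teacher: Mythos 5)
Your three-way split (testing, $\mathcal{A}_2$, surgery) matches the paper's strategy in spirit, and you correctly identify that the $\tau^{\eta/2}$ factor comes from the probabilistic collar estimate. However, there are two genuine gaps.

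First, the decomposition should not be a classification of pairs $(Q,S)$ but a \emph{spatial} decomposition performed at the level of children. The paper writes $\triangle_Q^u f=\sum_{Q'\in\mathcal{H}(Q)}\mathbf{1}_{Q'}\triangle_Q^u f$ and $\triangle_S^v g=\sum_{S'\in\mathcal{H}(S)}\mathbf{1}_{S'}\triangle_S^v g$, producing terms $\langle T(u\mathbf{1}_{Q'}),\mathbf{1}_{S'}\rangle_v$ multiplied by constant coefficients. The indicator $\mathbf{1}_{S'}$ is then split into $\mathbf{1}_{S'\cap Q'}$, $\mathbf{1}_{(S'\setminus Q')\cap\partial_\lambda Q'}$, and $\mathbf{1}_{(S'\setminus Q')\setminus\partial_\lambda Q'}$. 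Because $Q\in\mathcal{D}$ and $S\in\mathcal{G}$ come from \emph{independent} grids, even a pair with $S\subset Q$ generically has $S'$ straddling several children $Q'$ of $Q$; your regime (a) cannot be closed out by testing alone without falling back on the collar/far regions of the children, so the three regimes do not decouple at the pair level.

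Second, and more seriously, your treatment of the boundary regime (c) is not correct as stated. You propose to bound the regime-(c) contribution by $\mathcal{N}\Vert f_\tau^{\mathcal{D}}\Vert_{L^2(u)}\Vert g\Vert_{L^2(v)}$ with $f_\tau^{\mathcal{D}}=f\cdot\mathbf{1}_{\cup\,\partial_{\tau\delta^k}Q_\alpha^k}$. But the sum over regime-(c) pairs involves the Haar coefficients $\langle f,h_Q^{\epsilon}\rangle_u$ in full; these depend on $f$ over all of $Q$, not on $f$ restricted to a collar, so this sum is not a truncation of $f$ and cannot be controlled by $\Vert f_\tau^{\mathcal{D}}\Vert_{L^2(u)}$. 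The correct surgery keeps the full coefficients $\mathbf{E}_{Q'}^u\triangle_Q^u f$ and $\mathbf{E}_{S'}^v\triangle_S^v g$ and pulls the gain from the \emph{$v$-measure of the overlap set}: one bounds $|\langle T(u\mathbf{1}_{Q'}),\mathbf{1}_{(S'\setminus Q')\cap\partial_\lambda Q'}\rangle_v|\leq\mathcal{N}\sqrt{u(Q')}\sqrt{v((S'\setminus Q')\cap\partial_\lambda Q')}$, then takes the grid expectation of $\sum_{Q,Q'}v(S\cap\partial_\lambda Q')\lesssim\tau^\eta v(S)$ by \eqref{surgery'}, and finishes with Cauchy--Schwarz. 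The $\tau^\eta$ gain lands on a small $v$-measure inside a square root, not on a restriction of $f$ — this is the step your plan does not reproduce.
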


 For the proof of Lemma \ref{l:4.1} we recall
the surgery estimate (\ref{surgery}) for the random dyadic systems
constructed in \cite{HyMa} and \cite{TA} with parameter $\delta >0$. There
are positive constants $C_3,\eta >0$ such that for every $x\in X,\tau >0$
and $k\in \mathbb{Z}$, 
\begin{equation}
\mathbb{P}\left( \left\{ \omega \in \Omega :x\in \bigcup\limits_{\alpha
}\partial _{\tau \delta ^{k}}Q_{\alpha }^{k}\left( \omega \right) \right\}
\right) \leq C_3\tau ^{\eta }\ .  \label{surgery'}
\end{equation}%
We can now prove an extension to spaces of homogeneous type of the surgery
lemma of Lacey and Wick in \cite[Lemma 8.5]{LaWi}, by repeating their
argument with obvious modifications. In the estimate of Lemma \ref{l:4.1},
the explicit Haar functions are used in the decompositions, whereas in the
surgery lemma it is convenient to use instead the associated Haar
projections 
\begin{equation*}
\bigtriangleup _{Q}^{u}f=\sum\limits_{\epsilon =1}^{M_{Q}-1}\langle
f,h_{Q}^{\epsilon }\rangle _{u}h_{Q}^{\epsilon }\text{ and }\bigtriangleup
_{S}^{v}g=\sum\limits_{k=1}^{M_{S}-1}\langle g,h_{S}^{k}\rangle
_{v}h_{S}^{k}\ .
\end{equation*}%
We say that two cubes $Q$ and $S$ are $\rho$-close if $\delta ^{\rho}\leq \frac{l \left( Q\right) }{l \left( S\right) }\leq
\delta ^{-\rho}$ and $d\left( Q,S\right) \leq \max\{l \left(
Q\right), l \left(
S\right)\} $.

\begin{lemma}[Surgery Lemma]
\label{l:surgery}
For $0<\tau <1$ and sufficiently large $r$ we have%
\begin{equation}
\mathbf{E}_{\mathcal{D}\in \Omega }\sum_{\substack{ \left( Q,S\right) \in 
\mathcal{D}_{\limfunc{good}}\times \mathcal{G}_{\limfunc{good}}  \\ Q\text{
and }S\text{ are }\rho\text{-close}}}\left\vert \left\langle
T\left(u \bigtriangleup _{Q}^{u }f\right) ,\bigtriangleup
_{S}^{v }g\right\rangle _{v }\right\vert  \label{more restricted}
\lesssim\left( C_{\tau }\sqrt{\mathcal{A}_{2}}+\mathcal{T}+\tau ^{\frac{\eta }{2}}%
\mathcal{N}_{T^{\alpha }}\right) \left\Vert f\right\Vert _{L^{2}\left(
u \right) }\left\Vert g\right\Vert _{L^{2}\left( v \right) }\ . 
\end{equation}
\end{lemma}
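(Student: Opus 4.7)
The plan is to adapt the Euclidean surgery argument of Lacey--Wick \cite{LaWi} to the quasi-metric setting, with the probabilistic small-boundary estimate \eqref{surgery'} replacing explicit Euclidean geometry. Fix a $\rho$-close pair $(Q,S)\in\mathcal{D}_{\mathrm{good}}\times\mathcal{G}_{\mathrm{good}}$ and set $l:=\max\{l(Q),l(S)\}$. Introduce the $\tau$-boundary set
\begin{equation*}
E_{\tau}(Q,S):=\bigl\{x\in X:\operatorname{dist}(x,X\setminus Q)\wedge\operatorname{dist}(x,Q)<\tau l\bigr\}\cup\bigl\{x\in X:\operatorname{dist}(x,X\setminus S)\wedge\operatorname{dist}(x,S)<\tau l\bigr\},
\end{equation*}
and split each Haar projection as $\bigtriangleup_{Q}^{u}f=(\bigtriangleup_{Q}^{u}f)\mathbf{1}_{E_{\tau}}+(\bigtriangleup_{Q}^{u}f)\mathbf{1}_{E_{\tau}^{c}}$, and similarly for $\bigtriangleup_{S}^{v}g$. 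Expanding the bilinear form yields three families of terms: a \emph{boundary} piece in which at least one factor is localized to $E_{\tau}(Q,S)$; a \emph{diagonal} piece where both interior factors lie over the common interior of $Q\cap S$; and a \emph{separated} piece where the surgery has forced the two interior supports to sit at mutual distance at least $\tau l$.

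For the separated piece, the kernel size bound \eqref{size of C-Z-S-I-O} and doubling give a clean tail estimate, and the Haar cancellation on the larger cube upgrades it by a factor of $(d(x,x')/d(x,y))^{\kappa}$ via \eqref{smooth of C-Z-S-I-O}. Summing the resulting Poisson-type tails over all $\rho$-close pairs, and using the $K(Q,v)$ form of the $\mathcal{A}_{2}$ functional \eqref{thm2 testing 1}, bounds this contribution by $C_{\tau}\sqrt{\mathcal{A}_{2}}\|f\|_{L^{2}(u)}\|g\|_{L^{2}(v)}$; here $C_{\tau}$ absorbs a $\tau^{-\kappa}$ loss and a harmless multiplicative constant for the finite range $\delta^{\rho}\le l(Q)/l(S)\le \delta^{-\rho}$. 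For the diagonal piece, excising $E_{\tau}$ forces the remaining interior of $Q$ to lie either entirely inside $S$ or entirely outside $S$ (and symmetrically), so the form collapses to expressions of the shape $\langle T(u\mathbf{1}_{Q^{\ast}}),\mathbf{1}_{S^{\ast}}\rangle_{v}$ for measurable subsets $Q^{\ast}\subset Q$ and $S^{\ast}\subset S$. The forward and dual testing hypotheses then produce the $\mathcal{T}$ contribution, after a Cauchy--Schwarz estimate and summation over the $\rho$-close pairs, using the orthogonality of the Haar projections to recover $\|f\|_{L^{2}(u)}$ and $\|g\|_{L^{2}(v)}$.

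The boundary piece is where the probabilistic gain is essential. Averaging over the dyadic grid and applying \eqref{surgery'} yields $\mathbf{E}_{\mathcal{D}\in\Omega}\,u(E_{\tau}(Q,S))\lesssim\tau^{\eta}u(B(Q))$, and the corresponding statement in the second grid. Consequently the $L^{2}(u)$-norm of $\sum_{Q}(\bigtriangleup_{Q}^{u}f)\mathbf{1}_{E_{\tau}}$ is, in expectation, at most $\tau^{\eta/2}\|f\|_{L^{2}(u)}$, and likewise for $g$. Applying the trivial bound $|\langle T(u\varphi),\psi\rangle_{v}|\le\mathcal{N}\|\varphi\|_{L^{2}(u)}\|\psi\|_{L^{2}(v)}$ to the boundary terms then produces the $\tau^{\eta/2}\mathcal{N}$ contribution, which is ultimately absorbed by choosing $\tau$ small in the global argument. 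The main obstacle I expect is the geometric bookkeeping in the quasi-metric setting: the cubes $Q_{\alpha}^{k}(\omega)$ have no Euclidean boundary, so the surgery must be phrased in terms of the abstract $\tau$-neighborhoods of \eqref{surgery} and one must simultaneously track the two independent grids $\mathcal{D}$ and $\mathcal{G}$. This replaces the intuitive Euclidean picture of \cite[Lemma 8.5]{LaWi}, but once the boundary decomposition is set up, the remaining arithmetic parallels the classical case.
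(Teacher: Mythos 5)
Your proposal has the right three-way split (probability for the boundary, $\mathcal{A}_2$ for the separated tail, testing for the remainder), and your treatment of the boundary and separated pieces is essentially sound. However, the handling of the \emph{diagonal} piece contains a genuine gap, and the paper's proof is organized differently precisely to avoid it.

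You claim that after excising $E_\tau(Q,S)$ "the form collapses to expressions of the shape $\langle T(u\mathbf{1}_{Q^{\ast}}),\mathbf{1}_{S^{\ast}}\rangle_{v}$ for measurable subsets $Q^{\ast}\subset Q$, $S^{\ast}\subset S$," which you then propose to control via the forward and dual testing conditions. But the testing hypotheses bound $\int_{Q}|T(u\mathbf{1}_{Q})|^{2}\,dv$ only when $Q$ is an actual \emph{cube}; they say nothing about $T(u\mathbf{1}_{Q^\ast})$ for a general measurable subset $Q^{\ast}$. Moreover, since $Q\in\mathcal{D}$ and $S\in\mathcal{G}$ live in \emph{independent} random grids, the sets $Q\cap S$, $Q^\ast$, $S^\ast$ have no dyadic structure, and excising a $\tau l$-collar does not make $Q\cap E_\tau^c$ lie entirely inside or outside $S$ — it merely splits it into a "deep inside $S$" part and a "deep outside $S$" part, still not a cube. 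So the step "testing plus Cauchy--Schwarz gives the $\mathcal{T}$ contribution" does not go through as stated.

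The paper circumvents this by first expanding the Haar projections over the children: writing $\triangle_Q^u f=\sum_{Q'\in\mathcal{H}(Q)}(\mathbf{E}_{Q'}^{u}\triangle_Q^{u}f)\mathbf{1}_{Q'}$ and similarly for $\triangle_S^v g$, each inner product becomes a sum of $\langle T(u\mathbf{1}_{Q'}),\mathbf{1}_{S'}\rangle_v$ with $Q'$, $S'$ genuine cubes. The crucial (and surgery-free) observation is that $\mathbf{1}_{S'}=\mathbf{1}_{S'\cap Q'}+\mathbf{1}_{S'\setminus Q'}$ and $S'\cap Q'\subset Q'$ \emph{automatically}, so Cauchy--Schwarz against $\|T(u\mathbf{1}_{Q'})\|_{L^2(Q',v)}\le\mathcal{T}\sqrt{u(Q')}$ handles the intersection piece with no $\tau$-dependence at all. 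Surgery (the $\partial_\lambda Q'$ collar and the probabilistic bound~\eqref{surgery'}) is then needed only for the part of $S'\setminus Q'$ near $\partial Q'$, and the far part goes to $\mathcal{A}_2$. In short: you placed the surgery cut before the child decomposition, which is why testing stops being applicable; reversing the order — children first, then decide intersection vs. complement, then surgery on the complement — is what makes the argument close.
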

Note that we can choose $\rho$ to be $r$ so Lemma \ref{l:4.1} follows immediately from Lemma \ref{l:surgery} since we are summing on a larger collection of cubes and we have pulled the absolute value inside the sum.

\begin{proof}
In order to prove \eqref{more restricted}, we invoke the surgery estimate using \eqref{surgery'}. Given $0<\lambda <\frac{1}{2}$, define 
\begin{equation*}
S_{\lambda }:= \left\{ x\in S:\limfunc{dist}\left( x,\partial S\right)
>\lambda l \left( S\right) \right\} .
\end{equation*}%
Then we write%
\begin{eqnarray*}
\left\langle T\left(u \bigtriangleup _{Q}^{u
}f\right) ,\bigtriangleup _{S}^{v }g\right\rangle _{v }
&=&\left\langle T\left( u\sum_{Q^{\prime }\in \mathcal{H}\left( Q\right) }\mathbf{1}_{Q^{\prime }}\bigtriangleup
_{Q}^{u }f\right) ,\sum_{S^{\prime }\in \mathcal{H}\left( J\right) }\mathbf{1}_{S^{\prime }}\bigtriangleup _{S}^{v
}g\right\rangle _{v } \\
&=&\sum_{Q^{\prime }\in \mathcal{H}\left( Q\right) }\sum_{S^{\prime }\in 
\mathcal{H}\left( S\right) }\left(\mathbf{E}_{Q^{\prime }}^{u }\bigtriangleup
_{Q}^{u }f\right) \ \left\langle T(u \mathbf{1}%
_{Q^{\prime }}),\mathbf{1}_{S^{\prime }}\right\rangle _{v }\ \left(
\mathbf{E}_{S^{\prime }}^{v }\bigtriangleup _{S}^{v }g\right) ,
\end{eqnarray*}%
and so%
\begin{eqnarray*}
&&\sum_{\substack{ \left( Q,S\right) \in \mathcal{D}_{\limfunc{good}}\times 
\mathcal{G}_{\limfunc{good}}  \\ Q\text{ and }S\text{ are }\rho%
\text{-close}}}\left\vert \left\langle T\left(u \bigtriangleup
_{Q}^{u }f\right) ,\bigtriangleup _{S}^{v }g\right\rangle _{v
}\right\vert \\
&\leq &\sum_{\substack{ \left( Q,S\right) \in \mathcal{D}_{%
\limfunc{good}}\times \mathcal{G}_{\limfunc{good}}  \\ Q\text{ and }S\text{
are }\rho\text{-close}}}\sum_{Q^{\prime }\in \mathcal{H}\left(
S\right) }\sum_{S^{\prime }\in \mathcal{H}\left( S\right) }\left\vert
\left( \mathbf{E}_{Q^{\prime }}^{u }\bigtriangleup _{Q}^{u }f\right) \
\left\langle T(u\mathbf{1}_{Q^{\prime }}),\mathbf{1}%
_{S^{\prime }\cap Q^{\prime }}\right\rangle _{v }\ \left( \mathbf{E}_{S^{\prime
}}^{v }\bigtriangleup _{S}^{v }g\right) \right\vert \\
&&+\sum_{\substack{ \left( Q,S\right) \in \mathcal{D}_{\limfunc{good}}\times 
\mathcal{G}_{\limfunc{good}}  \\ Q\text{ and }S\text{ are }\rho%
\text{-close}}}\sum_{Q^{\prime }\in \mathcal{H}\left( Q\right)
}\sum_{S^{\prime }\in \mathcal{H}\left( S\right) }\left\vert \left(
\mathbf{E}_{Q^{\prime }}^{u }\bigtriangleup _{Q}^{u }f\right) \
\left\langle T(u\mathbf{1}_{Q^{\prime }}),\mathbf{1}%
_{S^{\prime }\setminus Q^{\prime }}\right\rangle _{v }\ \left(
\mathbf{E}_{S^{\prime }}^{v }\bigtriangleup _{S}^{v }g\right) \right\vert \\
&=: &\operatorname{Term}_1+\operatorname{Term}_{2}\ .
\end{eqnarray*}%
Now for convenience of notation and to shorten some displays  we write 
\begin{equation*}
\sum^{\maltese }:= \sum_{\substack{ \left( Q,S\right) \in \mathcal{D}_{%
\limfunc{good}}\times \mathcal{G}_{\limfunc{good}}  \\ Q\text{ and }S\text{
are }\rho\text{-close}}}\sum_{Q^{\prime }\in \mathcal{H}\left(
Q\right) }\sum_{S^{\prime }\in \mathcal{H}\left( S\right) },
\end{equation*}%
then the inequality 
\begin{equation*}
\left\vert \left\langle T(u\mathbf{1}_{Q^{\prime }}),%
\mathbf{1}_{S^{\prime }\cap Q^{\prime }}\right\rangle _{v }\right\vert
\leq \sqrt{\int_{Q^{\prime }}\left\vert T(u\mathbf{1}%
_{Q^{\prime }})\right\vert ^{2}dv }\sqrt{v(S^{\prime }\cap
Q^{\prime })}\leq \mathcal{T}\sqrt{u(Q^{\prime })}\sqrt{v(S^{\prime })}
\end{equation*}%
shows that%
\begin{eqnarray*}
\operatorname{Term}_1 &\leq &\mathcal{T}\sum^{\maltese }\left( \left\vert \mathbf{E}_{Q^{\prime
}}^{u }\bigtriangleup _{Q}^{u }f\right\vert \sqrt{u \left(
Q^{\prime }\right)}\right) \left( \left\vert \mathbf{E}_{S^{\prime }}^{v }\bigtriangleup
_{S}^{v }g\right\vert \sqrt{v\left( S^{\prime
}\right)}\right) \\
&\leq &\mathcal{T}\sqrt{\sum^{\maltese }\left\vert \mathbf{E}_{Q^{\prime
}}^{u }\bigtriangleup _{Q}^{u }f\right\vert ^{2}u\left(
Q^{\prime }\right)}\sqrt{\sum^{\maltese }\left\vert
\mathbf{E}_{S^{\prime }}^{v }\bigtriangleup _{S}^{v }g\right\vert
^{2}v\left( S^{\prime }\right)}\lesssim \mathcal{T}
\left\Vert f\right\Vert _{L^{2}\left( u \right) }\left\Vert
g\right\Vert _{L^{2}\left( v \right) }.
\end{eqnarray*}

To control $\operatorname{Term}_2$ we further decompose $S^{\prime }\setminus Q^{\prime }$
into small and large parts,%
\begin{equation*}
S^{\prime }\setminus Q^{\prime }:= \left\{ \left( S^{\prime }\setminus
Q^{\prime }\right) \cap \partial _{\lambda }Q^{\prime }\right\} \overset{%
\cdot }{\bigcup }\left\{ \left( S^{\prime }\setminus Q^{\prime }\right)
\setminus \partial _{\lambda }Q^{\prime }\right\} := E\,\overset{\cdot }{%
\bigcup }\,F,
\end{equation*}%
and estimate $\operatorname{Term}_2$ accordingly,%
\begin{equation*}
\operatorname{Term}_2\leq \sum^{\maltese }\left\vert \left( \mathbf{E}_{Q^{\prime }}^{u
}\bigtriangleup _{Q}^{u }f\right) \ \left\langle T(u%
\mathbf{1}_{Q^{\prime }}),\mathbf{1}_{E}\right\rangle _{v }\ \left(
\mathbf{E}_{S^{\prime }}^{v }\bigtriangleup _{S}^{v }g\right) \right\vert
+\sum^{\maltese }\left\vert \left( \mathbf{E}_{Q^{\prime }}^{u }\bigtriangleup
_{Q}^{u }f\right) \ \left\langle T(u\mathbf{1}%
_{Q^{\prime }}),\mathbf{1}_{F}\right\rangle _{v }\ \left( \mathbf{E}_{S^{\prime
}}^{v }\bigtriangleup _{S}^{v }g\right) \right\vert :=
\operatorname{Term}_{21}+\operatorname{Term}_{22}.
\end{equation*}%
Now $F=\left( S^{\prime }\setminus Q^{\prime }\right) \setminus \partial
_{\lambda }Q^{\prime }$ is contained in $S^{\prime }$ and has distance at
least $cl \left( Q\right) $ from the cube $Q^{\prime }$, so we can
control $\operatorname{Term}_{22}$ by the $\mathcal{A}_2$ condition using Cauchy-Schwarz as above,%
\begin{equation*}
\operatorname{Term}_{22}\leq \sum^{\maltese }\left\vert \mathbf{E}_{Q^{\prime }}^{u }\bigtriangleup
_{Q}^{u }f\right\vert \left( \mathcal{A}_{2}\sqrt{u\left( Q^{\prime
}\right)}\sqrt{v\left( S^{\prime }\right)}%
\right) \left\vert \mathbf{E}_{S^{\prime }}^{v }\bigtriangleup _{S}^{v
}g\right\vert \lesssim \mathcal{A}_{2}\left\Vert f\right\Vert _{L^{2}\left(
u \right) }\left\Vert g\right\Vert _{L^{2}\left( v \right)}.
\end{equation*}

Finally, we use the operator norm to control the \emph{average} of $B_{1}$ by%
\begin{eqnarray*}
\mathbf{E}_{\mathcal{D}\in \Omega }\operatorname{Term}_{21} &\leq &\mathbf{E}_{\mathcal{D}\in
\Omega }\sum^{\maltese }\left\vert \mathbf{E}_{Q^{\prime }}^{u }\bigtriangleup
_{Q}^{u }f\right\vert \ \left( \mathcal{N}\sqrt{u\left(
Q^{\prime }\right)}\sqrt{v\left( \left( S^{\prime
}\setminus Q^{\prime }\right) \cap \partial _{\lambda }Q^{\prime
}\right)}\right) \ \left\vert \mathbf{E}_{S^{\prime }}^{v
}\bigtriangleup _{S}^{v }g\right\vert \\
&\lesssim &\mathcal{N}\mathbf{E}_{\mathcal{D}\in \Omega }\left\Vert
f\right\Vert _{L^{2}\left( u \right) }\sqrt{\sum^{\maltese }\left\vert
\bigtriangleup _{S}^{v }g\right\vert ^{2}v\left( \left( J^{\prime
}\setminus Q^{\prime }\right) \cap \partial _{\lambda }Q^{\prime
}\right)} \\
&\lesssim & \mathcal{N}\left\Vert f\right\Vert _{L^{2}\left( u \right)
}\sqrt{\sum_{S\in \mathcal{G}_{\limfunc{good}}}\left\vert \bigtriangleup
_{S}^{v }g\right\vert ^{2}\mathbf{E}_{\mathcal{D}\in \Omega }\left(
\sum _{\substack{ Q\in \mathcal{D}_{\limfunc{good}},\ Q^{\prime }\in 
\mathcal{H}\left( Q\right)  \\ Q\text{ and }S\text{ are }\rho%
\text{-close}}}v\left( S\cap \partial _{\lambda }Q^{\prime }\right)
\right) }.
\end{eqnarray*}%
Now we use \eqref{surgery'} to obtain%
\begin{equation*}
\mathbf{E}_{\mathcal{D}\in \Omega }\left( \sum_{\substack{ Q\in \mathcal{D}_{%
\limfunc{good}},\ Q^{\prime }\in \mathcal{H}\left( Q\right)  \\ Q\text{ and 
}S\text{ are }\rho\text{-close}}}v \left ( S\cap \partial _{\tau
}Q^{\prime }\right)\right) \lesssim \tau ^{\eta }v\left(
S\right),
\end{equation*}%
which altogether gives%
\begin{equation*}
\mathbf{E}_{\mathcal{D}\in \Omega }\operatorname{Term}_{21}\lesssim \mathcal{N}\left\Vert
f\right\Vert _{L^{2}\left( u \right) }\sqrt{\sum_{S\in \mathcal{G}_{%
\limfunc{good}}}\left\vert \bigtriangleup _{S}^{v }g\right\vert
^{2}\tau^{\eta} v\left(S\right)}\lesssim \tau ^{\frac{\eta }{2}}%
\mathcal{N}\left\Vert f\right\Vert _{L^{2}\left( u \right)
}\left\Vert g\right\Vert _{L^{2}\left( v \right) }.
\end{equation*}
The proof of Lemma \ref{l:surgery} is complete.
\end{proof}

The next lemma controls $A_{2}^{2}$.

\begin{lemma}
\label{l:4.2} The following estimate holds: 
\begin{equation*}
\left\vert \sum\limits_{(Q,S)\in \mathcal{A}_{2}^{2}}\sum\limits_{\epsilon
=1}^{M_{Q}-1}\sum\limits_{k=1}^{M_{S}-1}\langle f,h_{Q}^{\epsilon }\rangle
_{u}\langle T(uh_{Q}^{\epsilon }),h_{S}^{k}\rangle _{v}\langle
g,h_{S}^{k}\rangle _{v}\right\vert \lesssim \mathcal{A}_{2}\Vert f\Vert
_{L^{2}(u)}\Vert g\Vert _{L^{2}(v)}.
\end{equation*}
\end{lemma}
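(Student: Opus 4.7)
The pairs in $\mathcal{A}_2^2$ satisfy $l(S)\le l(Q)\le\operatorname{dist}(Q,S)$, so they are well separated and the Calder\'on--Zygmund smoothness applies; exploiting the $v$-cancellation of the smaller Haar function $h_S^k$ is what produces the quantitative off-diagonal decay. First I would write
\begin{equation*}
\langle T(uh_Q^\epsilon),h_S^k\rangle_v = \int\!\!\int \bigl[\mathfrak{K}(x,y)-\mathfrak{K}(x_S,y)\bigr]h_Q^\epsilon(y)\,du(y)\,h_S^k(x)\,dv(x),
\end{equation*}
where $x_S$ is the center of $S$. For $x\in S$ and $y\in Q$ one has $d(x,x_S)\lesssim l(S)\le l(Q)\lesssim d(x,y)$, so the smoothness hypothesis with $\omega(t)=t^\kappa$ together with the doubling of $\mu$ yields $|\mathfrak{K}(x,y)-\mathfrak{K}(x_S,y)|\lesssim (l(S)/D)^\kappa/\mu(B(x_Q,D))$ where $D:=l(Q)+\operatorname{dist}(Q,S)$. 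Combining this with the $L^1$-bounds $\|h_Q^\epsilon\|_{L^1(u)}\lesssim u(Q)^{1/2}$ and $\|h_S^k\|_{L^1(v)}\lesssim v(S)^{1/2}$ gives the off-diagonal bound
\begin{equation*}
|\langle T(uh_Q^\epsilon),h_S^k\rangle_v|\lesssim u(Q)^{1/2}\,v(S)^{1/2}\,\Bigl(\tfrac{l(S)}{l(Q)}\Bigr)^\kappa\,\Phi(Q,S),
\end{equation*}
with $\Phi(Q,S):=(l(Q)/D)^\kappa/\mu(B(x_Q,D))$.

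Next I would recognize $\Phi(Q,S)$ as essentially the density of $K(Q,v)$ at $S$: since $\operatorname{dist}(y,Q)\approx\operatorname{dist}(S,Q)$ uniformly for $y\in S$, one has $v(S)\Phi(Q,S)\approx\int_S\Psi(Q,y)\,dv(y)$, where $\Psi$ denotes the integrand in the definition of $K(Q,v)$. Summing over any family of cubes $S$ at a fixed generation $l(S)=\delta^m l(Q)$ that partitions a portion of $X\setminus Q$ therefore gives $\sum_S v(S)\Phi(Q,S)\lesssim K(Q,v)$, and the symmetric statement holds after interchanging the roles of $Q,S$ and $u,v$.

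Finally, writing $A_Q:=\sum_\epsilon|\langle f,h_Q^\epsilon\rangle_u|$ and $B_S:=\sum_k|\langle g,h_S^k\rangle_v|$ so that $\sum_Q A_Q^2\lesssim\|f\|_{L^2(u)}^2$ and $\sum_S B_S^2\lesssim\|g\|_{L^2(v)}^2$, I would organize the double sum dyadically by the scale ratio $l(S)/l(Q)=\delta^m$, peel off the summable factor $\delta^{m\kappa}$, and in each generation apply Cauchy--Schwarz (a Schur-type step) with weights that pair $u(Q)^{1/2}$ with $A_Q$ and $v(S)^{1/2}$ with $B_S$. After invoking Step~2 the resulting one-variable sums become $u(Q)K(Q,v)\lesssim \mathcal{A}_2^{2}\,l(Q)^n$ on one side and $v(S)K(S,u)\lesssim \mathcal{A}_2^{2}\,l(S)^n$ on the other, producing the desired bound $|A_2^2|\lesssim\mathcal{A}_2\|f\|_{L^2(u)}\|g\|_{L^2(v)}$. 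The principal obstacle is this last step: choosing the Schur weights so that both one-variable suprema collapse cleanly to the $\mathcal{A}_2$ constant and the $l(Q)^n,\,l(S)^n$ normalizations built into the $\mathcal{A}_2$ condition cancel symmetrically; the geometric decay $(l(S)/l(Q))^\kappa$ furnished by the kernel smoothness is precisely what makes this telescoping across generations converge.
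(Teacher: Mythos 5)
Your off-diagonal bound (kernel smoothness plus the $v$-cancellation of $h_S^k$ plus the $L^1$ bounds on the Haar functions) and your Step~2 identity $\sum_S v(S)\Phi(Q,S)\lesssim K(Q,v)$ at a fixed generation are both correct and are essentially what the paper's Lemma~\ref{l:4.3} packages; the paper applies that lemma child-by-child rather than to the whole Haar function, but that difference is cosmetic. The gap is in the final summation, which you yourself flag as ``the principal obstacle'' and leave unresolved: you ask for Schur weights that make the $l(Q)^n$ and $l(S)^n$ normalizations of the $\mathcal{A}_2$ condition ``cancel symmetrically,'' but that is not the mechanism, and as stated it does not close the estimate.

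In the paper's proof the bilinear form is split by AM--GM with an inserted gain/loss factor $(l(S)/l(Q))^{\pm\sigma}$, $0<\sigma<1$, and each of the two resulting one-variable sums must \emph{independently} produce a factor $\mathcal{A}_2$; there is no cross-cancellation between $l(Q)^n$ and $l(S)^n$. The missing ingredient in your write-up is a \emph{second} Cauchy--Schwarz inside the inner sum at a fixed scale $l(S)=\delta^i l(Q)$ (with $D:=l(Q)+\operatorname{dist}(Q,S)$):
\begin{equation*}
u(Q)^{1/2}\sum_{S}v(S)^{1/2}\,\tfrac{l(S)^{\kappa}}{D^{\kappa+n}}\ \leq\ u(Q)^{1/2}\Bigl(\sum_{S}v(S)\,\tfrac{l(S)^{\kappa}}{D^{\kappa+n}}\Bigr)^{1/2}\Bigl(\sum_{S}\tfrac{l(S)^{\kappa}}{D^{\kappa+n}}\Bigr)^{1/2}.
\end{equation*}
The first inner factor is $\lesssim K(Q,v)^{1/2}$ by your Step~2, while the second is a purely geometric count over the disjoint cubes $S$ at the fixed scale, controlled by the upper dimension $n$ of $\mu$, and it is this factor that supplies the compensating $l(Q)^{-n/2}$ so that $u(Q)^{1/2}K(Q,v)^{1/2}l(Q)^{-n/2}\lesssim\mathcal{A}_2$ (with the dual sum over $Q$ treated symmetrically and giving $v(S)^{1/2}K(S,u)^{1/2}l(S)^{-n/2}$). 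Without this inner factorization, invoking Step~2 alone leaves you with $u(Q)K(Q,v)\lesssim\mathcal{A}_2^{2}\,l(Q)^{n}$ and no mechanism to absorb the excess $l(Q)^n$; so while the architecture of your proposal mirrors the paper's, it is incomplete at precisely the point where the $\mathcal{A}_2$ constant is actually extracted.
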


Before we proceed to prove Lemma \ref{l:4.2}, let us collect a couple of
auxiliary lemmas.

\begin{lemma}
\label{l:4.3} Let $S\subset Q^{\prime }\subset \hat{Q}$ be three cubes with $%
\func{dist}(\partial Q^{\prime },S)\geq l(S)$. Let $H_{S}$ be a function
supported on $S$ and with $v$ integral zero. Then we have 
\begin{equation}
\label{e:4.4}
|\langle T(u1_{\hat{Q}\setminus Q^{\prime }}),H_{S}\rangle _{v}|\lesssim \Vert
H_{S}\Vert _{L^{2}(v)}\Phi (S,1_{\hat{Q}\setminus Q^{\prime}}u)^{\frac{1}{2}}.
\end{equation}%
Here $\Phi (S,1_{\hat{Q}\setminus Q^{\prime }}u):=v(S)K\left( S,1_{\hat{Q}%
\setminus Q^{\prime }}u\right) ^{2}$ where 
\begin{equation*}
K(S,1_{\hat{Q}\setminus Q^{\prime }}u):=\int_{\hat{Q}\setminus Q^{\prime
}}\left( \frac{l(S)}{l(S)+\func{dist}(y,S)}\right) ^{\kappa }\frac{1}{\mu
(B(x_{S},l(S)+\func{dist}(y,S)))}du(y),
\end{equation*}
and $\kappa$ is as in Definition \ref{def 1}.
\end{lemma}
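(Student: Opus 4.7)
The plan is a standard smoothness/mean-zero argument exploiting the kernel H\"older estimate together with the $v$-cancellation of $H_S$. First, since $\int_S H_S\,dv = 0$, pick a reference point $x_S \in S$ (e.g.\ the quasi-center guaranteed by Theorem \ref{theorem dyadic}(3) if $S$ is a dyadic cube) and write
\begin{equation*}
\langle T(u \mathbf{1}_{\hat{Q}\setminus Q'}), H_S\rangle_{v}
= \int_S \bigl[T(u\mathbf{1}_{\hat Q\setminus Q'})(x) - T(u\mathbf{1}_{\hat Q\setminus Q'})(x_S)\bigr]\,H_S(x)\,dv(x).
\end{equation*}
Using the kernel representation (valid since $x \in S \subset Q' \subset X\setminus\mathrm{supp}(u\mathbf{1}_{\hat Q\setminus Q'})$ thanks to the separation $\operatorname{dist}(\partial Q',S)\geq l(S)$), the integrand inside the bracket is bounded by
\begin{equation*}
\int_{\hat{Q}\setminus Q'} |\mathfrak K(x,y) - \mathfrak K(x_S,y)|\, du(y).
\end{equation*}

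The next step is to insert the smoothness estimate \eqref{smooth of C-Z-S-I-O} with $\omega(t)=t^\kappa$. For $x\in S$ and $y\in \hat Q\setminus Q'$, the hypothesis $\operatorname{dist}(\partial Q',S)\geq l(S)$ (together with the geometry of the dyadic cubes via Theorem \ref{theorem dyadic}) gives $d(x,x_S)\lesssim l(S)$ and $d(x_S,y)\gtrsim l(S)+\operatorname{dist}(y,S)$. Taking $r$ (equivalently, the separation constant) large enough, the condition $d(x,x_S)\leq (2A_0)^{-1}d(x_S,y)$ is satisfied, so
\begin{equation*}
|\mathfrak K(x,y)-\mathfrak K(x_S,y)| \lesssim \frac{1}{V(x_S,y)}\left(\frac{l(S)}{l(S)+\operatorname{dist}(y,S)}\right)^{\kappa}.
\end{equation*}
Then applying the doubling property (since $d(x_S,y)\approx l(S)+\operatorname{dist}(y,S)$, we have $V(x_S,y)\approx \mu(B(x_S,l(S)+\operatorname{dist}(y,S)))$), integration in $y$ yields exactly
\begin{equation*}
\bigl|T(u\mathbf{1}_{\hat Q\setminus Q'})(x) - T(u\mathbf{1}_{\hat Q\setminus Q'})(x_S)\bigr| \lesssim K(S, \mathbf{1}_{\hat Q\setminus Q'}u),
\end{equation*}
uniformly in $x\in S$, with $K(\cdot,\cdot)$ as defined in the statement of the lemma.

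Finally, plugging this uniform bound back into the integral and applying Cauchy--Schwarz on $S$ with respect to $v$ gives
\begin{equation*}
|\langle T(u\mathbf{1}_{\hat Q\setminus Q'}), H_S\rangle_v| \lesssim K(S,\mathbf{1}_{\hat Q\setminus Q'}u)\int_S |H_S|\,dv \lesssim K(S,\mathbf{1}_{\hat Q\setminus Q'}u)\,v(S)^{1/2}\,\|H_S\|_{L^2(v)},
\end{equation*}
which is exactly $\Phi(S,\mathbf{1}_{\hat Q\setminus Q'}u)^{1/2}\|H_S\|_{L^2(v)}$.

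The only mildly delicate point is the geometric verification that the hypothesis $\operatorname{dist}(\partial Q',S)\geq l(S)$, combined with the quasi-triangle inequality and $S\subset B(x_S,C_1 l(S))$, guarantees both the admissibility of the smoothness condition $d(x,x_S)\leq (2A_0)^{-1}d(x_S,y)$ \emph{and} the equivalence $d(x_S,y)\approx l(S)+\operatorname{dist}(y,S)$. This is where the constants $c_1,C_1$ and the implicit separation factor enter; in the applications, since one ultimately only needs $r$-good cubes, the separation can be arranged to be arbitrarily large by choice of $r$, so this is purely bookkeeping rather than a substantive obstacle.
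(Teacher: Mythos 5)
Your proposal is correct and follows essentially the same route as the paper's proof: both exploit the $v$-cancellation of $H_S$ to insert the reference point $x_S$ (you subtract $T(u\mathbf{1}_{\hat Q\setminus Q'})(x_S)$ outside the kernel integral, the paper subtracts $\mathfrak K(x_S,y)$ inside via Fubini — the same computation), then apply the kernel H\"older estimate \eqref{smooth of C-Z-S-I-O} with the geometric equivalence $d(x_S,y)\approx l(S)+\operatorname{dist}(y,S)$ and doubling to produce $K(S,\mathbf{1}_{\hat Q\setminus Q'}u)$, and finish with Cauchy--Schwarz on $S$ in $L^2(v)$. The bookkeeping caveat you flag about arranging $d(x,x_S)\le (2A_0)^{-1}d(x_S,y)$ is handled implicitly in the paper as well, so there is no substantive difference.
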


The $L^{2}$ formulation of \eqref{e:4.4} proves useful in many estimates
below, in particular in several Carleson Embedding Theorem estimates,
Theorem \ref{t:estimate}. We will apply \eqref{e:4.4} in the dual formulation. Namely, we
have 
\begin{equation}
\Vert T(u1_{\hat{Q}\setminus Q^{\prime }})-\mathbf{E}_{S}^{v}T(1_{\hat{Q}%
\setminus Q^{\prime }}u)\Vert _{L^{2}(S,v)}\lesssim \Phi (S,1_{\hat{Q}\setminus
Q^{\prime}} u)^{\frac{1}{2}}.
\label{e:l2form}
\end{equation}

\begin{proof}
This proof uses the standard computation in the Calder\'on--Zygmund theory.
We use cancellation of a function to pull additional information onto the
kernel of the operator. Using Fubini and the fact that $H_{S}$ has $v$
integral zero we get: 
\begin{eqnarray*}
|\langle T(u 1_{\hat{Q}\setminus Q^{\prime }}),H_{S}\rangle_{v}| & = &
\left|\int_S H_S(x) T(u 1_{\hat{Q}\setminus Q^{\prime }}(x))dv(x)\right| \\
& = & \left|\int_{S}H_{S}(x)\int_{\hat{Q}\setminus Q^{\prime }}\mathfrak K(x,y)du(y)
dv(x)\right| \\
& = & \left|\int_{\hat{Q}\setminus Q^{\prime }}
\int_{S}\left(\mathfrak K(x,y)-\mathfrak K(x_S,y)\right)H_{S}(x)dv(x)du(y)\right| \\
& \leq & \int_{\hat{Q}\setminus Q^{\prime }}\int_{S}\left(\frac{\func{dist}%
(x,x_S)}{\func{dist}(y,x_S)}\right)^{\kappa}\frac{1}{\mu(B(x_S,\func{dist}%
(x_S,y)))}|H_{S}(x)|dv(x)du(y) \\
& \leq & \int_{\hat{Q}\setminus Q^{\prime }}\left(\frac{l(S)}{\func{dist}%
(x_S,y)}\right)^{\kappa}\frac{1}{\mu(B(x_S,\func{dist}(x_S,y)))}%
\int_{S}|H_{S}(x)|dv(x)du(y).
\end{eqnarray*}

Here $x_S\in S$, is the center of $S$ and $y\in \hat{Q}\setminus
Q^{\prime }$ and so $\func{dist}(x_S,y) \approx \func{dist}(y,S) + l(S)$. By
the doubling property of the measure $\mu$ we have $\mu(B(x_S,\func{dist}%
(x_S,y)))\approx \mu(B(x_S,l(S) + \func{dist}(y,S)))$. These estimates can
then be used to give: 
\begin{eqnarray*}
&&|\langle T(u1_{\hat{Q}\setminus Q^{\prime }}, H_S\rangle_{v}| \\
& \lesssim & \int_{\hat{Q}\setminus Q^{\prime }}\left(\frac{l(S)}{l(S)+\func{dist%
}(y,S)}\right)^{\kappa} \frac{1}{\mu(B(x_S,l(S) + \func{dist}(y,S)))}%
du(y)\|H_S\|_{L^2(v)}v(S)^\frac{1}{2} \\
& = &\|H_S\|_{L^2(v)}\Phi(S,u1_{\hat{Q}\setminus Q^{\prime}})^\frac{1}{2}
\end{eqnarray*}
completing the proof.
\end{proof}

The next Lemma is an extension to spaces of homogenous type of the Poisson inequality in \cite{Vol}.  This lemma plays a crucial role in obtaining geometric decay from goodness in controlling $A_3^{1}$ appearing below, and the neighbor and the stopping form appearing later in Section \ref{s6}.

\begin{lemma}
Let $0<\lambda<\frac{\kappa}{n+\kappa}$.
\label{l:4.4} If $S\subset Q\subset \hat{Q}$ and $\func{dist}(S,e(Q))\geq 
\frac{1}{2}l(S)^{\lambda }l(Q)^{1-\lambda }$ where $e(S):=\partial {S}\cup \{(%
\text{center of }S)\}$ then
\end{lemma}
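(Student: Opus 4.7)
My plan is to establish the stated geometric decay by a pointwise comparison of the integrands defining $K(S,1_{\hat Q\setminus Q}u)$ and $K(Q,1_{\hat Q\setminus Q}u)$, followed by integration. For $y\in \hat Q\setminus Q$ set $R_S(y):=l(S)+\func{dist}(y,S)$ and $R_Q(y):=l(Q)+\func{dist}(y,Q)$. Since $S\subset Q$, one has $\func{dist}(y,S)\geq \func{dist}(y,Q)$; the hypothesis $\func{dist}(S,e(Q))\geq \tfrac12 l(S)^{\lambda}l(Q)^{1-\lambda}$ forces $\func{dist}(y,S)\geq \tfrac12 l(S)^{\lambda}l(Q)^{1-\lambda}$; and the quasi-triangle inequality gives $\func{dist}(y,S)\leq \func{dist}(y,Q)+\operatorname{diam}(Q)\lesssim R_Q(y)$. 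Splitting into the cases $\func{dist}(y,Q)\geq l(Q)$ (where $R_S(y)\approx R_Q(y)\approx \func{dist}(y,Q)$) and $\func{dist}(y,Q)<l(Q)$ (where $R_Q(y)\approx l(Q)$ while $R_S(y)\gtrsim l(S)^{\lambda}l(Q)^{1-\lambda}$), in either case
\[
\frac{R_Q(y)}{R_S(y)}\lesssim \left(\frac{l(Q)}{l(S)}\right)^{\lambda}.
\]

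Next I would compare the denominators in the two integrands using the doubling condition. Because $x_S,x_Q\in Q$, we have $d(x_S,x_Q)\leq \operatorname{diam}(Q)\lesssim l(Q)\leq R_Q(y)$, hence $B(x_Q,R_Q(y))\subset B(x_S,C R_Q(y))$. Applying the upper-dimension bound $\mu(B(x,mr))\leq C_\mu m^n\mu(B(x,r))$ from Section~\ref{s1} at the center $x_S$ with $r=R_S(y)$ and $m\approx R_Q(y)/R_S(y)$ yields
\[
\frac{\mu(B(x_Q,R_Q(y)))}{\mu(B(x_S,R_S(y)))}\lesssim \left(\frac{R_Q(y)}{R_S(y)}\right)^{n}.
\]
Combining this with the factor $(l(S)/l(Q))^{\kappa}(R_Q(y)/R_S(y))^{\kappa}$ coming from the $\kappa$-th power in the definition of $K$, the pointwise ratio of the two integrands is bounded by
\[
\left(\frac{l(S)}{l(Q)}\right)^{\kappa}\left(\frac{R_Q(y)}{R_S(y)}\right)^{n+\kappa}\lesssim \left(\frac{l(S)}{l(Q)}\right)^{\kappa-\lambda(n+\kappa)}.
\]
The hypothesis $\lambda<\kappa/(n+\kappa)$ is exactly what makes the exponent $\tau:=\kappa-\lambda(n+\kappa)$ strictly positive. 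Integrating against $du$ over $\hat Q\setminus Q$ delivers the desired decay
\[
K(S,1_{\hat Q\setminus Q}u)\lesssim \left(\frac{l(S)}{l(Q)}\right)^{\tau} K(Q,1_{\hat Q\setminus Q}u),
\]
from which the $\Phi$-form follows via $v(S)\leq v(Q)$ if needed.

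The main obstacle is the uniform control of the measure ratio, because $R_S(y)$ can range across many scales relative to $l(S)$; using doubling at the single center $x_S$ and the inclusion $B(x_Q,R_Q(y))\subset B(x_S,CR_Q(y))$ circumvents this without any case analysis beyond what is already needed for the geometric $R_Q/R_S$ bound. The appearance of the center of $Q$ in the hypothesis $\func{dist}(S,e(Q))$ strongly suggests that a companion inner-region estimate (with a small ball around $x_Q$ replacing $\hat Q\setminus Q$) is part of the statement; that version would follow by exactly the same case analysis, now using $\func{dist}(y,S)\geq \func{dist}(S,x_Q)-d(y,x_Q)$ when $y$ is close to $x_Q$, so the same exponent $\tau$ is produced.
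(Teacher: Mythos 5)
Your proposal is correct, and it takes a genuinely different route from the paper. You argue by a single pointwise comparison of the two integrands: for each $y\in\hat Q\setminus Q$ you bound the ratio $R_Q(y)/R_S(y)$ by $(l(Q)/l(S))^\lambda$ (using that the hypothesis forces $\func{dist}(y,S)\gtrsim l(S)^\lambda l(Q)^{1-\lambda}$ when $\func{dist}(y,Q)<l(Q)$, while $R_S\approx R_Q$ when $\func{dist}(y,Q)\geq l(Q)$), and you control the measure quotient via $B(x_Q,R_Q(y))\subset B(x_S,CR_Q(y))$ together with one application of the upper-dimension bound. The product $(l(S)/l(Q))^\kappa(R_Q/R_S)^{n+\kappa}$ then collapses to $(l(S)/l(Q))^{\kappa-\lambda(n+\kappa)}$ uniformly in $y$, and integration finishes. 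The paper instead decomposes $\hat Q\setminus Q$ into dyadic annuli $\delta^{-k}B(S)\setminus\delta^{1-k}B(S)$, observes that the hypothesis kills all annuli with $\delta^k\gtrsim(l(S)/l(Q))^{1-\lambda}$, and then splits the surviving annuli into those below scale $l(Q)$ (collapsed onto the ball $B(Q)$ by doubling) and those above (regrouped to match the annuli of $K(Q,\cdot)$). Both approaches hinge on the same geometric consequence of the hypothesis and on the upper-dimension bound, but yours avoids the dyadic partition and the two-regime bookkeeping, and makes the emergence of the exponent $\kappa-\lambda(n+\kappa)=-\sigma_0$ transparent as a simple exponent count. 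A small caveat shared with the paper: passing from $\func{dist}(S,e(Q))$ to a lower bound on $\func{dist}(y,S)$ for $y\notin Q$ tacitly uses that a point outside $Q$ lies at distance $\gtrsim\func{dist}(S,\partial Q)$ from any point of $S$, which relies on the geometry of the dyadic cubes (property (3) of Theorem \ref{theorem dyadic}) rather than being automatic in a general quasi-metric space. You also implicitly use $R_S\lesssim R_Q$ to invoke the doubling estimate with scaling factor $\geq 1$; this follows from $\func{dist}(y,S)\leq A_0(\func{dist}(y,Q)+\operatorname{diam}Q)$, but it is worth stating since it is what licenses replacing $\max(1,R_Q/R_S)$ by $R_Q/R_S$. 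Your observation that only $\partial Q$, not the center in $e(Q)$, is used here is also accurate.
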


\begin{equation*}
l(S)^{\sigma _{0}}K\left( S,1_{\hat{Q}\setminus Q}u\right) \leq l(Q)^{\sigma
_{0}}K\left( Q,1_{\hat{Q}\setminus Q}u\right) .
\end{equation*}%
Here $\sigma _{0}:=\lambda (n+\kappa )-\kappa $ with $\kappa$ as in Definition \ref{def 1}.

\begin{proof}
To begin with, recall that for each $Q\in \mathcal{D}$, the containment in %
\eqref{eq:contain} holds and the outer ball that contains $S$ is denoted by $%
B(S)$.

By decomposing the space $X$ into annuli based on $B(S)$, we have that 
\begin{align*}
& K\left( S,1_{\hat{Q}\setminus Q}u\right) \\
& =\int_{B(S)}\left( \frac{l(S)}{l(S)+\func{dist}(y,S)}\right) ^{\kappa }%
\frac{1}{\mu (B(x_{S},l(S)+\func{dist}(y,S)))}1_{\hat{Q}\setminus Q}(y)du(y)
\\
& \quad +\sum_{k=1}^{\infty }\int_{\delta ^{-k}B(S)\backslash \delta
^{1-k}B(S)}\left( \frac{l(S)}{l(S)+\func{dist}(y,S)}\right) ^{\kappa }\frac{1%
}{\mu (B(x_{S},l(S)+\func{dist}(y,S)))}1_{\hat{Q}\setminus Q}(y)du(y) \\
& =\sum_{k=\kappa _{0}}^{\kappa _{1}}\int_{\delta ^{-k}B(S)\backslash \delta
^{1-k}B(S)}\left( \frac{l(S)}{l(S)+\func{dist}(y,S)}\right) ^{\kappa }\frac{1%
}{\mu (B(x_{S},l(S)+\func{dist}(y,S)))}1_{\hat{Q}\setminus Q}(y)du(y).
\end{align*}%
Here $\kappa_0$ and $\kappa_1$ are determined by the conditions that for $k<\kappa_0$
\begin{equation*}
\delta ^{-k}B(S)\cap (\hat{Q}\setminus Q)=\emptyset,
\end{equation*}%
and for $k>\kappa _{1}$, we have 
\begin{equation*}
\delta ^{1-k}B(S)\cap (\hat{Q}\setminus Q)=\emptyset .
\end{equation*}%
Hence, for $\kappa_0\leq k\leq \kappa _{1}$, we have 
\begin{equation*}
\frac{1}{2}l(S)^{\lambda }l(Q)^{1-\lambda }\leq \func{dist}(S,e(Q))\lesssim
\delta ^{-k}l(S),
\end{equation*}%
and thus 
\begin{equation*}
\delta ^{k}\lesssim \left( {\frac{l(S)}{l(Q)}}\right) ^{1-\lambda }.
\end{equation*}

We now estimate $ K\left( S,1_{\hat{Q}\setminus Q}u\right)$. To begin with, we give a partition between $\kappa _{0}$ and $\kappa _{1}$ to create a new collection of integers $\tilde{\kappa}_{0},%
\tilde{\kappa}_{1},\ldots ,\tilde{\kappa}_{L}$ such that $\tilde{\kappa}%
_{0}=\kappa _{0}$, $\tilde{\kappa}_{L}=\kappa _{1}$ and that $\tilde{\kappa}%
_{1}$ satisfies $\delta ^{-\tilde{\kappa}_{1}}l(S)\approx l(Q)$, $\tilde{%
\kappa}_{2}$ satisfies $\delta ^{-\tilde{\kappa}_{2}}l(S)\approx \delta l(Q)$, $%
\ldots $ , $\tilde{\kappa}_{L}$ satisfies $\delta ^{-\tilde{\kappa}%
_{L}}l(S)\approx \delta ^{-L}l(Q)$. In fact, we have $\tilde{\kappa}%
_{0}=\kappa _{0}$, $\tilde{\kappa}_{1}>\tilde{\kappa}_{0}$, $\tilde{\kappa}%
_{2}=\tilde{\kappa}_{1}+1$, and so on. Then we get that
$$ \sum_{k=\kappa _{0}}^{\kappa _{1}}= \sum_{\ell =1}^{L}\sum_{k=\tilde{\kappa}_{\ell -1}}^{\tilde{\kappa%
}_{\ell }}.$$
Hence, 
\begin{align*}
& K\left( S,1_{\hat{Q}\setminus Q}u\right) \\
& \lesssim \sum_{\ell =1}^{L}\sum_{k=\tilde{\kappa}_{\ell -1}}^{\tilde{\kappa%
}_{\ell }}\int_{\delta ^{-k}B(S)\backslash \delta ^{1-k}B(S)}\left( \frac{%
l(S)}{l(S)+\func{dist}(y,S)}\right) ^{\kappa }\frac{1}{\mu (B(x_{S},l(S)+%
\func{dist}(y,S)))}1_{\hat{Q}\setminus Q}(y)du(y) \\
& \lesssim \sum_{\ell =1}^{L}\sum_{k=\tilde{\kappa}_{\ell -1}}^{\tilde{\kappa%
}_{\ell }}\int_{(\delta ^{-k}B(S)\backslash \delta ^{1-k}B(S))\cap (\hat{Q}%
\setminus Q)}\left( \frac{l(S)}{\delta ^{1-k}l(S)}\right) ^{\kappa }\frac{1}{%
\mu (B(x_{S},\delta ^{1-k}l(S)))}du(y) \\
& \lesssim \sum_{k=\tilde{\kappa}_{0}}^{\tilde{\kappa}_{1}}\delta ^{\kappa
k}\int_{(\delta ^{-k}B(S)\backslash \delta ^{1-k}B(S))\cap (\hat{Q}\setminus
Q)}\frac{1}{\mu (B(x_{S},\delta ^{1-k}l(S)))}du(y) \\
& \quad +\sum_{\ell =2}^{L}\delta ^{\kappa (\tilde{\kappa}_{1}+\ell
)}\int_{(\delta ^{-(\tilde{\kappa}_{1}+\ell )}B(S)\backslash \delta ^{(1-%
\tilde{\kappa}_{1}-\ell )}B(S))\cap (\hat{Q}\setminus Q)}\frac{1}{\mu
(B(x_{S},\delta ^{1-\tilde{\kappa}_{1}-\ell }l(S)))}du(y) \\
& =:\operatorname{Term}_{1}+\operatorname{Term}_{2}.
\end{align*}%
For the first term, by using the doubling property, we have that 
\begin{align*}
\operatorname{Term}_{1}& \lesssim \sum_{k=\tilde{\kappa}_{0}}^{\tilde{\kappa}_{1}}\delta
^{\kappa k}\frac{\mu (B(x_{S},l(Q)))}{\mu (B(x_{S},\delta ^{1-k}l(S)))}\frac{%
1}{\mu (B(x_{S},l(Q)))}u\left[ (\delta ^{-k}B(S)\backslash \delta
^{1-k}B(S))\cap (\hat{Q}\setminus Q)\right] \\
& \lesssim \sum_{k=\tilde{\kappa}_{0}}^{\tilde{\kappa}_{1}}\delta ^{\kappa
k}\left( \frac{l(Q)}{\delta ^{-k}l(S)}\right) ^{n}\frac{1}{\mu
(B(x_{Q},l(Q)))}u\left[ (\delta ^{-k}B(S)\backslash \delta ^{1-k}B(S))\cap (%
\hat{Q}\setminus Q)\right] \\
& \lesssim \sum_{k=\tilde{\kappa}_{0}}^{\tilde{\kappa}_{1}}\delta ^{\kappa
k}\delta ^{kn}\left( \frac{l(Q)}{l(S)}\right) ^{n}\frac{1}{\mu
(B(x_{Q},l(Q)))}u\left[ (\delta ^{-k}B(S)\backslash \delta ^{1-k}B(S))\cap (%
\hat{Q}\setminus Q)\right] \\
& \lesssim \sum_{k=\tilde{\kappa}_{0}}^{\tilde{\kappa}_{1}}\delta ^{\kappa
k}\left( \frac{l(Q)}{l(S)}\right) ^{n\lambda }\frac{1}{\mu (B(x_{Q},l(Q)))}u%
\left[ (\delta ^{-k}B(S)\backslash \delta ^{1-k}B(S))\cap (\hat{Q}\setminus
Q)\right] \\
& \lesssim \delta ^{\kappa \tilde{\kappa}_{0}}\left( \frac{l(Q)}{l(S)}%
\right) ^{n\lambda }\frac{1}{\mu (B(x_{Q},l(Q)))}u\left[ B(Q)\cap (\hat{Q}%
\setminus Q)\right] \\
& \lesssim \left( \frac{l(Q)}{l(S)}\right) ^{\lambda (n+\kappa )-\kappa
}\int_{B(Q)}\frac{1}{\mu (B(x_{Q},l(Q)))}1_{\hat{Q}\setminus Q}(y)du(y).
\end{align*}%
For the second term, we get that 
\begin{align*}
\operatorname{Term}_{2}& \lesssim \delta ^{\kappa \tilde{\kappa}_{1}}\sum_{\ell
=2}^{L}\delta ^{\kappa \ell }\frac{1}{\mu (B(x_{S},\delta ^{-\ell }l(Q)))}u%
\left[ (\delta ^{-(\tilde{\kappa}_{1}+\ell )}B(S)\backslash \delta ^{1-%
\tilde{\kappa}_{1}-\ell }B(S))\cap (\hat{Q}\setminus Q)\right] \\
& \lesssim \delta ^{\kappa \tilde{\kappa}_{1}}\sum_{\ell =2}^{L}\delta
^{\kappa \ell }\frac{1}{\mu (B(x_{Q},\delta ^{-\ell }l(Q)))}u\left[ (\delta
^{-\ell }B(Q)\backslash \delta ^{1-\ell }B(Q))\cap (\hat{Q}\setminus Q)%
\right] \\
& \lesssim \delta ^{\kappa \tilde{\kappa}_{1}}\sum_{\ell =2}^{L}\int_{\delta
^{-\ell }B(Q)\backslash \delta ^{1-\ell }B(Q)}\left( \frac{l(Q)}{\delta
^{-\ell }l(Q)}\right) ^{\kappa }\frac{1}{\mu (B(x_{Q},\delta ^{-\ell }l(Q)))}%
1_{\hat{Q}\setminus Q}(y)du(y).
\end{align*}%

Next, we note that 
\begin{align*}
& K\left( Q,1_{\hat{Q}\setminus Q}u\right) \\
& =\int_{B(Q)}\left( \frac{l(Q)}{l(Q)+\func{dist}(y,Q)}\right) ^{\kappa }%
\frac{1}{\mu (B(x_{Q},l(Q)+\func{dist}(y,Q)))}1_{\hat{Q}\setminus Q}(y)du(y)
\\
& \quad +\sum_{j=1}^{j_{1}}\int_{\delta ^{-j}B(Q)\backslash \delta
^{1-j}B(Q)}\left( \frac{l(Q)}{l(Q)+\func{dist}(y,Q)}\right) ^{\kappa }\frac{1%
}{\mu (B(x_{Q},l(Q)+\func{dist}(y,Q)))}1_{\hat{Q}\setminus Q}(y)du(y) \\
& \approx \int_{B(Q)}\frac{1}{\mu (B(x_{Q},l(Q)))}1_{\hat{Q}\setminus
Q}(y)du(y) \\
& \quad +\sum_{j=1}^{j_{1}}\int_{\delta ^{-j}B(Q)\backslash \delta
^{1-j}B(Q)}\left( \frac{l(Q)}{\delta ^{-j}l(Q)}\right) ^{\kappa }\frac{1}{%
\mu (B(x_{Q},\delta ^{-j}l(Q)))}1_{\hat{Q}\setminus Q}(y)du(y).
\end{align*}

Combining the estimates of $\operatorname{Term}_{1}$ and $\operatorname{Term}_{2}$, and the equality above, we see that 
\begin{equation*}
K\left( S,1_{\hat{Q}\setminus Q}u\right) \lesssim \left( \frac{l(Q)}{l(S)}%
\right) ^{\lambda (n+\kappa )-\kappa }K\left( Q,1_{\hat{Q}\setminus
Q}u\right) .
\end{equation*}%
The proof of Lemma \ref{l:4.4} is complete.
\end{proof}

Let us begin with the proof of Lemma \ref{l:4.2}.

\begin{proof}[Proof of Lemma \ref{l:4.2}]
Recall that the pairs of cubes $(Q,S)\in \mathcal{A}_{2}^{2}$ satisfy $%
l(S)\leq l(Q)$ and $\func{dist}(Q,S)\geq l(Q)$. We can apply Lemma \ref%
{l:4.3} to $\langle T(uh_{Q}^{\epsilon }),h_{S}^{k}\rangle _{v}$. To see
this note that $h_{Q}^{\epsilon }$ is constant on each child $Q_{\epsilon }$
where $\epsilon \in \{1,2....,M_{Q-1}\}$.

Take a child $Q_{\epsilon}$ and apply Lemma \ref{l:4.3} with the largest cube $%
\hat{Q}$ taken to be $\hat{Q} := \func{hull}\left[Q_{\epsilon},(\frac{l(Q)}{l(S)}%
)^{1-\lambda}S\right]$. Here $\rho S$ means the cube with same centre as $S$ and
length equal to $\rho l(S)$ and by $\func{hull}$ above we mean $\hat{Q}$ is
the smallest cube containing both cubes $Q_{\epsilon}$ and $\left(\frac{l(Q)}{l(S)}%
\right)^{1-\lambda}S$.

The two cubes $Q_{\epsilon}$ and $\left(\frac{l(Q)}{l(S)}%
\right)^{1-\lambda}S$ are disjoint. We take $Q^{\prime }\subset \hat{Q}$ so
that $\hat{Q}\setminus Q^{\prime }= Q_{\epsilon}$. Then using Lemma \ref{l:4.3}
we get the following estimate 
\begin{equation*}
\beta(Q,S) := \left|\sum_{\epsilon} \langle T(1_{Q_{\epsilon}}
h_Q^{\epsilon} u), h_S^k\rangle_v\right| \leq \sum\limits_{\epsilon}|\mathbf{E}%
_{Q_{\epsilon}}^u (h_Q^{\epsilon})| |\langle T(1_{Q_{\epsilon}} u),
h_S^k\rangle_v|.
\end{equation*}
Here $\mathbf{E}_{Q_{\epsilon}}^u (h_Q^{\epsilon}) := \frac{1}{u(Q_{\epsilon})}%
\int_{Q_{\epsilon}} h_Q^{\epsilon}(x) du(x).$ Observe $|\mathbf{E}%
_{Q_{\epsilon}}^u (h_Q^{\epsilon})| \leq \frac{1}{u(Q_{\epsilon})^{\frac{1}{2}}}$.

\vspace{0.5 cm}

\noindent We have $K(S,1_{Q_{\epsilon}}u) \lesssim \frac{l(S)^{\kappa}}{(l(S) +%
\func{dist}(Q_i,S))^{\kappa + n} } u(Q_{\epsilon})$, where $n$ is the upper dimension of $\mu$ and we have used the doubling property of the measure $\mu$. 
Hence we get 
\begin{equation*}
\beta(Q,S) \lesssim \sum\limits_{\epsilon} v(S)^{1/2} u(Q_{\epsilon})^{1/2} 
\frac{l(S)^{\kappa}}{(l(S) +\func{dist}(Q,S))^{\kappa + n}}.
\end{equation*}

To continue, we may assume that $\Vert f\Vert _{L^{2}(u)}=\Vert g\Vert
_{L^{2}(v)}=1$. We then estimate 
\begin{equation*}
|A_{2}^{2}|=\left\vert \sum\limits_{(Q,S)\in \mathcal{A}_{2}^{2}}\sum%
\limits_{\epsilon =1}^{M_{Q}-1}\sum\limits_{k=1}^{M_{S}-1}\langle
f,h_{Q}^{\epsilon }\rangle _{u}\langle T(uh_{Q}^{\epsilon
}),h_{S}^{k}\rangle _{v}\langle g,h_{S}^{k}\rangle _{v}\right\vert
\end{equation*}%
which can be written as follows
\begin{eqnarray*}
|A_2^2| & \leq & \sum\limits_Q \sum\limits_{\substack{ S:l(S) \leq l(Q)  \\ 
\func{dist}(Q,S)\geq l(Q) }} \sum\limits_{\epsilon =
1}^{M_{Q}-1}\sum\limits_{k = 1}^{M_{S}-1}|\langle f, h_Q^{\epsilon} \rangle_u
|\beta(Q,S) |\langle g, h_S^k\rangle_v| \\
& \lesssim & \sum\limits_Q \sum\limits_{\substack{ S:l(S) \leq l(Q)  \\ 
\func{dist}(Q,S)\geq l(Q) }} \sum\limits_{\epsilon =
1}^{M_{Q}-1}\sum\limits_{k = 1}^{M_{S}-1}|\langle f, h_Q^{\epsilon} \rangle_u
| u(Q)^{1/2} \frac{l(S)^{\kappa}}{(l(S) + \func{dist}(Q,S))^{\kappa + n}}
v(S)^{1/2}|\langle g, h_S^k\rangle_v| \\
& \lesssim & \sum\limits_Q \sum\limits_{\epsilon = 1}^{M_{Q}-1}|\langle f,
h_Q^{\epsilon} \rangle_u |^2 \sum\limits_{\substack{ S:l(S) \leq l(Q)  \\ 
\func{dist}(Q,S)\geq l(Q) }}\left(\frac{l(S)}{l(Q)}\right)^{-\sigma}
u(Q)^{1/2} \frac{l(S)^{\kappa}}{(l(S) + \func{dist}(Q,S))^{\kappa + n}}
v(S)^{1/2} \\
& & + \sum\limits_S \sum\limits_{k = 1}^{M_{S}-1}|\langle g, h_S^k \rangle_v
|^2 \sum\limits_{\substack{ Q:l(S) \leq l(Q)  \\ \func{dist}(Q,S)\geq l(Q) }}%
\left(\frac{l(S)}{l(Q)}\right)^{\sigma} u(Q)^{1/2} \frac{l(S)^{\kappa}}{%
(l(S) + \func{dist}(Q,S))^{\kappa + n}} v(S)^{1/2}\\
&=:& A_{21}^2+A_{22}^2,
\end{eqnarray*}
\noindent where in the last inequality we have inserted the gain and loss term $\left(\frac{l(S)}{l(Q)}%
\right)^{\pm \sigma}$ with $0 < \sigma < 1$. 

We first consider the term $A_{22}^2$. For each fixed $Q$ we have 
\begin{align*}
&\sum\limits_{\substack{ S:l(S) \leq l(Q)  \\ \func{dist}(Q,S)\geq l(Q) }}%
\left(\frac{l(S)}{l(Q)}\right)^{\sigma} u(Q)^{1/2} \frac{l(S)^{\kappa}}{%
(l(S) + \func{dist}(Q,S))^{\kappa + n}} v(S)^{1/2} \\
& \lesssim u(Q)^{1/2}\sum\limits_{i = 0}^{\infty} \delta^{i\sigma}\left(\sum 
_{\substack{ S:l(S) = \delta^i l(Q)  \\ \func{dist}(Q,S)\geq l(Q) }} \frac{%
l(S)^{\kappa}}{( \func{dist}(Q,S))^{\kappa + n}} v(S)\right)^{1/2}\left(\sum%
\limits_{\substack{ S:l(S) = \delta^i l(Q)  \\ \func{dist}(Q,S)\geq l(Q) }}%
\frac{l(S)^{\kappa}}{( \func{dist}(Q,S))^{\kappa+ n}} \right)^\frac{1}{2} \\
& \lesssim \sum\limits_{i= 0}^{\infty}\delta^{i\sigma}\left(\frac{u(Q)}{l(Q)^n}%
K(Q,v)\right)^\frac{1}{2} \\
&\lesssim \mathcal{A}_{2},
\end{align*}
\vspace{0.25 cm} where the last inequality follows from the fact that $\sigma > 0$.  Consider the term $A_{21}^2$. For each fixed $S$ we have

\begin{align*}
&\sum\limits_{\substack{ Q:l(S) \leq l(Q)  \\ \func{dist}(Q,S)\geq l(Q) }}%
\left(\frac{l(S)}{l(Q)}\right)^{-\sigma} u(Q)^{1/2} \frac{l(S)^{\kappa}}{%
(l(S) + \func{dist}(Q,S))^{\kappa + n}} v(S)^{1/2} \\
&\lesssim v(S)^\frac{1}{2}\sum\limits_{j=0}^{\infty}\delta^{j(1 -\sigma)}
\sum\limits_{\substack{ Q:l(S) = \delta^j l(Q)  \\ \func{dist}(Q,S)\geq l(Q) 
}}\frac{l(Q)^{\kappa}}{( \func{dist}(Q,S))^{\kappa + n}} u(Q)^\frac{1}{2} \\
&\lesssim v(S)^\frac{1}{2}\sum\limits_{j=0}^{\infty}\delta^{j(1 -\sigma)}
\left(\sum\limits_{\substack{ Q:l(S) = \delta^j l(Q)  \\ \func{dist}%
(Q,S)\geq l(Q) }}\frac{l(Q)^{\kappa}}{( \func{dist}(Q,S))^{\kappa + n}}
u(Q)\right)^\frac{1}{2}\left(\sum\limits_{\substack{ Q:l(S) = \delta^j l(Q) 
\\ \func{dist}(Q,S)\geq l(Q) }}\frac{l(Q)^{\kappa}}{( \func{dist}%
(Q,S))^{\kappa + n}}\right)^\frac{1}{2},
\end{align*}
which is bounded by 
\begin{equation*}
v(S)^{\frac{1}{2}}\sum\limits_{j=0}^{\infty }\delta ^{j(1-\sigma )}K(\delta
^{j/n}S,u)\left( \frac{1}{l(\delta ^{j/n}S)^{n}}\right) ^{\frac{1}{2}%
}\lesssim \mathcal{A}_{2},
\end{equation*}%
where the last inequality follows from the fact that $\sigma <1$. Thus, with any fixed $0<\sigma <1$ we have from the above inequalities
that 
\begin{eqnarray*}
|A_{2}^{2}| &\lesssim &\mathcal{A}_{2}\sum\limits_{Q}\sum\limits_{\epsilon
=1}^{M_{Q}-1}|\langle f,h_{Q}^{\epsilon }\rangle _{u}|^{2}+\mathcal{A}%
_{2}\sum\limits_{S}\sum_{k=1}^{M_{S}-1}|\langle g,h_{S}^{k}\rangle _{v}|^{2}
\\
&=&\left( \Vert f\Vert _{L^{2}(u)}^{2}+\Vert g\Vert _{L^{2}(v)}^{2}\right) 
\mathcal{A}_{2}=2\mathcal{A}_{2}.
\end{eqnarray*}

The proof of Lemma \ref{l:4.2} is complete.
\end{proof}

We have now reduced matters to the case of considering $\mathcal{A}_{3}^{2}$. We
further decompose $\mathcal{A}_{3}^{2}$ into 
\begin{align*}
\mathcal{A}_{1}^{3}& :=\{(Q,S)\in \mathcal{A}_{1}^{1}:l(S)\leq \delta
^{r}l(Q),Q\cap S=\emptyset ,\func{dist}(Q,S)\leq l(Q)\}, \\
\mathcal{A}_{2}^{3}& :=\{(Q,S)\in \mathcal{A}_{1}^{1}:l(S)\leq \delta
^{r}l(Q),Q\cap S\neq \emptyset ,\func{dist}(Q,S)\leq l(Q)\}.
\end{align*}%
We are now going to prove in this section that 
\begin{equation*}
|A_{1}^{3}|\leq \mathcal{A}_{2}\Vert f\Vert _{L^{2}(u)}\Vert g\Vert
_{L^{2}(v)}.
\end{equation*}

\vspace{0.5cm}

\noindent Fix $p\geq r$. Observe that 
\begin{eqnarray*}
A_{1}^{3}(p) &=&\left\vert \sum\limits_{Q}\sum\limits_{\substack{ S:{\delta }%
^{-p}l(S)=l(Q)  \\ \func{dist}(Q,S)\leq l(Q)  \\ Q\cap S=\emptyset }}%
\sum\limits_{\epsilon =1}^{M_{Q}-1}\sum\limits_{k=1}^{M_{S}-1}\langle
f,h_{Q}^{\epsilon }\rangle _{u}\langle T(uh_{Q}^{\epsilon
}),h_{S}^{k}\rangle _{v}\langle g,h_{S}^{k}\rangle _{v} \right\vert\\
&\lesssim &\left[ \sum_{Q}\sum_{\epsilon =1}^{M_{Q}-1}\left\vert \langle
f,h_{Q}^{\epsilon }\rangle _{u}\right\vert^{2}\right] ^{\frac{1}{2}}\left[
\sum_{Q}\sum_{\epsilon =1}^{M_{Q}-1}\left[ \sum\limits_{\substack{ S:{\delta 
}^{-p}l(S)=l(Q)  \\ \func{dist}(Q,S)\leq l(Q)  \\ Q\cap S=\emptyset }}%
\sum_{k=1}^{M_{S}-1}\left\vert \langle T(uh_{Q}^{\epsilon }),h_{S}^{k}\rangle _{v}\right\vert \left\vert \langle
g,h_{S}^{k}\rangle _{v}\right\vert\right] ^{2}\right] ^{\frac{1}{2}} \\
&\lesssim &\Lambda (p)\Vert f\Vert _{L^{2}(u)}\Vert g\Vert _{L^{2}(v)}.
\end{eqnarray*}%
The last two inequalities follow from the Cauchy-Schwarz inequality and we
use Fubini to get $\Vert g\Vert _{L^{2}(v)}$ above and $\delta ^{-p}$ in the
expression below. Here

\begin{equation*}
\Lambda (p)^{2}:=\delta ^{-p}\sup_{Q}\sum\limits_{\substack{ S:{\delta }%
^{-p}l(S)=l(Q)  \\ \func{dist}(Q,S)\leq l(Q)  \\ Q\cap S=\emptyset }}%
\sum_{\epsilon =1}^{M_{Q}-1}\sum_{k=1}^{M_{S}-1}|\langle T(uh_{Q}^{\epsilon
}),h_{S}^{k}\rangle _{v}|^{2}.
\end{equation*}%
But $S$ is good, so that Lemma \ref{l:4.3} applies to each child of $Q$ and $%
S$ yielding 
\begin{equation*}
\Lambda (p)^{2}\lesssim \sup\limits_{Q}\delta ^{-p}\sum\limits_{\epsilon
}\sum\limits_{k}\sum\limits_{\substack{ S:{\delta }^{-p}l(S)=l(Q)  \\ \func{%
dist}(Q,S)\leq l(Q)  \\ Q\cap S=\emptyset }}\frac{v(S_{k})}{u(Q_{\epsilon })}%
K(S_{k},1_{Q_{\epsilon }}u)^{2}.
\end{equation*}

Hence we have the following, using Lemma \ref{l:4.4} 
\begin{eqnarray*}
\Lambda (p)^{2} &\lesssim &\sup\limits_{Q}\delta ^{-p}\sum\limits_{\epsilon
}\sum\limits_{k}\sum\limits_{\substack{ S:{\delta }^{-p}l(S)=l(Q)  \\ \func{%
dist}(Q,S)\leq l(Q)  \\ Q\cap S=\emptyset }}\frac{v(S_{k})}{u(Q_{\epsilon })}%
.\left( \frac{l(S)}{l(Q)}\right) ^{-2\sigma _{0}}\cdot K(Q,1_{Q_{\epsilon
}}u)^{2} \\
&\lesssim &\sup\limits_{Q}\delta ^{-p}\delta ^{-2p\sigma
_{0}}\sum\limits_{\epsilon }\frac{u(Q_{\epsilon })}{l(Q)^{2n}}\sum\limits 
_{\substack{ S:{\delta }^{-p}l(S)=l(Q)  \\ \func{dist}(Q,S)\leq l(Q)  \\ %
Q\cap S=\emptyset }}\sum_{k}v(S_{k}) \\
&\lesssim &\delta ^{-p(1+2\sigma _{0})}\mathcal{A}_{2}.
\end{eqnarray*}%
Above, we have used that 
\begin{equation*}
K(S,1_{Q_{i}}u)\leq \frac{l(S)^{\kappa }}{(l(S)+\func{dist}%
(Q_{i},S))^{\kappa +n}}u(Q_{i}).
\end{equation*}%
This is clearly summable in $p\geq r$ as $-\sigma_{0} >\frac{1}{2}$ (as we can fix $\lambda \in (0,1)$ such that $\lambda< \frac{\kappa}{n+\kappa}$) so the proof  is complete.

\section{Stopping Cubes and Corona Decompositions}
\label{s6}

Our focus is now on the \textquotedblleft short range terms" given by $%
A_{2}^{3}$. Here we will use our pivotal condition. In this section we will
decompose $A_{2}^{3}$ further and estimate each piece.

Recall

\begin{equation*}
\mathcal{A}_{2}^{3}:=\{(Q,S)\in \mathcal{A}_{1}^{1}:l(S)\leq \delta
^{r}l(Q),Q\cap S\neq \emptyset ,\func{dist}(Q,S)\leq l(Q)\}
\end{equation*}%
and 
\begin{equation*}
A_{2}^{3}:=\sum\limits_{(Q,S)\in \mathcal{A}_{2}^{3}}\sum\limits_{\epsilon
=1}^{M_{Q}-1}\sum\limits_{k=1}^{M_{S}-1}\langle f,h_{Q}^{\epsilon }\rangle
_{u}\langle T(uh_{Q}^{\epsilon }),h_{S}^{k}\rangle _{v}\langle
g,h_{S}^{k}\rangle _{v}.
\end{equation*}%
Denote $\Delta _{Q}^{u}f=\sum\limits_{i=1}^{M_{Q-1}}\langle f,h_{Q}^{i}\rangle _{u}h_{Q}^{i}$. Then observe that: 
  
\begin{align}
\notag\langle T(\Delta_Q^u f), \Delta_S^u g\rangle_v &= \langle T((1_{Q}-1_{Q_S})\Delta_Q^u f), \Delta_S^u g\rangle_v + \langle T(1_{Q_S}\Delta_Q^u f), \Delta_S^u g\rangle_v\\ 
\label{e:neigh}& = \langle T((1_{Q}\setminus1_{Q_S})\Delta_Q^u f), \Delta_S^u g\rangle_v\\
\label{e:para}&+ \mathbf{E}_{Q_S}^u(\Delta_Q^u f)\langle T(1_{\widetilde{Q}}), \Delta_S^u g\rangle _v\\
\label{e:stop}&- \mathbf{E}_{Q_S}^u(\Delta_Q^u f)\langle T(1_{\widetilde{Q}\setminus Q_{S}}), \Delta_S^u g\rangle_v .\end{align}
First observe that $1_{\widetilde{Q}}-1_{\widetilde{Q}\setminus
Q_{S}}=1_{Q_{S}}$ and secondly 
\begin{equation*}
1_{Q_{S}}\Delta _{Q}^{u}f=\sum\limits_{i=1}^{M_{Q-1}}\langle
f,h_{Q}^{i}\rangle _{u}h_{Q}^{i}\cdot 1_{Q_{S}}
\end{equation*}%
and so 
\begin{align*}
\mathbf{E}_{Q_{S}}^{u}(\Delta _{Q}^{u}f)& =\frac{1}{u(Q_{S})}%
\int_{Q_{S}}\Delta _{Q}^{u}f(x)du(x) \\
& =\frac{1}{u(Q_{S})}\int_{X}\sum\limits_{i=1}^{M_{Q-1}}\langle
f,h_{Q}^{i}\rangle _{u}h_{Q}^{i}(x)\cdot 1_{Q_{S}}(x)du(x) \\
& =\frac{1}{u(Q_{S})}\sum\limits_{i=1}^{M_{Q-1}}\langle f,h_{Q}^{i}\rangle
_{u}\int_{X}h_{Q}^{i}(x)\cdot 1_{Q_{S}}(x)du(x).
\end{align*}%
Here $Q_{S}$ is child of $Q$ containing $S$ and $\widetilde{Q}$ the parent of $Q_{S}$.

\subsection{The Decomposition of the Short Range Term}

To estimate $A_{2}^{3}$ and conclude this section, we combine the splitting
into \eqref{e:neigh}, \eqref{e:para}, \eqref{e:stop} and the following Corona
decomposition. Namely select the cubes $\widetilde{Q}$ that appear in %
\eqref{e:neigh}-\eqref{e:stop} according to the stopping rule below. Recall the set $%
\mathcal{A}_{2}^{3}$. Using the fact that $S$ is good we can make this set
more explicit, i.e. $S\subset Q$ and $l(S)<\delta ^{r}l(Q)$ 
\begin{equation*}
\mathcal{A}_{2}^{3}:= \{(Q,S)\in \mathcal{A}_{1}^{1}:l(S)\leq \delta
^{r}l(Q),S\subset Q\}.
\end{equation*}

\subsubsection{The Corona Decomposition}

We are going to define the `stopping cubes' and the `Corona decomposition'.
Let us first define the following functionals:
\begin{align*}
\Phi(Q,1_E u) &:= v(Q)K(Q,1_E u)^2, \\
\Psi(Q,1_Eu) &:= \sup\limits_{Q= \cup_{i\geq 1}
Q_i} \sum \limits_{i\geq 1}\Phi(Q_i,1_E u),
\end{align*}
where $Q_i$ are dyadic subcubes of $Q$, hence lie in some dyadic grid as $Q$ and where the supremum is over all $r$-good dyadic subpartitions $%
\{Q_{i}\}_{i\geq 1}$ of $Q$.  We have the following pivotal condition 
\begin{equation}
\label{e:nostop}
\sum_{r\geq 1}\Phi(Q_{r},1_{Q}u)\leq \mathcal{V}^{2}u(Q).
\end{equation}%
We will use certain key properties of $\Psi$ to estimate the term $%
A_{3}^{4}$ defined below.

\begin{definition}
\label{def:5.1} Given any cube $Q_{o}$, we will set $\mathcal{S}(Q_{o})$ to
be the maximal $\mathcal{D}^{u}$ strict subcubes $S\subset Q_{o}$ such that 
\begin{equation}
\label{e:stopcube}
\Psi(S,1_{Q_{o}}u)\geq 4\mathcal{V}^{2}u(S).
\end{equation}%
The collection $\mathcal{S}(Q_{o})$ can be empty.
\end{definition}

We will be able to now recursively define $\mathcal{S}_1 := \{Q_o\}$ and 
$\mathcal{S}_{j+1}:= \cup_{S\in \mathcal{S}_j}\mathcal{S}(S)$. The
collection of $\mathcal{S}:= \cup_{j=1}^{\infty}\mathcal{S}_j$ is the
collection of stopping cubes. Let us define $\rho:\mathcal{S}\mapsto 
\mathbb{N}$ by $\rho(S) := j$ for all $S\in \mathcal{S}_j$, so that $\rho(S)$
denotes the generation in which $S$ occurs in the construction of $\mathcal{S%
}$.

Let us now discuss the associated Corona Decomposition.

\begin{definition}
\label{def:5.2} \noindent For $S^{\prime }\in \mathcal{S}$, we are going to
set $\mathcal{P}(S^{\prime })$ to be all the pairs of cubes $(Q,S)$ such
that

\begin{enumerate}
\item $Q\in \mathcal{D}^{u}$, $S\in \mathcal{D}^{v}$, $S\subset Q$ and $%
l(S)\leq \delta ^{r}l(Q)$;

\item $S^{\prime }$ is the $\mathcal{S}$ parent of $Q_{S}$ which is the child
of $Q$ containing $S$.
\end{enumerate}
\end{definition}

Observe that we can write $\mathcal{A}_{2}^{3}=\cup _{S^{\prime }\in 
\mathcal{S}}\mathcal{P}(S^{\prime })$, where $\mathcal{A}_{2}^{3}$ is as
defined above. We next define the Coronas associated to $f$ and $g$.

\begin{definition}
Let $\mathcal{C}^{u}(S^{\prime })$ be all those $Q\in 
\mathcal{D}^{u}$ such that $S^{\prime }$ is a minimal member of $\mathcal{S}$
that contains a $\mathcal{D}^{u}$ child of $Q$. The definition of $C^{v}(%
\mathcal{S^{\prime }})$ is similar but not symmetric: all those $S\in 
\mathcal{D}^{v}$ such that $S^{\prime }$ is the smallest member of $\mathcal{%
S}$ that contains $S$ and satisfies $l(S)\leq \delta ^{r}l(S^{\prime })$ together with all those $S\in \mathcal{D}_v$ such that for some $S''\in\mathcal{S}(S)$ we have $S\in \mathcal{C}^v(S'')$ with $S\subset S''$ with $l(S)\geq \delta^r l(S'')$.  The collections $\{\mathcal{C}^u (S^{\prime }):S^{\prime }\in \mathcal{S}\}$
and $\{\mathcal{C}^v (S^{\prime }):S^{\prime }\in \mathcal{S}\}$ are
referred to as the Corona Decompositions (the collection $\mathcal{C}^v$ is called the shifted Corona in the literature).
\end{definition}

We will now define the projection operators associated to these Coronas 
\begin{equation*}
\mathcal{P}_{S^{\prime }}^{u}f:=\sum_{Q\in \mathcal{C}^{u}(S^{\prime
})}\sum_{\epsilon =1}^{M_{Q}-1}\langle f,h_{Q}^{\epsilon }\rangle
_{u}h_{Q}^{\epsilon }.
\end{equation*}%
Similarly we can define $\mathcal{P}_{S^{\prime }}^{v}g$. Observe that $%
\mathcal{P}_{S^{\prime }}^{v}g$ projects only cubes $S$ with $l(S)\leq
\delta ^{r}l(S^{\prime })$.

We have the estimate below which we will use in the proofs below 
\begin{equation*}
\sum_{S^{\prime }\in \mathcal{S}}\Vert \mathcal{P}_{S^{\prime }}^{u}f\Vert
_{L^{2}(u)}^{2}\leq \sup_{Q\in \mathcal{D}^{u}}M_{Q}\Vert f\Vert
_{L^{2}(u)}^{2}
\end{equation*}%
where $M_Q$ is the number of children of $Q$ in the grid $\mathcal{D}^u$.  Recall that we also assume throughout the paper $\sup\limits_{Q\in \mathcal{D}%
^{u}}M_{Q}<\infty $. We have a similar inequality for $\mathcal{P}%
_{S^{\prime }}^{v}$.

Observe in the definition of stopping cubes we are using the functional $%
\Psi$ associated with hypothesis \eqref{e:nostop}. So the
stopping cubes can be viewed as the enemy of verifying \eqref{e:nostop}.

\begin{definition}
\label{def:5.3} Given a pair $(Q,S)\in \mathcal{A}_{2}^{3}$, choose $%
\widetilde{Q}\in \mathcal{S}$ to be the unique stopping cube such that $%
Q_{S}\in \mathcal{C}^{u}(\widetilde{Q})$, where $Q_{S}$ is the $\mathcal{S}$
child of $\widetilde{Q}$. Equivalently, $\widetilde{Q}\in \mathcal{S}$ is
determined by the requirement $(Q,S)\in \mathcal{P}(\widetilde{Q}).$
\end{definition}

Note that if $Q_{S}\notin \mathcal{S}$, then $Q\subset \widetilde{Q}$, while
if $Q_{S}\in \mathcal{S}$, then $\widetilde{Q}$ is the child of $Q$
containing $S$. With the choice of $\widetilde{Q}$ in the splitting of %
\eqref{e:neigh}-\eqref{e:stop} we obtain $|A_{2}^{3}|\leq
\sum_{j=1}^{3}|A_{j}^{4}|$ where 
\begin{align}
A_{1}^{4}& :=\sum_{(Q,S)\in \mathcal{A}_{2}^{3}}T(1_{Q\setminus
Q_{S}}u\Delta _{Q}^{u}f),\Delta _{S}^{u}g\rangle _{v}  \label{e:a14}, \\
A_{2}^{4}& :=\sum_{S^{\prime }\in \mathcal{S}}\sum_{(Q,S)\in \mathcal{P}%
(S^{\prime })}\mathbf{E}_{Q_{S}}^{u}(\Delta _{Q}^{u}f)\langle T(1_{S^{\prime
}}u),\Delta _{S}^{u}g\rangle _{v}\label{e:a24}, \\
A_{3}^{4}& :=\sum_{S^{\prime }\in \mathcal{S}}\sum_{(Q,S)\in \mathcal{P}%
(S^{\prime })}\mathbf{E}_{Q_{S}}^{u}(\Delta _{Q}^{u}f)\langle T(1_{S^{\prime
}\setminus Q_{S}}u),\Delta _{S}^{u}g\rangle _{v}.\label{e:a34}
\end{align}%
The three terms above are referred to as the \textit{neighbor}, \textit{%
paraproduct} and \textit{stopping term} respectively.

The paraproduct term $A_{2}^{4}$ is further decomposed and estimates on this
term will be handled in Section \ref{s:Paraproduct}, while in the remainder
of this section we will prove: 
\begin{align}
|A_{1}^{4}|& \lesssim \mathcal{A}_{2}\Vert f\Vert _{L^{2}(u)}\Vert g\Vert
_{L^{2}(v)}  \label{e:a14estimate}, \\
|A_{3}^{4}|& \lesssim \mathcal{V}\Vert f\Vert
_{L^{2}(u)}\Vert g\Vert _{L^{2}(v)}.\label{e:a34estimate}
\end{align}

\subsection{Control of the Neighbor Term $A_1^4$}

The neighbor terms are defined in \eqref{e:a14} and we are to prove %
\eqref{e:a14estimate}. Recall we have $Q\in \mathcal{D}^{u}$, $S\in \mathcal{G}%
^{v}$ contained in $Q$, with $l(S)\leq \delta ^{r}l(Q)$ and $Q_{S}$ is the
child of $Q$ containing $S$.

Fix a child $\theta \in \{1,2,...,M_{Q}-1\}$ and an integer $s^{\prime }\geq
r$. Here we use that $Q\setminus Q_{\theta} = \cup_{\substack{ i=1  \\ i\neq
\theta}}^{M_Q-1} Q_i$.

We are now going to estimate the inner product as that in \eqref{e:neigh}: 
\begin{align*}
\langle T(1_{Q}\setminus1_{Q_{\theta}} u\Delta_Q^u f), \Delta_S^u g\rangle_v
&= \sum\limits_{\substack{ i =0  \\ i\neq \theta}}^{M_{Q}-1}\langle
T(1_{Q_i}u\Delta_Q^u f), \Delta_S^u g\rangle_v \\
&= \sum\limits_{\substack{ i=1  \\ i\neq \theta}}^{M_Q-1}\mathbf{E}%
_{Q_i}^{u}(\Delta_Q^u f)\langle T(1_{Q_{i}} u),\Delta_S^u g\rangle_v.
\end{align*}
Here we will use $\|\Delta_S^v g\|_{L^2(v)} = \left(\sum\limits_{k=1}^{M_S -
1}|\langle g,h_S^k\rangle|_{v}^2\right)^\frac{1}{2}$ and $\frac{l(S)}{%
l(Q_{\theta})} = \delta^{s^{\prime }}$ in Lemma \ref{l:4.3} with $S\subset
Q_{\theta} \subset Q$ to obtain 
\begin{align*}
|\langle T(1_{Q_{i}} u), \Delta_S^u g\rangle_v |&\lesssim v(S)^\frac{1}{2}%
\left(\sum\limits_{k=1}^{M_{S} - 1}|\langle g,h_S^k\rangle|_{v}^2\right)^%
\frac{1}{2}\sum\limits_{\substack{ i=0  \\ i\neq\theta}}%
^{M_{Q}-1}K(S,1_{Q_i}u) \\
&\lesssim v(S)^\frac{1}{2}\left(\sum\limits_{k=1}^{M_{S} - 1}|\langle
g,h_S^k\rangle|_{v}^2\right)^\frac{1}{2}\delta^{-(s^{\prime
}\sigma_{0})}\sum\limits_{\substack{ i=0  \\ i\neq\theta}}%
^{M_{Q}-1}K(Q_{\theta},1_{Q_i}u).
\end{align*}

Here we applied Lemma \ref{l:4.3} and Lemma \ref{l:4.4} to $S\subset
Q\setminus \cup_{\substack{ i=1  \\ i\neq \theta}}^{M_Q-1} Q_i$.

\vspace{0.5cm}

For the sum below we keep the lengths of the cubes $S$ fixed and we are under the
assumption that $S\subset Q_{\theta }$. Define 
\begin{equation*}
\Lambda (Q,\theta ,s^{\prime })^2:= \sum\limits_{\substack{ S:l(S)=\delta
^{s^{\prime }}l(Q)  \\ S\subset Q_{\theta }}}\left(
\sum\limits_{k=1}^{M_{S}-1}|\langle g,h_{S}^{k}\rangle |_{v}^{2}\right) .
\end{equation*}%
Then we have the following estimate using Cauchy-Schwarz

\begin{align*}
A_1^4(Q,\theta,s^{\prime }) &:= \sum\limits_{\substack{ S:l(S)=\delta^{s^{%
\prime }}l(Q)  \\ S\subset Q_{\theta}}}|\langle T(1_{Q}\setminus1_{Q_{\theta}}
u\Delta_Q^u f), \Delta_S^u g\rangle_v| \\
& \leq \delta^{-(s^{\prime }\sigma_{0})}\sum\limits_{\substack{ i=1  \\ %
i\neq \theta}}^{M_{Q}-1}|\mathbf{E}_{Q_i}^{u}(\Delta_Q^u
f)|K(Q_{\theta},\sum\limits_{\substack{ i=1  \\ i\neq \theta}}^{M_{Q}-1}
1_{Q_i}u)\sum\limits_{\substack{ S:l(S)=\delta^{s^{\prime }}l(Q)  \\ %
S\subset Q_{\theta}}}v(S)^\frac{1}{2}\left(\sum\limits_{k=1}^{M_S - 1}|\langle
g,h_S^k\rangle|_{v}^2\right)^\frac{1}{2} \\
&\leq \delta^{-(s^{\prime }\sigma_{0})}\sum\limits_{\substack{ i=1  \\ i\neq
\theta}}^{M_{Q}-1}|\mathbf{E}_{Q_i}^{u}(\Delta_Q^u
f)|K\left(Q_{\theta},\sum\limits_{\substack{ i=1  \\ i\neq \theta}}%
^{M_{Q}-1} 1_{Q_i}u\right)v(Q_{\theta})^\frac{1}{2}\Lambda(Q,\theta,s^{\prime
}).
\end{align*}
We will now use the following to estimate $A_1^4(Q,\theta,s^{\prime })$: 
\begin{equation*}
|\mathbf{E}_{Q_i}^{u}(\Delta_Q^u f)| \leq \left(\sum\limits_{j=1}^{M_Q -
1}|\langle f,h_Q^j\rangle_{u}|^2\right)^\frac{1}{2} u(Q_i)^{-\frac{1}{2}}.
\end{equation*}
Substituting this into the above we find:

\begin{align*}
A_{1}^{4}(Q,\theta ,s^{\prime})&\lesssim \delta^{-(s^{\prime }\sigma _{0})}\left(
\sum\limits_{i=1}^{M_{Q}-1}|\langle f,h_{Q}^{i}\rangle |_{u}^{2}\right) ^{%
\frac{1}{2}}\Lambda (Q,\theta ,s^{\prime })\sum\limits_{\substack{ i=1  \\ %
i\neq \theta }}^{M_{Q}-1}\sum\limits_{\substack{ j=1  \\ j\neq \theta }}%
^{M_{Q}-1}u(Q_{i})^{-\frac{1}{2}}K(Q_{\theta },1_{Q_{j}}u)v(Q_{\theta })^{%
\frac{1}{2}} \\
& \lesssim \mathcal{A}_{2}\delta ^{-(s^{\prime }\sigma _{0})}\left(
\sum\limits_{i=1}^{M_{Q}-1}|\langle f,h_{Q}^{i}\rangle |_{u}^{2}\right) ^{%
\frac{1}{2}}\Lambda (Q,\theta ,s^{\prime }).
\end{align*}%
We can then sum $A_{1}^{4}(Q,\theta ,s^{\prime })$ in $Q$ and apply
Cauchy-Schwarz to show that 
\begin{equation*}
\sum_{Q}A_{1}^{4}(Q,\theta ,s^{\prime })\lesssim \mathcal{A}_{2}\delta
^{-(s^{\prime }\sigma _{0})}\Vert f\Vert _{L^{2}(u)}\Vert g\Vert _{L^{2}(v)};
\end{equation*}%
this last estimate is then summable in the parameter $s^{\prime }$ as we have $-\sigma_0>\frac{1}{2}$ as mentioned in Section \ref{s5}. This
completes the proof of the estimate \eqref{e:a14estimate}.

\subsection{Control of the Stopping Term $A_3^4$}

To control \eqref{e:a34} we want to prove \eqref{e:a34estimate}. Here we will use
the hypothesis \eqref{e:stopcube}.

We first define for $S^{\prime }\in \mathcal{S}$ and $s\geq 0$ an
integer 
\begin{equation*}
A_{3}^{4}(S^{\prime },s):=\sum_{\substack{ (Q,S)\in \mathcal{P}(S^{\prime }) 
\\ l(S)=\delta ^{s}l(Q)}}|\mathbf{E}_{Q_{S}}^{u}(\Delta _{Q}^{u}f)\langle
T(1_{S^{\prime }\setminus Q_{S}}u),\Delta _{S}^{u}g\rangle_{v} |\lesssim
\delta ^{-\sigma_0 s}\mathcal{V}\mathcal{F}(S^{\prime
})\Lambda (S^{\prime },s),
\end{equation*}%
where 
\begin{align*}
\mathcal{F}\left( S^{\prime }\right) ^{2}& :=\sum\limits_{Q\in \mathcal{C}%
^{u}(S^{\prime })}\sum\limits_{\epsilon =1}^{M_{Q}-1}|\langle
f,h_{Q}^{\epsilon }\rangle _{u}|^{2},\\
\Lambda \left( S^{\prime },s\right) ^{2}& :=\sum\limits_{Q\in \mathcal{C}%
^{u}(S^{\prime })}\sum\limits_{\substack{ S:(Q,S)\in \mathcal{P}(S^{\prime
})  \\ l(S)=\delta ^{s}l(Q)}}\sum\limits_{k=1}^{M_{S}-1}|\langle
g,h_{S}^{k}\rangle _{v}|^{2}.
\end{align*}%
Using Cauchy-Schwarz in the variable $Q$ variable above, and appealing to
the following inequality 
\begin{equation*}
|\mathbf{E}_{Q_{i}}^{u}(\Delta _{Q}^{u}f)|\leq \left(
\sum\limits_{j=1}^{M_{Q}-1}|\langle f,h_{Q}^{j}\rangle_{u} |^{2}\right) ^{%
\frac{1}{2}}u(Q_{i})^{-\frac{1}{2}},
\end{equation*}%
to continue the estimate for $A_{3}^{4}(S^{\prime },s)$,
\begin{equation*}
A_{3}^{4}(S^{\prime },s)\leq \mathcal{F}(S^{\prime })\left[
\sum\limits_{Q\in \mathcal{C}^{u}(S^{\prime })}\left( \sum\limits_{\substack{
S:(Q,S)\in \mathcal{P}(S^{\prime })  \\ l(S)=\delta ^{s}l(Q)}}\frac{1}{%
u(Q_{S})^{\frac{1}{2}}}|\langle T(1_{S^{\prime }\setminus Q_{S}}u),\Delta
_{S}^{u}g\rangle_{v} |\right) ^{2}\right] ^{\frac{1}{2}}.
\end{equation*}%
We can now estimate the terms in the square bracket above by 
\begin{align*}
\sum\limits_{Q\in \mathcal{C}^{u}(S^{\prime })}\sum\limits_{\substack{ %
(Q,S)\in \mathcal{P}(S^{\prime })  \\ l(S)=\delta ^{s}l(Q)}}%
\sum\limits_{k=1}^{M_{S}-1}|\langle g,h_{S}^{k}\rangle _{v}|^{2}& \times
\sum\limits_{\substack{ (Q,S)\in \mathcal{P}(S^{\prime })  \\ l(S)=\delta
^{s}l(Q)}}\sum\limits_{k=1}^{M_{S}-1}\frac{1}{u(Q_{S})}|\langle
T(1_{S^{\prime }\setminus Q_{S}}u),h_{S}^{k}\rangle_{v} |^{2} \\
& \lesssim \Lambda \left( S^{\prime },s\right) ^{2}A(S^{\prime },s),
\end{align*}%
where 
\begin{equation*}
A(S^{\prime },s):=\sup\limits_{Q\in \mathcal{C}^{u}(S^{\prime })}\sum\limits 
_{\substack{ (Q,S)\in \mathcal{P}(S^{\prime })  \\ l(S)=\delta ^{s}l(Q)}}%
\sum\limits_{k=1}^{M_{S}-1}\frac{1}{u(Q_{S})}|\langle T(1_{S^{\prime
}\setminus Q_{S}}u),h_{S}^{k}\rangle_{v} |^{2}.
\end{equation*}%
We will now estimate the term $A(S^{\prime },s)$. We will denote the
children of $Q$ by $Q_{\theta }$ for $\theta \in \{1,2....,M_{Q}-1\}$ and
denote the children of $S$ by $S_{k}$ for $k\in \{1,2,...,M_{S}-1\}$. Also
observe that $h_{S}^{k}$ is supported on $S$ and $h_{S}^{k}=\sum%
\limits_{k=1}^{M_{S}-1}C_{S_{k}}1_{S_{k}}$ such that $\Vert h_{S}^{k}\Vert
_{{L}^{2}(u)}^2=\sum\limits_{k=1}^{M_{S}-1}C_{S_{k}}^{2}=1$. So using %
\eqref{e:4.4} we have the following: 
\begin{align*}
A(S^{\prime },s)& \lesssim \sup\limits_{Q\in \mathcal{C}^{u}(S^{\prime
})}\sup\limits_{\theta \in \{1,2...M_{Q}-1\}}\sum\limits_{\substack{ %
(Q,S)\in \mathcal{P}(S^{\prime }):Q_{S}=Q_{\theta }  \\ l(S)=\delta ^{s}l(Q) 
}}\sum\limits_{k=1}^{M_{S}-1}\frac{1}{u(Q_{\theta })}\Phi (S_{k},1_{S^{\prime}\setminus Q_{\theta }}u) \\
&\lesssim \sup\limits_{Q\in \mathcal{C}^{u}(S^{\prime
})}\sup\limits_{\theta \in \{1,2...M_{Q}-1\}}\sum\limits_{\substack{ %
(Q,S)\in \mathcal{P}(S^{\prime }):Q_{S}=Q_{\theta }  \\ l(S)=\delta ^{s}l(Q) 
}}\sum\limits_{k=1}^{M_{S}-1}\frac{1}{u(Q_{\theta })}v(S_k)K(S_k,1_{S^{\prime}\setminus Q_{\theta}}u)^2\\
&\lesssim \sup\limits_{Q\in \mathcal{C}^{u}(S^{\prime
})}\sup\limits_{\theta \in \{1,2...M_{Q}-1\}}\sum\limits_{\substack{ %
(Q,S)\in \mathcal{P}(S^{\prime }):Q_{S}=Q_{\theta }  \\ l(S)=\delta ^{s}l(Q) 
}}\sum\limits_{k=1}^{M_{S}-1}\frac{1}{u(Q_{\theta })}v(Q_{\theta})\left(\frac{l(Q_{\theta})}{l(S_{k})}\right)^{\sigma_0}K(Q_{\theta},1_{S^{\prime}\setminus Q_{\theta}}u)^2\\
&\lesssim \sup\limits_{Q\in \mathcal{C}^{u}(S^{\prime
})}\sup\limits_{\theta \in \{1,2...M_{Q}-1\}}\sum\limits_{\substack{ %
(Q,S)\in \mathcal{P}(S^{\prime }):Q_{S}=Q_{\theta }  \\ l(S)=\delta ^{s}l(Q) 
}}\sum\limits_{k=1}^{M_{S}-1}\frac{1}{u(Q_{\theta })}\delta^{-\sigma_{0}(s+1)}\Phi(Q_{\theta},1_{S^{\prime}\setminus Q_{\theta}}u)\\
& \lesssim \sup\limits_{Q\in \mathcal{C}^{u}(S^{\prime
})}\sup\limits_{\theta \in \{1,2...M_{Q}-1\}}\frac{1}{u(Q_{\theta })}%
\sum\limits_{\substack{ (Q,S)\in \mathcal{P}(S^{\prime }):Q_{S}=Q_{\theta } 
\\ l(S)=\delta ^{s}l(Q)}}\sum\limits_{k=1}^{M_{S}-1}\delta^{-\sigma_{0}(s+1)}\Phi (Q_{\theta},1_{S^{\prime}}u) \\
& \lesssim \sup\limits_{Q\in \mathcal{C}^{u}(S^{\prime
})}\sup\limits_{\theta \in \{1,2...M_{Q}-1\}}\frac{1}{u(Q_{\theta })}\delta
^{-\sigma_{0}(s+1)} u(S^{\prime})\mathcal{V}^2 \\
& \lesssim \delta ^{-\sigma_{0}(s+1)}\mathcal{V}^2.
\end{align*}%
Here we used Lemma \ref{l:4.4} in the third inequality, used \eqref{e:nostop} in the second to last line and also that the number of children of any cube is uniformly finite.  Here we have also used that $(Q,S)\in \mathcal{P}(S^{\prime })$, so we have that $%
S^{\prime }$ is the $\mathcal{S}$-parent of $Q_{S}$ hence $Q_{S}$ is not a
stopping cube, so \eqref{e:stopcube} does not hold, hence giving the estimate above.

We can observe that 
\begin{equation*}
\sum\limits_{S^{\prime }\in \mathcal{S}}\mathcal{F}\left( S^{\prime }\right)
^{2}\lesssim \Vert f\Vert _{L^{2}(u)}^{2}.
\end{equation*}%
And we have the following
\begin{equation*}
|A_{3}^{4}|\leq \sum\limits_{S^{\prime }\in \mathcal{S}}\sum\limits_{s=0}^{%
\infty }A_{3}^{4}(S^{\prime },s)\lesssim \mathcal{V}\Vert
f\Vert _{L^{2}(u)}^{2}\Vert g\Vert _{L^{2}(v)}^{2}.
\end{equation*}

\section{The Carleson Measure Estimates}

In this section we will prove Carleson measure estimates useful for the
analysis on the paraproduct term $A_2^4$.  In the lemma below we use the stopping time definition. For a cube $S\in 
\mathcal{D}^{v}$ let 
\begin{equation*}
\Tilde{\mathcal{P}}_{S}^{v}(g):=\sum_{\substack{ S^{\prime }\in \mathcal{D}%
^{v}:S^{\prime }\subset S}}\sum_{k=1}^{M_{S^{\prime }}-1}\langle
g,h_{S^{\prime }}^{k}\rangle h_{S^{\prime }}^{k}.
\end{equation*}

\noindent Observe that the projection $\Tilde{\mathcal{P}_{S}^{v}}$ is onto
the span of all Haar functions $h_{S}^{\prime }$ supported in the $\mathcal{D%
}^{v}$ cube $S$. In contrast $\mathcal{P}_{S}^{v}$ projects onto the span of
all Haar functions $h_{S}$ with $S$ in the corona $\mathcal{C}^{v}(S)$ where 
$S$ is a stopping cube in the $\mathcal{D}^{u}$ grid.

\begin{lemma}
\label{l:6.1} Fix a cube $Q_{0}\in \mathcal{D}^{u}$ and let $\hat{Q}_{0}\in 
\mathcal{S}$ be its $\mathcal{S}$-parent. Let $\{Q_{m}:m\geq 1\}\subset 
\mathcal{D}^{u}$ be a strict subpartition of $Q_{0}$.  Suppose that $Q_m$ is good for $m\geq 1$.
Let $\{S_{m,s^{\prime }}:s^{\prime }\geq 1\}\subset \mathcal{D}^{v}$ be a subpartition of $Q_{m}$ with $l(S_{m,s^{\prime }})<\delta ^{r}l(Q_{m})$ for all $m,s^{\prime }\geq 1$. 
We then have the following 
\begin{equation}
\sum_{m,s^{\prime }\geq 1}\Vert \Tilde{\mathcal{P}}_{S_{m,s^{\prime
}}}^{v}T(1_{\hat{Q}_{0}\setminus Q_{m}}u)\Vert _{L^{2}(v)}^{2}\lesssim \mathcal{V}^2u(Q_{0}).
\label{e:estimate1}
\end{equation}
\end{lemma}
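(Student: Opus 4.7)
The plan is to reduce the sum to the pivotal condition, pivoting on two observations: the projection $\tilde{\mathcal{P}}_{S_{m,s'}}^v$ kills $v$-averages on $S_{m,s'}$, so the $L^2$ form \eqref{e:l2form} of Lemma \ref{l:4.3} converts each summand into a $\Phi$-functional; and the fact that $Q_0$ lies in the corona $\mathcal{C}^u(\hat{Q}_0)$ (and so is not a stopping cube) gives the tight pivotal bound with $u(Q_0)$ on the right.

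\medskip

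\emph{Step 1 (reduction to $\Phi$).} Each Haar function $h^k_{S'}$ supported in $S'\subset S_{m,s'}$ has zero $v$-integral, so $\tilde{\mathcal{P}}_{S_{m,s'}}^v$ lands in the $L^2(S_{m,s'},v)$-orthogonal complement of constants. Hence for any locally integrable $F$,
\[
\|\tilde{\mathcal{P}}_{S_{m,s'}}^v F\|_{L^2(v)}^2 \le \|F-\mathbf{E}_{S_{m,s'}}^v F\|_{L^2(S_{m,s'},v)}^2.
\]
Applying this with $F=T(1_{\hat Q_0\setminus Q_m}u)$, and invoking the dual formulation \eqref{e:l2form} of Lemma \ref{l:4.3} with $S=S_{m,s'}$, $Q'=Q_m$, $\hat Q=\hat Q_0$ (the hypothesis $\operatorname{dist}(\partial Q_m,S_{m,s'})\ge l(S_{m,s'})$ follows from goodness combined with $l(S_{m,s'})<\delta^r l(Q_m)$), we obtain
\[
\|\tilde{\mathcal{P}}_{S_{m,s'}}^v T(1_{\hat Q_0\setminus Q_m}u)\|_{L^2(v)}^2 \lesssim \Phi(S_{m,s'},1_{\hat Q_0\setminus Q_m}u).
\]

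\medskip

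\emph{Step 2 (corona-based pivotal).} Since $1_{\hat Q_0\setminus Q_m}\le 1_{\hat Q_0}$ and $K(S,\cdot\,u)$ is monotone in its integrand, $\Phi(S_{m,s'},1_{\hat Q_0\setminus Q_m}u)\le \Phi(S_{m,s'},1_{\hat Q_0}u)$. The collection $\{S_{m,s'}\}_{m,s'\ge 1}$ consists of pairwise disjoint good cubes contained in $Q_0$, so it is an $r$-good subpartition of a subset of $Q_0$ and may be dominated by $\Psi(Q_0,1_{\hat Q_0}u)$. Because $\hat Q_0$ is the $\mathcal{S}$-parent of $Q_0$, the cube $Q_0$ itself is not in $\mathcal{S}$, so the stopping rule \eqref{e:stopcube} \emph{fails} at $Q_0$:
\[
\Psi(Q_0,1_{\hat Q_0}u) < 4\mathcal{V}^2 u(Q_0).
\]
Combining these ingredients,
\[
\sum_{m,s'\ge 1}\|\tilde{\mathcal{P}}_{S_{m,s'}}^v T(1_{\hat Q_0\setminus Q_m}u)\|_{L^2(v)}^2 \lesssim \sum_{m,s'}\Phi(S_{m,s'},1_{\hat Q_0}u) \le \Psi(Q_0,1_{\hat Q_0}u) \lesssim \mathcal{V}^2 u(Q_0),
\]
which is the desired bound.

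\medskip

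The main subtlety is \emph{which} stopping cube to pivot on when invoking the pivotal condition: the naive choice of $\hat Q_0$ would only yield $\mathcal{V}^2 u(\hat Q_0)$, which is far too weak for the subsequent Carleson-type summation. The key trick is to pass first to $Q_0$, exploiting that $Q_0$ lies strictly inside the corona $\mathcal{C}^u(\hat Q_0)$ and therefore must violate the stopping criterion, which gives exactly the $u(Q_0)$-factor required. A secondary technical point is checking the separation hypothesis of Lemma \ref{l:4.3} for each $S_{m,s'}$ inside $Q_m$; this is where $r$-goodness of $S_{m,s'}$ in the grid containing $Q_m$, together with the scale gap $l(S_{m,s'})<\delta^r l(Q_m)$, is used.
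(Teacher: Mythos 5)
Your Step 1 and the central observation are both on the right track: the reduction to the $\Phi$-functional via \eqref{e:l2form}, and the realization that one must pivot on $Q_0$ (not $\hat Q_0$) to get $u(Q_0)$ rather than $u(\hat Q_0)$, are exactly the ideas driving the paper's proof. The sentence in the middle of your Step 2, however, contains a genuine gap.

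You claim that $\{S_{m,s'}\}_{m,s'}$ is an $r$-good subpartition of a subset of $Q_0$ and hence $\sum_{m,s'}\Phi(S_{m,s'},1_{\hat Q_0}u)\le \Psi(Q_0,1_{\hat Q_0}u)$. But the functional $\Psi(Q_0,\cdot)$ and the pivotal hypothesis \eqref{e:nostop} are taken over subpartitions of $Q_0$ by \emph{$Q_0$-dyadic} subcubes, i.e.\ cubes from $\mathcal{D}^u$ (see \eqref{thm2 pivotal} and Definition \ref{def:5.1}, where the stopping cubes are explicitly ``maximal $\mathcal{D}^u$ strict subcubes''). The cubes $S_{m,s'}$ live in the \emph{other} grid $\mathcal{D}^v$, so they are not admissible competitors for $\Psi(Q_0,\cdot)$, and the inequality you invoke does not follow from the stopping-rule failure alone. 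This is precisely why the paper's proof does not discard the set $Q_m$: it keeps $1_{\hat Q_0\setminus Q_m}$ so that Lemma \ref{l:4.4} can be applied with $S=S_{m,s'}$, $Q=Q_m$, $\hat Q = \hat Q_0$ to obtain
\[
\Phi(S_{m,s'},1_{\hat Q_0\setminus Q_m}u) \lesssim \frac{v(S_{m,s'})}{v(Q_m)}\left(\frac{l(Q_m)}{l(S_{m,s'})}\right)^{2\sigma_0}\Phi(Q_m,1_{\hat Q_0}u),
\]
and then sums over $s'$ (using that $\{S_{m,s'}\}_{s'}$ subpartitions $Q_m$) to transfer the estimate onto the $\mathcal{D}^u$-collection $\{Q_m\}$. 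Only then is $\{Q_m\}$ an admissible $r$-good $\mathcal{D}^u$-subpartition of $Q_0$ to which the failure of \eqref{e:stopcube} at $Q_0$ (and, for the piece supported in $Q_0$, the pivotal condition \eqref{e:nostop}) can be applied. Your bound $\Phi(S_{m,s'},1_{\hat Q_0\setminus Q_m}u)\le\Phi(S_{m,s'},1_{\hat Q_0}u)$ throws away the avoidance of $Q_m$, which is exactly the hypothesis needed by Lemma \ref{l:4.4}, so the route to repair your argument requires undoing that step.

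A secondary issue: you assert that ``$\hat Q_0$ is the $\mathcal{S}$-parent of $Q_0$, so $Q_0$ is not in $\mathcal{S}$.'' The lemma as stated (and as used downstream) allows $Q_0=\hat Q_0\in\mathcal{S}$, in which case the stopping-rule-failure inequality $\Psi(Q_0,1_{\hat Q_0}u)<4\mathcal{V}^2u(Q_0)$ is not available; instead, $\hat Q_0\setminus Q_m=Q_0\setminus Q_m\subset Q_0$ and the bound comes directly from the pivotal hypothesis \eqref{e:nostop}. The paper remarks on this case at the end of its proof; your argument omits it.
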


\begin{proof}
We are now going to use the $L^{2}$ formulation \eqref{e:4.4} of Lemma \ref%
{l:4.3} to deduce \eqref{e:estimate1}. We begin with the $L^{2}$ formulation to
obtain 
\begin{align*}
\sum\limits_{m,s^{\prime }\geq 1}\Vert \Tilde{\mathcal{P}}_{S_{m,s^{\prime
}}}^{v}T(1_{\hat{Q}_{0}\setminus Q_{m}}u)\Vert _{L^{2}(v)}^{2}& \lesssim
\sum\limits_{m,s^{\prime }\geq 1}\Phi (S_{m,s^{\prime }},1_{\hat{Q}_{0}\setminus Q_{m}}u) \\
& \lesssim\sum\limits_{m,s^{\prime }\geq 1}\Phi (S_{m,s^{\prime }},1_{\hat{Q}%
_{0}\setminus Q_{0}}u)+\sum\limits_{m,s^{\prime }\geq 1}\Phi (S_{m,s^{\prime
}},1_{Q_{0}\setminus Q_{m}}u).
\end{align*}%
The last inequality follows from the definition of $\Phi $ and 
\begin{equation*}
K(S,1_{\hat{Q}_{0}\setminus Q_{m}}u)=K(S,1_{\hat{Q}_{0}\setminus
Q_{0}}u)+K(S,1_{Q_{0}\setminus Q_{m}}u).
\end{equation*}%
If $Q_{0}\neq \hat{Q}_{0}$, we estimate the sum involving $\hat{Q}%
_{0}\setminus Q_{0}$ using the fact that $\{S_{m,s^{\prime }}\}_{m,s^{\prime
}\geq 1}$ is a $m$-good subpartition of $Q_{0}$. We use Lemma \ref{l:4.4} to get the first inequality below and given the
fact that \eqref{e:stopcube} fails and using \eqref{e:nostop} we obtain the following when $0<t<r$
\begin{eqnarray*}
\sum\limits_{m,s^{\prime }\geq 1}\Phi (S_{m,s^{\prime }},1_{\hat{Q}%
_{0}\setminus Q_{0}}u)&\lesssim& \sum\limits_{m,s^{\prime }\geq 1}\left[ \frac{l(Q_{m})}{l(S_{m,s^{\prime }})}\right] ^{\sigma_{0}}\Phi
(Q_{m},1_{\hat{Q}_{0}}u)\\&\lesssim & \sum\limits_{m,b^{\prime}\geq 1}\sum\limits_{\substack{S_{m,b^{\prime }}\in \mathcal{D}^{v} \\l(S_{m,b^{\prime }})= \delta^{t}l(Q_m)}}\left[ \frac{l(Q_{m})}{l(S_{m,b^{\prime }})}\right] ^{\sigma_{0}}\Phi
(Q_{m},1_{\hat{Q}_{0}}u) \\&\lesssim&\sum\limits_{m,b^{\prime}\geq 1}\sum_{t<r}\delta ^{-\sigma_{0} t}\Phi(Q_{m},1_{\hat{Q}_{0}}u)
\\&\lesssim & \mathcal{V}^{2}u(Q_{0}).
\end{eqnarray*}

\vspace{0.5 cm}

\noindent Then to estimate the sum involving $Q_{0}\setminus Q_{m}$, we use
the fact that $\{S_{m,s^{\prime }}\}_{m,s^{\prime }\geq 1}$ is a $r $%
-good subpartition of $Q_{m}$ for each $m$. Then using the same steps as above and using \eqref{e:nostop} we
obtain 
\begin{equation*}
\sum\limits_{m,s^{\prime }\geq 1}\Phi (S_{m,s^{\prime }},1_{Q_{0}\setminus
Q_{m}}u)\lesssim \sum\limits_{m,s^{\prime}\geq 1}\left[ 
\frac{l(Q_{m})}{l(S_{m,s^{\prime }})}\right] ^{\sigma_{0} }\Phi (Q_{m},1_{Q_{0}}u)\lesssim \mathcal{V}^{2}u(Q_{0}).
\end{equation*}%
This last estimate can be also used to prove when $Q_{0}=\hat{Q}_{0}\in 
\mathcal{S}$.
\end{proof}

\begin{theorem}
\label{t:carltheorem1} We have the following Carleson measure estimates for $%
S^{\prime }\in \mathcal{S}$ and $K\in \mathcal{D}^{u}$:
\begin{align}
\label{e:estimate2}\sum_{J'\in \mathcal{S}(S')}u(J')\leq \frac{1}{4}u(S') \quad \textnormal{ and } \quad \sum_{S' \in \mathcal{S}:S'\subsetneq K} u(S')\lesssim u(K);\\
\label{e:estimate3}\sum_{J\in \mathcal{C}^v (S'):J\subset K, l(J)< \delta^r l(K)}|\langle T(1_{S'} u),h_{J}^{v} \rangle_v|^2 \lesssim (\mathcal{V}^{2} + \mathcal{T}^2) u(K).
\end{align}
\end{theorem}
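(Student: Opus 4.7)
The plan is to derive \eqref{e:estimate2} from the pivotal condition combined with a generation-by-generation iteration of the stopping construction, and to derive \eqref{e:estimate3} by splitting $T(1_{S'}u)$ into a local piece (controlled by the testing condition via Haar orthogonality) and a nonlocal piece (controlled by Lemma \ref{l:4.3}, Lemma \ref{l:4.4}, and the failure of the stopping criterion at $K$).

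For the first assertion in \eqref{e:estimate2}, fix $S'\in\mathcal{S}$. By Definition \ref{def:5.1} every $J'\in\mathcal{S}(S')$ satisfies $\Psi(J',1_{S'}u)\geq 4\mathcal{V}^2u(J')$. Unwinding the supremum defining $\Psi$, one selects, for each $J'$, a good subpartition $\{Q^{J'}_i\}$ of $J'$ recovering essentially all of $\Psi(J',1_{S'}u)$. Because the cubes in $\mathcal{S}(S')$ are pairwise disjoint subcubes of $S'$, the union $\bigcup_{J'}\{Q_i^{J'}\}$ extends to a good subpartition of $S'$, so the pivotal condition $\Psi(S',1_{S'}u)\leq\mathcal{V}^2u(S')$ forces $4\mathcal{V}^2\sum_{J'\in\mathcal{S}(S')}u(J')\leq\mathcal{V}^2 u(S')$ (after absorbing the approximation loss into the constant), giving the factor $\tfrac{1}{4}$. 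The second assertion in \eqref{e:estimate2} follows by stratifying $\{S'\in\mathcal{S}:S'\subsetneq K\}$ into generations relative to $K$: the maximal such $S'$ have $u$-masses summing to at most $u(K)$ by disjointness, and each subsequent generation inherits the factor $\tfrac{1}{4}$ from the first assertion, yielding a geometric series bounded by $\tfrac{4}{3}u(K)$.

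For \eqref{e:estimate3}, decompose $T(1_{S'}u)=T(1_K u)+T(1_{S'\setminus K}u)$. For the first summand, the Haar system $\{h_J^v\}_{J\in\mathcal{D}^v}$ is orthonormal in $L^2(v)$ with $h_J^v$ supported in $J\subset K$, so Bessel's inequality together with the testing condition yields
\[
\sum_{J}\lvert\langle T(1_K u),h_J^v\rangle_v\rvert^2\leq \|T(1_K u)\|_{L^2(K,v)}^2\leq\mathcal{T}^2 u(K).
\]
For the second summand, each $h_J^v$ has vanishing $v$-integral, so Lemma \ref{l:4.3} applies with $\hat{Q}=S'$, $Q'=K$, $S=J$; the separation hypothesis $\operatorname{dist}(\partial K,J)\geq l(J)$ follows from the goodness of $J$ together with $l(J)<\delta^r l(K)$. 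This gives $\lvert\langle T(1_{S'\setminus K}u),h_J^v\rangle_v\rvert^2\lesssim \Phi(J,1_{S'\setminus K}u)$. Now invoke Lemma \ref{l:4.4} to compare the Poisson-type integrals at scales $l(J)$ and $l(K)$, producing the decay factor $(l(J)/l(K))^{2\lvert\sigma_0\rvert}$. Organising the sum over $J$ by $\mathcal{D}^v$-scale yields level-by-level disjointness in $K$ (hence $v$-mass at most $v(K)$ per level) and a geometric series in the scale since $\lvert\sigma_0\rvert>0$; this dominates the full sum by $\Phi(K,1_{S'\setminus K}u)\leq \Phi(K,1_{S'}u)\leq\Psi(K,1_{S'}u)$. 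The construction of $\mathcal{C}^u(S')$ (the stopping criterion fails along the corona) yields $\Psi(K,1_{S'}u)\lesssim\mathcal{V}^2 u(K)$, completing the proof.

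The main technical hurdle will be the last step for the nonlocal piece: assembling the Poisson comparison of Lemma \ref{l:4.4}, the negative sign of $\sigma_0$, and level-wise disjointness in $K$ to produce a convergent geometric series dominated by $\Phi(K,1_{S'\setminus K}u)$, and then converting this into the desired Carleson bound via the precise statement that $K$ fails the stopping criterion relative to $S'$. The remaining parts reduce to a careful but routine application of Bessel, the pivotal hypothesis, and the testing condition.
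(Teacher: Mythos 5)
Your treatment of \eqref{e:estimate2} and of the local summand $T(1_K u)$ in \eqref{e:estimate3} follows the paper's argument essentially verbatim: dividing the stopping inequality $\Psi(J',1_{S'}u)\ge 4\mathcal{V}^2 u(J')$ by $4\mathcal{V}^2$, combining near-optimal subpartitions of the $J'$ into a single good subpartition of $S'$, and invoking the pivotal condition gives the exact factor $\tfrac14$ (there is in fact no approximation loss to absorb, since letting $\epsilon\to 0$ in the choice of near-optimal subpartitions recovers the sharp constant); the generation-by-generation geometric series then gives the second half of \eqref{e:estimate2}; and Bessel plus testing handles the local piece of \eqref{e:estimate3}.

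For the nonlocal summand $T(1_{S'\setminus K}u)$ you take a genuinely different route from the paper. The paper groups the $J$'s under the \emph{maximal} cubes $\mathcal{J}=\{J_0\}$ in the sum, bounds $\sum_{J\subset J_0}|\langle T(1_{S'\setminus K}u),h_J^v\rangle_v|^2\le \|\tilde{\mathcal{P}}_{J_0}^v T(1_{S'\setminus K}u)\|_{L^2(v)}^2\lesssim \Phi(J_0,1_{S'\setminus K}u)$ using the $L^2$ formulation \eqref{e:l2form} of Lemma \ref{l:4.3}, and then sums over $\mathcal{J}$ using that $\mathcal{J}$ is a good subpartition of $K$ and that the stopping criterion fails at $K$. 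You instead apply Lemma \ref{l:4.3} to every $J$ individually, invoke Lemma \ref{l:4.4} to pull each $\Phi(J,1_{S'\setminus K}u)$ up to $\Phi(K,1_{S'\setminus K}u)$ with a geometric gain $(l(J)/l(K))^{2|\sigma_0|}$, and sum a geometric series in the scale. The weak link is your terminal step $\Phi(K,1_{S'}u)\le\Psi(K,1_{S'}u)$: by the paper's definition of $\Psi$, the supremum runs only over $r$-good subpartitions, so the trivial subpartition $\{K\}$ is admissible only when $K$ itself is $r$-good. But $K$ in the statement of the theorem is an arbitrary cube of $\mathcal{D}^u$, with no goodness assumed, so this inequality is not justified as written. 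The paper's route avoids this precisely because every cube in $\mathcal{J}$ lies in the corona $\mathcal{C}^v(S')$ (hence is good by construction), so $\mathcal{J}$ is automatically an admissible subpartition in the $\Psi$-supremum. Your argument can be repaired by routing the geometric sum through the maximal cubes $J_0\in\mathcal{J}$ rather than through $K$ — compare $\Phi(J,1_{S'\setminus K}u)\le\Phi(J,1_{S'\setminus J_0}u)$ with $\Phi(J_0,1_{S'\setminus J_0}u)$ via Lemma \ref{l:4.4}, sum the series inside each $J_0$, and only then sum over $\mathcal{J}$ and invoke the failure of \eqref{e:stopcube} at $K$ — but as written the passage from $\Phi(K,\cdot)$ to $\Psi(K,\cdot)$ is a genuine gap.
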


\begin{proof}
For the second part of the inequality in $(\ref{e:estimate2})$, it suffices to
verify it for $K=S_{0}\in \mathcal{S}$. And then the case we are interested
in follows from recursive application of the estimate from the first half of
the inequality \eqref{e:estimate2} to the cube $S_{0}$ and all of its children
in $\mathcal{S}$.

We now prove the first half of \eqref{e:estimate2}. The cubes in the collection $%
\mathcal{S}(S_{0})=\{S_{m}^{\prime }:m\geq 1\}$ given in the Definition \ref%
{def:5.1} are pairwise disjoint and strictly contained in $Q_{0}$. Each of
them satisfies \eqref{e:stopcube}, so we can apply that estimate along with %
\eqref{e:nostop} to get 
\begin{equation*}
\sum_{J^{\prime }\in \mathcal{S}(S_{0})}u(J^{\prime })=\sum_{r\geq
1}u(S_{m}^{\prime })\leq \frac{1}{4\mathcal{V}^{2}}%
\sum_{m\geq 1}\Psi (S_{m}^{\prime },1_{S_{0}}u)\leq \frac{1}{4}%
u(S_{0}).
\end{equation*}

We will now prove \eqref{e:estimate3}. First we will fix $S^{\prime }\in \mathcal{S%
}$ and $K$, which can be assumed to be a subset of $S^{\prime }$. We will
apply the operator $T$ to $u 1_K$ as opposed to $u 1_{S^{\prime }}$, and we can
then use the testing condition for $T$ to get 
\begin{equation*}
\sum_{J\in \mathcal{C}^v (S'):J\subset K, l(J)< \delta^r l(K)}|\langle T(1_{K}
u),h_{J}^{v} \rangle_v|^2 \leq \int_K |T(1_K u)|^2 dv \leq \mathcal{T}^2
u(K).
\end{equation*}

Now we will apply $T$ to $u1_{S^{\prime }\setminus K}$ and show 
\begin{equation*}
\sum_{J\in \mathcal{C}^v (S'):J\subset K, l(J)< \delta^r l(K)}|\langle T(1_{S^{\prime
}\setminus K}u),h_{J}^{v}\rangle _{v}|^{2}\lesssim \mathcal{V}^{2}u(K).
\end{equation*}%
We can assume that $K\subsetneq S^{\prime }$ and there is some $J\in 
\mathcal{C}^{v}(S^{\prime })$ with $J\subset K$. From this we can say that $%
K $ is not a stopping cube. Therefore the cube $K$ must fail %
\eqref{e:stopcube}.

Let $\mathcal{J}$ denote the maximal cubes $J\in \mathcal{C}%
^{v}(S^{\prime })$ with $J\subset K$ and $l(J)\leq \delta ^{r}l(K)$. Using
the definition of $\Tilde{\mathcal{P}}_{S}^{v}(g)$, we can use \eqref{e:estimate2}, with $\hat{Q}%
=S^{\prime }$ and $J\in \mathcal{J}$. It gives 
\begin{equation*}
\sum_{J\in \mathcal{J}}\Vert \Tilde{\mathcal{P}}_{J}^{v}T(1_{S^{\prime
}\setminus K}u)\Vert _{L^{2}(v)}^{2}\lesssim \sum_{J\in \mathcal{J}}\Phi
(J,1_{S^{\prime }\setminus K}u)\lesssim \mathcal{V}^{2}u(K).
\end{equation*}%
The second inequality uses the fact $K$ fails \eqref{e:nostop}. This proves %
\eqref{e:estimate3}.
\end{proof}

The following Carleson measure estimate uses hypothesis \eqref{e:nostop} in the
proof. It will provide the decay in the parameter $t$ in Theorem \ref{t:estimate}. For
all integers $t\geq 0$, we define for $S\in \mathcal{S}$, which are not
maximal 
\begin{equation*}
\alpha _{t}(S):=\sum_{S^{\prime }:\pi _{S}^{t}(S^{\prime })=S}\Vert \mathcal{%
P}_{S^{\prime }}^{v}T(u1_{\pi _{S}^{1}(S)\setminus S})\Vert _{L^{2}(v)}^{2}.
\end{equation*}%
Here $\pi _{\mathcal{D}^{u}}^{t}(S)$ is the $t$-ancestor of $S$ in $\mathcal{D}^{u}$. Also we are
taking the projection $T(u1_{\pi _{S}^{1}(S)\setminus S})$ associated to
parts of the corona decomposition which are `far below' $S$. We have the
following off-diagonal estimate.

\begin{theorem}\label{thm off-dia}
\label{t:estimate} The following Carleson measure estimate holds: 
\begin{equation}
\label{e:estimate4}
\sum_{S:\pi _{\mathcal{D}^{u}}^{1}(S)\subset K}\alpha _{t}(S)\lesssim \delta
^{-\sigma_{0} t}\mathcal{V}^{2}u(K),\hspace{1cm}K\in \mathcal{D%
}^{u}.
\end{equation}%
The implicit constant is independent of the choices of the cube $K$ and $t\geq 1$.
\end{theorem}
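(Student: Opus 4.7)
I would prove Theorem \ref{t:estimate} by mirroring the proof of Lemma \ref{l:6.1}, but inserting Lemma \ref{l:4.4} to extract the geometric decay in the generation parameter $t$. The three ingredients are: the $L^{2}$ formulation \eqref{e:l2form} of Lemma \ref{l:4.3}, the Poisson transference of Lemma \ref{l:4.4}, and the pivotal condition \eqref{e:nostop} combined with the stopping-tower bound \eqref{e:estimate2}.

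\emph{Step 1: Pointwise bound on $\alpha_{t}(S)$.} Fix a non-maximal $S\in \mathcal{S}$ with $\mathcal{S}$-parent $\widehat{S}:=\pi_{S}^{1}(S)$, and a descendant $S'$ at depth $t$ below $S$. Expanding $\mathcal{P}_{S'}^{v} T(u\mathbf{1}_{\widehat{S}\setminus S})$ in the Haar basis and applying \eqref{e:l2form} to each term (using orthogonality of Haar functions and the cancellation they enjoy with respect to $v$) yields
\[
\|\mathcal{P}_{S'}^{v} T(u\mathbf{1}_{\widehat{S}\setminus S})\|_{L^{2}(v)}^{2} \lesssim \Phi(S', u\mathbf{1}_{\widehat{S}\setminus S}).
\]
Since $S'$ is $r$-good and satisfies $l(S')=\delta^{t}l(S)$, Lemma \ref{l:4.4} applies to the pair $S'\subset S\subset \widehat{S}$ and gives
\[
K(S', u\mathbf{1}_{\widehat{S}\setminus S}) \lesssim \delta^{-\sigma_{0} t}\, K(S, u\mathbf{1}_{\widehat{S}\setminus S}).
\]
Using disjointness, $\sum_{S':\pi_{S}^{t}(S')=S} v(S')\leq v(S)$, summing the previous two displays then produces
\[
\alpha_{t}(S) \lesssim \delta^{-\sigma_{0} t}\, \Phi(S, u\mathbf{1}_{\widehat{S}\setminus S}).
\]

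\emph{Step 2: Summation in $S$.} Group the sum by the $\mathcal{S}$-parent $\widehat{S}\subset K$. For each fixed $\widehat{S}$, the collection $\{S\in \mathcal{S}: \pi_{S}^{1}(S)=\widehat{S}\}=\mathcal{S}(\widehat{S})$ is an $r$-good partition of $\widehat{S}$, so the pivotal condition \eqref{e:nostop} gives
\[
\sum_{S\in \mathcal{S}(\widehat{S})} \Phi(S, u\mathbf{1}_{\widehat{S}\setminus S}) \leq \sum_{S\in \mathcal{S}(\widehat{S})} \Phi(S, u\mathbf{1}_{\widehat{S}}) \leq \mathcal{V}^{2}\, u(\widehat{S}).
\]
Summing over $\widehat{S}\in \mathcal{S}$ with $\widehat{S}\subset K$ and invoking the second half of \eqref{e:estimate2}, namely $\sum_{\widehat{S}\in \mathcal{S},\,\widehat{S}\subset K} u(\widehat{S})\lesssim u(K)$, assembles to
\[
\sum_{S:\,\pi_{\mathcal{D}^{u}}^{1}(S)\subset K}\alpha_{t}(S) \lesssim \delta^{-\sigma_{0}t}\, \mathcal{V}^{2}\, u(K),
\]
which is the claim, with constant independent of both $K$ and $t$.

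\textbf{Main obstacle.} The essential technical point is the transference in Step 1: one must verify that the $r$-goodness of $S'$ in every dyadic grid containing $S$ automatically produces the Whitney-type separation $\mathrm{dist}(S',e(S))\gtrsim l(S')^{\lambda}l(S)^{1-\lambda}$ demanded by Lemma \ref{l:4.4}, so that the decay $\delta^{-\sigma_{0}t}$ is extracted cleanly rather than the weaker gain $\delta^{t}$ afforded by mere containment. A secondary bookkeeping point is to ensure that the two towers (the grid-depth summation inside each $S$ producing $v(S)$, and the stopping-tree summation producing $u(K)$ via \eqref{e:estimate2}) fit together without double-counting across the two Coronas $\mathcal{C}^{u}(\widehat{S})$ and $\mathcal{C}^{v}(\widehat{S})$.
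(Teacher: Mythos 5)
Your overall strategy (the $L^{2}$ formulation of Lemma \ref{l:4.3}, the Poisson transference of Lemma \ref{l:4.4}, and the pivotal condition together with the packing estimate \eqref{e:estimate2}) is the same one the paper uses, and the pointwise bound $\alpha_t(S)\lesssim\delta^{-\sigma_0 t}\Phi(S,u\mathbf{1}_{\widehat S\setminus S})$ from Step 1 is sound in spirit (modulo a small imprecision: the $L^{2}$ estimate should be applied at the maximal cubes $J\in\mathcal S(S')$ in the shifted corona rather than at $S'$ itself, since the separation $\operatorname{dist}(\partial S,\cdot)\gtrsim l(\cdot)$ in Lemma \ref{l:4.3} is what goodness gives you for the small $J$'s, not necessarily for $S'$). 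But Step 2 contains a genuine gap. You group the sum by the $\mathcal S$-parent $\widehat S$ and restrict to $\widehat S\subset K$, which covers only those $S$ whose stopping parent lies inside $K$. The statement, however, sums over all $S$ with dyadic parent $\pi_{\mathcal D^u}^{1}(S)\subset K$. When $K\notin\mathcal S$ this set contains the first generation $\mathcal G_1$ of maximal stopping cubes strictly inside $K$, and for those the $\mathcal S$-parent $\widehat S$ \emph{strictly contains} $K$. Your chain then produces $\mathcal V^{2} u(\widehat S)$, not $u(K)$, and there is no general reason that $u(\widehat S)\lesssim u(K)$.

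This is precisely the point where the paper invokes Lemma \ref{l:6.1} separately for $\mathcal G_1$: that lemma exploits the \emph{failure} of the stopping criterion \eqref{e:stopcube} at $K$ (equivalently, that $K\in\mathcal C^{u}(\widehat S)$ rather than $K\in\mathcal S$) to convert the bound $\mathcal V^{2}u(\widehat S)$ into $\mathcal V^{2}u(K)$; after $\mathcal G_1$ is handled, the inner generations are controlled by the pivotal condition and \eqref{e:estimate2} just as you do. So your argument proves the estimate only when $K$ happens to be a stopping cube. To repair it you must isolate $\mathcal G_1$, split $\mathbf 1_{\widehat S\setminus S}=\mathbf 1_{\widehat S\setminus K}+\mathbf 1_{K\setminus S}$, use the negation of \eqref{e:stopcube} (i.e.\ $\Psi(K,\mathbf 1_{\widehat S}u)<4\mathcal V^{2}u(K)$) for the $\widehat S\setminus K$ piece, and then proceed with the pivotal condition for the remaining $K\setminus S$ piece and for the deeper generations. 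Your ``main obstacle'' paragraph identifies the goodness/Whitney issue feeding Lemma \ref{l:4.4} — that part is fine and is the same subtlety the paper faces — but it misses this separate, essential first-generation issue.
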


In the estimate \eqref{e:estimate4}, we need to observe the fact that the dyadic
parent $\pi _{\mathcal{D}^{v}}^{1}(S)$ of $S$ appears. In fact the role of
dyadic parents is revealed in the next proof. We use the negation of %
\eqref{e:stopcube} when $\pi _{\mathcal{D}^{u}}^{1}(S)\notin \mathcal{S}$, and
otherwise we use \eqref{e:nostop}.

\begin{proof}
We will first show that 
\begin{equation*}
\sum\limits_{S\in \mathcal{S}(\hat{S})}\alpha _{t}(S)\leq \delta ^{-\sigma_{0} t}%
\mathcal{V}^{2}u(\hat{S}),\quad \hat{S}\in \mathcal{S}.
\end{equation*}%
For this proof, we will set $\mathcal{S}_{t}(S):=\{S^{\prime }\in \mathcal{S}%
:\pi _{S}^{t}(S^{\prime })=S\}$, using this notation for $S\in \mathcal{S}(%
\hat{S})$. We apply the $L^{2}$ formulation estimate \eqref{e:l2form} of
Lemma \ref{l:4.3} to the expression $\alpha _{t}$.
\begin{align}
\label{e:maximalset}
\mathcal{S}(S') := \{J \in \mathcal{C}^v (S):\textnormal{$J$ is maximal with}\hspace{0.1 cm} J\subset S',l(J) < \delta^r l(S')\}.
\end{align}

From the definition above we have $l(J)<\delta ^{r}l(S^{\prime })$ for all $%
J\in \mathcal{S}(S^{\prime })$ and as all Haar functions have mean zero, we
can apply the $L^{2}$ formulation \eqref{e:l2form} of Lemma \ref{l:4.3}. Using this, we see
that 
\begin{equation*}
\alpha _{t}(S)\lesssim \sum\limits_{S^{\prime }\in \mathcal{S}%
_{t}(S)}\sum\limits_{J\in \mathcal{S}(S^{\prime })}\Phi (J,1_{\hat{S}\setminus
S}u).
\end{equation*}%
And by using \eqref{e:nostop} we get 
\begin{equation}
\label{e:decay}
\hspace{0.5cm}\sum_{S\in \mathcal{S}(\hat{S})}\alpha _{t}(S)\lesssim
\sum_{S\in \mathcal{S}(\hat{S})}\sum_{S^{\prime }\in \mathcal{S}%
_{t}(S)}\sum_{J\in \mathcal{S}(S^{\prime })}\Phi (J,1_{\hat{S}\setminus
S}u)\lesssim \delta ^{-\sigma_{0}t}\mathcal{V}^{2} \sum_{S\in \mathcal{S}(\hat{S})} u(S)
\lesssim \delta ^{-\sigma_{0} t}\mathcal{V}^{2}u(\hat{S}).
\end{equation}%
The last inequality follows from hypothesis \eqref{e:nostop}.

Now fix $K$ as in \eqref{e:estimate4} and let $\hat{S}\in \mathcal{S}$ be the
stopping cube such that $K\in \mathcal{C}^{u}(\hat{S}).$ Let $\mathcal{G}%
_{1}:=\{S_{i}\}_{i}$ be the maximal cubes from $\mathcal{S}$ that are
strictly contained in $K$. Inductively we define the $(k+1)^{st}$ generation 
$\mathcal{G}_{k+1}$ to consist of the maximal cubes from $\mathcal{S}$
that are strictly contained in some $k^{th}$ generation cube $S\in 
\mathcal{G}_{k}$. Inequality \eqref{e:decay} shows that 
\begin{equation*}
\sum\limits_{S\in \mathcal{G}_{k+1}}\alpha _{t}(S)\lesssim \delta ^{-\sigma_{0} t}%
\mathcal{V}^{2}\sum\limits_{S\in \mathcal{G}_{k+1}}u(S).
\end{equation*}%
We have from \eqref{e:estimate2} that 
\begin{equation*}
\sum\limits_{k=1}^{\infty }\sum\limits_{S\in \mathcal{G}_{k}}u(S)\lesssim
\sum\limits_{S\in \mathcal{G}_{1}}u(S)\lesssim u(K).
\end{equation*}%
This will be all we need for the case $K=\hat{S}$. For the case $K\neq \hat{S%
}$ we will use Lemma \ref{l:6.1} to control the first generation of
cubes $S$ in $\mathcal{G}_{1}$: 
\begin{equation*}
\sum\limits_{S\in \mathcal{G}_{1}}\alpha _{t}(S)\lesssim \delta ^{-\sigma_{0}
t}u(K).
\end{equation*}%
Indeed we will apply Lemma \ref{l:6.1} with $\hat{Q}_{0}=\hat{S}$, $Q_{0}=K$, 
$\{Q_{r}\}_{r\geq 1}=\mathcal{G}_{1}$ and $\displaystyle\{J_{r,s}\}_{s\geq 1}=\bigcup
_{S^{\prime }\in \mathcal{S}_{t}(S^{\prime })}\mathcal{S}(S^{\prime })$.

When $K\neq \hat{S}$ we have 
\begin{align*}
\sum\limits_{S\in \mathcal{S}:\pi _{\mathcal{D}^{u}}^{1}(S)\subset K}\alpha
_{t}(S)& =\sum\limits_{S\in \mathcal{G}_{1}}\alpha
_{t}(S)+\sum\limits_{k=1}^{\infty }\sum\limits_{S\in \mathcal{G}%
_{k+1}}\alpha _{t}(S)\lesssim \delta ^{-\sigma_{0} t}u(K)+\delta ^{-\sigma_{0} t}%
\mathcal{V}\sum\limits_{k=1}^{\infty }\sum\limits_{S\in 
\mathcal{G}_{k}}u(S) \\
& \lesssim \delta ^{\gamma t}\mathcal{V}^{2}u(K).
\end{align*}%
If $K=\hat{S}$ we have $\mathcal{G}_{0}=\{\hat{S}\}$ and we get the
estimate 
\begin{align*}
\sum\limits_{S\in \mathcal{S}:\pi _{\mathcal{D}^{u}}^{1}(S)\subset \hat{S}%
}\alpha _{t}(S)& =\sum\limits_{S\in \mathcal{G}_{1}}\alpha
_{t}(S)+\sum\limits_{k=1}^{\infty }\sum_{S\in \mathcal{G}_{k+1}}\alpha
_{t}(S) \\
& \lesssim \delta ^{-\sigma_{0} t}\mathcal{V}^{2}\sum\limits_{k=1}^{\infty }\sum\limits_{S\in \mathcal{G}_{k}}u(S) \\
& \lesssim \delta ^{-\sigma_{0} t}\mathcal{V}^{2}u(\hat{S}).
\end{align*}

The proof of Theorem \ref{thm off-dia} is complete.
\end{proof}

We need a Carleson measure estimate that is a common variant of \eqref{e:estimate2}
and \eqref{e:estimate4}. We define 
\begin{equation*}
\beta (S):=\Vert P_{S}^{v}T(u1_{\pi _{\mathcal{D}^{u}}^{1}(S)})\Vert
_{L^{2}(v)}^{2}.
\end{equation*}

\begin{theorem}
\label{t:bt} We have the following Carleson measure estimate 
\begin{equation*}
\sum\limits_{S\in \mathcal{S}:\pi _{\mathcal{D}^{u}}^{1}(S)\subset K}\beta
(S)\lesssim (\mathcal{T}^{2}+\mathcal{V}^{2})u(K).
\end{equation*}
\end{theorem}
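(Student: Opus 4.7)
The plan is to split
\[
u\mathbf{1}_{\pi_{\mathcal{D}^u}^1(S)} \;=\; u\mathbf{1}_S + u\mathbf{1}_{\pi_{\mathcal{D}^u}^1(S)\setminus S}
\]
and control the two resulting pieces of $\beta(S)$ separately, the first by the testing constant $\mathcal{T}$ and the second by the pivotal constant $\mathcal{V}$. Writing $\pi^1:=\pi_{\mathcal{D}^u}^1$ for brevity, this gives $\beta(S)\le 2\beta_1(S)+2\beta_2(S)$ where $\beta_1(S):=\|\mathcal{P}_S^v T(u\mathbf{1}_S)\|_{L^2(v)}^2$ and $\beta_2(S):=\|\mathcal{P}_S^v T(u\mathbf{1}_{\pi^1(S)\setminus S})\|_{L^2(v)}^2$.

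For $\beta_1(S)$, since every Haar function in $\mathcal{C}^v(S)$ is supported on $S$, the orthogonal projection $\mathcal{P}_S^v$ is bounded on $L^2(v)$ by the restriction to $S$, hence $\beta_1(S)\le \int_S|T(u\mathbf{1}_S)|^2\,dv\le \mathcal{T}^2 u(S)$ by the forward testing hypothesis. The constraint $\pi^1(S)\subset K$ forces $S\subsetneq K$, so the second half of \eqref{e:estimate2} yields
\[
\sum_{S\in\mathcal{S}:\,\pi^1(S)\subset K}\beta_1(S)\;\lesssim\; \mathcal{T}^2\!\!\sum_{S\in\mathcal{S}:\,S\subsetneq K}\!\!u(S)\;\lesssim\; \mathcal{T}^2 u(K).
\]

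For $\beta_2(S)$, I would expand in the Haar basis and exploit the fact that each $h_J^k$ with $J\in\mathcal{C}^v(S)$ has $v$-integral zero and, by goodness inside the enclosing cube $\pi^1(S)$, satisfies the distance hypothesis of Lemma \ref{l:4.3} applied with $J\subset S\subset\pi^1(S)$. This yields $|\langle T(u\mathbf{1}_{\pi^1(S)\setminus S}),h_J^k\rangle_v|^2\lesssim \Phi(J,u\mathbf{1}_{\pi^1(S)\setminus S})$. Lemma \ref{l:4.4} (with $J$, $S$, $\pi^1(S)$ in place of $S$, $Q$, $\hat Q$) then upgrades this to
\[
\Phi(J,u\mathbf{1}_{\pi^1(S)\setminus S})\;\lesssim\; v(J)\,(l(J)/l(S))^{-2\sigma_0}\,K(S,u\mathbf{1}_{\pi^1(S)\setminus S})^2.
\]
Summing over levels $l(J)=\delta^j l(S)$ with $j\ge r$ produces a convergent geometric series since $-\sigma_0>0$, with the $v$-measures telescoping to $v(S)$; therefore $\beta_2(S)\lesssim \Phi(S,u\mathbf{1}_{\pi^1(S)\setminus S})$.

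The remaining step is to sum $\Phi(S,u\mathbf{1}_{\pi^1(S)\setminus S})$ over the indicated $S$, and I would organize this by the $\mathcal{S}$-parent $\hat S:=\pi_{\mathcal{S}}^1(S)$, splitting into two cases. When $\hat S\subseteq K$, we have $\pi^1(S)\setminus S\subseteq \hat S$, so $\Phi(S,u\mathbf{1}_{\pi^1(S)\setminus S})\le \Phi(S,u\mathbf{1}_{\hat S})$; since $\mathcal{S}(\hat S)$ is an $r$-good subpartition of $\hat S$, the pivotal condition \eqref{e:nostop} gives $\sum_{S\in\mathcal{S}(\hat S)}\Phi(S,u\mathbf{1}_{\hat S})\le\mathcal{V}^2 u(\hat S)$, and then \eqref{e:estimate2} yields $\sum_{\hat S\subseteq K}u(\hat S)\lesssim u(K)$. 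When instead $\hat S\supsetneq K$, there is a unique such $\hat S$, the relevant $S\in\mathcal{S}(\hat S)$ lie in $K$ and form a good subpartition of $K$, and $\pi^1(S)\setminus S\subseteq K$, so applying the pivotal condition with ambient cube $K$ produces $\mathcal{V}^2 u(K)$ directly. The main obstacle will be verifying the goodness hypotheses cleanly, in particular handling any ``non-primary'' cubes $J\in\mathcal{C}^v(S)$ with $l(J)$ not much smaller than $l(S)$ (inherited from the shifted corona definition), which should contribute only finitely many terms per $S$ and can be absorbed by a direct testing estimate.
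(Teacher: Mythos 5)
Your decomposition $\beta(S)\le 2\beta_1(S)+2\beta_2(S)$ and the testing estimate for $\beta_1$ are exactly the paper's. For $\beta_2$, the paper instead routes through Lemma~\ref{l:6.1}: it fixes $K=\pi_{\mathcal{D}^u}^1(S_0)$, introduces the auxiliary collections $\mathcal{R}$ (maximal $\pi_{\mathcal{D}^u}^1(S)\subsetneq K$) and $\mathcal{S}(R)$, bounds $\beta_2(S)\le\sum_{J\in\mathcal{S}(S)}\|\tilde{\mathcal{P}}_J^v T(u\mathbf{1}_{\pi^1(S)\setminus S})\|_{L^2(v)}^2$, and then invokes the pre-packaged Carleson estimate \eqref{e:estimate1}. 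You instead unfold that lemma inline: applying Lemma~\ref{l:4.3} to each Haar coefficient and then Lemma~\ref{l:4.4} to climb from $J$ to $S$, you sum the resulting geometric series $\delta^{-2\sigma_0 j}$ (convergent since $\sigma_0<0$) to arrive at the clean per-cube bound $\beta_2(S)\lesssim\Phi(S,u\mathbf{1}_{\pi^1(S)\setminus S})$, and only then invoke the pivotal condition, organizing by the stopping parent $\hat S=\pi_{\mathcal{S}}^1(S)$ rather than the dyadic parent. The two routes exploit the same three ingredients (goodness/Poisson decay, \eqref{e:nostop}, \eqref{e:estimate2}), but your version is more transparent: it isolates an explicit pointwise estimate $\beta_2(S)\lesssim\Phi(S,\cdot)$, whereas the paper only obtains the summed bound, and its ``careful arrangement of the collections'' is left implicit. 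The price you pay is that you must redo the bookkeeping of the two regimes $\hat S\subsetneq K$ vs.\ $\hat S\supseteq K$, which Lemma~\ref{l:6.1} already hides. Your closing worry about ``non-primary'' cubes $J\in\mathcal{C}^v(S)$ with $l(J)$ not small compared to $l(S)$ is actually moot: the paper states explicitly after defining the corona projections that $\mathcal{P}_{S'}^v g$ projects only onto cubes $S$ with $l(S)\le\delta^r l(S')$, so every $J$ appearing in $\beta_2(S)$ satisfies the goodness separation hypothesis needed for Lemmas~\ref{l:4.3} and~\ref{l:4.4}; a separate testing-absorbtion step is not required.
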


\begin{proof}
Using the decomposition $\pi _{\mathcal{D}^{u}}^{1}(S)=S\cup \{\pi _{%
\mathcal{D}^{u}}^{1}(S)\setminus S\}$, we write $\beta (S)\leq 2(\beta
_{1}(S)+\beta _{2}(S))$ where 
\begin{equation*}
\beta _{1}(S):=\Vert P_{S}^{v}T(u1_{S})\Vert _{L^{2}(v)}^{2}\quad\textnormal{ and }\quad \beta _{2}(S):=\Vert P_{S}^{v}T(u1_{\pi _{\mathcal{D}^{u}}^{1}(S)\setminus
S})\Vert _{L^{2}(v)}^{2}.
\end{equation*}%
We have by the testing condition $\beta _{1}(S)\leq \mathcal{T}^{2}u(S)$, so now
by \eqref{e:estimate2}, we need only consider the Carleson measure norm of the
terms $\beta _{2}(S)$.

Now we will fix an cube $K$ of the form $K=\pi _{\mathcal{D}%
^{u}}^{1}(S_{0})$ for some $S_{0}\in \mathcal{S}$. Let $\mathcal{R}$ be the
maximal cubes of the form $\pi _{\mathcal{D}^{u}}^{1}(S)\subsetneq K$
and for $R\in \mathcal{R}$, let $\mathcal{S}(R)$ be all cubes $S\in 
\mathcal{S}$ with $S\subset R$ and $S$ is maximal. Now by using the definition of $\Tilde{\mathcal{P}}_{S}^{v}(g)$
and \eqref{e:maximalset}, we can estimate 
\begin{equation*}
\sum\limits_{R\in \mathcal{R}}\sum\limits_{S\in \mathcal{S}(R)}\beta
_{2}(S)\lesssim \sum\limits_{R\in \mathcal{R}}\sum\limits_{S\in \mathcal{S}%
(R)}\sum\limits_{J\in \mathcal{S}(S)}\Vert \tilde{P}_{J}^{v}T(u1_{\pi _{%
\mathcal{D}^{u}}^{1}(S)\setminus S})\Vert _{L^{2}(v)}^{2}\lesssim \mathcal{V}^{2}u(K).
\end{equation*}%
By careful arrangement of the collections $\mathcal{R},\mathcal{S}(R)$ and $%
\mathcal{S}(S)$ we have applied \eqref{e:estimate1} in the last step. Here we use
the same strategy as we used in the proof of Theorem \ref{t:estimate}.

We argue this inequality is enough to conclude the Theorem. Suppose that $%
S^{\prime }\in \mathcal{S}$, with $S^{\prime }\subset K$, but $S^{\prime }$
is not in any collection $\mathcal{S}(R)$ for $R\in \mathcal{R}$. It follows
that $S^{\prime }\subsetneq S$ for some $S\in \mathcal{S}(R)$ and $R\in 
\mathcal{R}$. This implies that the Carleson measure estimate \eqref{e:estimate2}
completes the proof.
\end{proof}

We collect one last Carleson measure estimate. Define 
\begin{equation*}
\gamma (S):=\Vert P_{S}^{v}T(u1_{\pi _{\mathcal{S}}^{1}(S)\setminus \pi _{%
\mathcal{D}^{u}}^{1}(S})\Vert _{L^{2}(v)}^{2}.
\end{equation*}

\begin{theorem}
\label{t:6.5} We have the estimate 
\begin{equation*}
\sum\limits_{S\in \mathcal{S}:\pi _{\mathcal{D}^{u}}^{1}(S)\subset K}\gamma
(S)\lesssim \mathcal{V}^{2}u(K).
\end{equation*}
\end{theorem}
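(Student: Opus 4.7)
The plan is to adapt the strategy of Theorems \ref{t:estimate} and \ref{t:bt}, combining the $L^{2}$ formulation \eqref{e:l2form} of Lemma \ref{l:4.3} with the pivotal hypothesis \eqref{e:nostop} and the stopping-time Carleson estimate \eqref{e:estimate2}. First, for each $S\in\mathcal{S}$ I expand the projection
\[
P_{S}^{v}T\bigl(u\,1_{E(S)}\bigr)=\sum_{J\in\mathcal{C}^{v}(S)}\sum_{k=1}^{M_{J}-1}\langle T(u\,1_{E(S)}),h_{J}^{k}\rangle_{v}\,h_{J}^{k},\qquad E(S):=\pi_{\mathcal{S}}^{1}(S)\setminus \pi_{\mathcal{D}^{u}}^{1}(S).
\]
Every $J\in\mathcal{C}^{v}(S)$ that contributes lies inside $S\subset\pi_{\mathcal{D}^{u}}^{1}(S)$ and is therefore disjoint from $E(S)$; moreover $J$ is good and each $h_{J}^{k}$ has mean zero against $v$. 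Thus \eqref{e:l2form} gives
\[
\gamma(S)\ \lesssim\ \sum_{J\in\mathcal{C}^{v}(S)}\Phi\bigl(J,\,1_{E(S)}u\bigr)\ \le\ \sum_{J\in\mathcal{C}^{v}(S)}\Phi\bigl(J,\,1_{\pi_{\mathcal{S}}^{1}(S)}u\bigr),
\]
the last step using monotonicity of $K(J,\cdot)$ in its argument.

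Next I would reindex the sum by stopping parent. Writing $\widehat{S}=\pi_{\mathcal{S}}^{1}(S)$ and grouping all $S\in\mathcal{S}(\widehat{S})$, the collection
\[
\mathcal{F}(\widehat{S}):=\bigcup_{S\in\mathcal{S}(\widehat{S})}\mathcal{C}^{v}(S)
\]
consists of $r$-good $\mathcal{D}^{v}$-cubes contained in $\widehat{S}$ with essentially bounded overlap. Applying the pivotal condition \eqref{e:nostop} to (a refinement of) this subpartition of $\widehat{S}$ yields
\[
\sum_{S\in\mathcal{S}(\widehat{S})}\gamma(S)\ \lesssim\ \sum_{J\in\mathcal{F}(\widehat{S})}\Phi\bigl(J,\,1_{\widehat{S}}u\bigr)\ \lesssim\ \mathcal{V}^{2}\,u(\widehat{S}).
\]

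To pass from individual stopping parents to an arbitrary $K\in\mathcal{D}^{u}$, I would organize the argument exactly as in the proof of Theorem \ref{t:bt}: let $\mathcal{R}$ be the maximal cubes of the form $\pi_{\mathcal{D}^{u}}^{1}(S_{0})\subsetneq K$ with $S_{0}\in\mathcal{S}$, and for each $R\in\mathcal{R}$ let $\mathcal{S}(R)$ be the maximal stopping cubes contained in $R$. Summing the previous estimate over $\widehat{S}\in\bigcup_{R}\mathcal{S}(R)$ and iterating through the generations of stopping cubes inside $K$ via \eqref{e:estimate2} gives
\[
\sum_{\substack{S\in\mathcal{S}\\ \pi_{\mathcal{D}^{u}}^{1}(S)\subset K}}\gamma(S)\ \lesssim\ \mathcal{V}^{2}\sum_{\widehat{S}}u(\widehat{S})\ \lesssim\ \mathcal{V}^{2}\,u(K),
\]
which is the desired Carleson bound.

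The main obstacle will be justifying the application of \eqref{e:nostop} to $\mathcal{F}(\widehat{S})$: the shifted definition of $\mathcal{C}^{v}(S)$ means that a single cube $J$ may appear in $\mathcal{C}^{v}(S)$ for more than one $S\in\mathcal{S}(\widehat{S})$ (through the auxiliary clause involving cubes $S''\in\mathcal{S}(S)$), so one must either absorb this bounded overlap into the implicit constant, or else separate $\mathcal{F}(\widehat{S})$ into scales $l(J)=\delta^{j}l(\widehat{S})$ and invoke the pivotal condition against a genuine $r$-good subpartition (using Lemma \ref{l:4.4} to transfer any mismatched scales, just as in \eqref{e:decay}). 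Once that bookkeeping is in place the three-step reduction above delivers the stated estimate.
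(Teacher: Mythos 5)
Your overall strategy — $L^2$ formulation of Lemma~\ref{l:4.3}, monotonicity of $\Phi$, then a Carleson-type iteration over stopping generations — is the right skeleton, and it matches the paper's intention to ``repeat the argument of Theorem~\ref{t:bt}.'' However, there is a genuine gap in the step where you pass from stopping parents to the fixed cube $K$. When you write $\sum_{S\in\mathcal{S}(\widehat{S})}\gamma(S)\lesssim\mathcal{V}^{2}u(\widehat{S})$ with $\widehat{S}=\pi_{\mathcal{S}}^{1}(S)$, and then sum over $\widehat{S}\in\bigcup_{R}\mathcal{S}(R)$, you only capture those $S$ whose stopping parent lies strictly inside $K$. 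The first-generation cubes — the $S\in\mathcal{S}$ with $\pi_{\mathcal{D}^{u}}^{1}(S)\subset K$ but whose stopping parent $\widehat{S}=\pi_{\mathcal{S}}^{1}(S)$ contains $K$ — are either omitted from your index set or, if you do include that outer $\widehat{S}$, contribute $\mathcal{V}^{2}u(\widehat{S})$, which is \emph{not} $\lesssim u(K)$. This boundary layer is exactly the content of Lemma~\ref{l:6.1}, and the crucial ingredient you have not invoked is the \emph{negation} of the stopping inequality~\eqref{e:stopcube}: because the intermediate cubes in the corona $\mathcal{C}^{u}(\hat{Q}_{0})$ are not stopping cubes, $\Psi(Q_{0},1_{\hat{Q}_{0}}u)<4\mathcal{V}^{2}u(Q_{0})$, which is what lets the Carleson bound localize to $u(Q_{0})$ rather than to the (possibly much larger) $u(\hat{Q}_{0})$. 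Without this the pivotal condition alone only produces $u(\widehat{S})$ on the right, and the proof does not close.

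Two smaller remarks. First, your opening inequality $\gamma(S)\lesssim\sum_{J\in\mathcal{C}^{v}(S)}\Phi(J,1_{E(S)}u)$ sums $\Phi$ over every nested $J$ in the corona rather than over the maximal cubes $\mathcal{S}(S)$ of~\eqref{e:maximalset} via the projections $\tilde{\mathcal{P}}_{J}^{v}$ as in Lemma~\ref{l:6.1}. This is not wrong — Lemma~\ref{l:4.4} supplies a geometric gain of $\delta^{2j|\sigma_{0}|}$ at each scale $j$, and $2|\sigma_{0}|>1$ makes the scale sum converge — but you must carry that bookkeeping explicitly, because otherwise the scale-by-scale application of the pivotal condition you propose in the final paragraph gives an unbounded sum $\sum_{j}\mathcal{V}^{2}u(\widehat{S})$. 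Second, the bounded-overlap concern you raise about the shifted corona is a red herring; the essential obstruction to applying~\eqref{e:nostop} to $\mathcal{F}(\widehat{S})$ is that the corona cubes live at infinitely many scales and are nested, which is a scale-decay problem, not an overlap problem. Filling both gaps would in effect re-derive Lemma~\ref{l:6.1}, which is what the paper's proof implicitly does by invoking Theorem~\ref{t:bt}.
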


\begin{proof}
We can take $K=\pi _{\mathcal{D}^{u}}^{1}(S_{0})$ for some $S_{0}\in 
\mathcal{S}$, and we can assume that $K\notin \mathcal{S}$ as otherwise we
are applying the $T$ to the zero function. We then repeat the argument as
in the previous proof. We use a similar construction for the proof here as in
Theorem \ref{t:bt}.
\end{proof}

\section{The Paraproduct Terms}
\label{s:Paraproduct}

We are going to prove bounds on the paraproduct term $A_2^4$ now. Before we
prove the bounds, we will reorganize the sum in \eqref{e:para} according to
the corona decomposition. We need to observe that for $S\in \mathcal{C}%
^v(S^{\prime })$ and $S \subset Q$, we need not have $Q \in \mathcal{C}%
^u(S^{\prime })$. It could be the case that $Q\in \mathcal{C}^u(\pi_{%
\mathcal{S}}^{t}(S^{\prime }))$ for some ancestor $\pi_{\mathcal{S}%
}^{t}(S^{\prime })$ of $S^{\prime }$. Remember the ancestor $\pi_{\mathcal{S}%
}^{t}(S^{\prime })$ is defined only for $1\leq t \leq \rho(S^{\prime })$.

Now we will split the sum into two parts $A_2^4 = A_1^5 +A_2^5$ where 
\begin{align}  \label{e:7.2}
A_1^5 &:= \sum\limits_{S^{\prime }\in \mathcal{S}} \sum\limits_{\substack{ %
(Q,S)\in \mathcal{P}(S^{\prime })  \\ S \in \mathcal{C}^v(S^{\prime })}} 
\mathbf{E}_{Q_S}^u(\Delta_Q^u f)\langle T(1_{S^{\prime }}u), \Delta_S^u
g\rangle_v; \\
A_2^5 &:= \sum\limits_{S^{\prime }\in \mathcal{S}\setminus
Q_0}\sum\limits_{t=1}^{\rho(S^{\prime })} \sum\limits_{\substack{ (Q,S)\in 
\mathcal{P}(\pi_{\mathcal{S}}^{t}(S^{\prime }))  \\ S\in \mathcal{C}%
^v(S^{\prime })}} \mathbf{E}_{Q_S}^u(\Delta_Q^u f)\langle T(1_{\pi_{\mathcal{%
S}}^{t}(S^{\prime })}u), \Delta_S^u g\rangle_v.
\end{align}
Observe that in $A_1^5$ we consider the case where both $Q$ and $S$ are
controlled by the same stopping cube. Whereas in $A_2^5$, $(Q,S) \in 
\mathcal{P}(\pi_{\mathcal{S}}^{t}(S^{\prime })) $, where $\pi_{\mathcal{S}%
}^{t}(S^{\prime })$ is $t$-fold parent of $S^{\prime }$ in the grid $%
\mathcal{S}$.

We will now show 
\begin{equation*}
|A_{1}^{5}|\lesssim (\mathcal{T}+\mathcal{V})\Vert f\Vert
_{L^{2}(u)}\Vert g\Vert _{L^{2}(v)}.
\end{equation*}%
We will have to further decompose $A_{2}^{5}$.

\subsection{The first Paraproduct Term $A_{1}^{5}$.}

Fix $S^{\prime }\in \mathcal{S}$ and $S\in \mathcal{C}^{v}(S^{\prime })$.
Observe that we have the following telescoping indentity:
\begin{equation}
\label{e:telescope}
\sum_{Q:(Q,S)\in \mathcal{P}(S^{\prime })}\mathbf{E}_{Q_{S}}^{u}(\Delta
_{Q}^{u}f)=\mathbf{E}_{Q_{S,\ast }}^{u}f-\mathbf{E}_{\pi _{_{\mathcal{D}%
}}(S^{\prime })}^{u}f.
\end{equation}%
Here $Q_{S,\ast }$ is the minimal member of $\mathcal{C}^{u}(S^{\prime })$
that contains $S$ and $l(S)<\delta ^{r}l(Q)$. As $S$ is good, such cubes
exist. So we have 
\begin{align}
A_{1}^{5}& =\sum\limits_{S^{\prime }\in \mathcal{S}}A_{1}^{5}(S^{\prime });
\label{e:7.6} \\
A_{1}^{5}(S^{\prime })& :=\sum\limits_{S\in \mathcal{C}^{v}(S^{\prime })}(%
\mathbf{E}_{Q_{S,\ast }}^{u}f-\mathbf{E}_{\pi _{\mathcal{D}}(S^{\prime
})}^{u}f)\langle T(1_{S^{\prime }}u),\Delta _{S}^{u}g\rangle _{v}.
\end{align}

We now give our first paraproduct estimate.

\begin{lemma}
\label{l:7.1} We have the following estimate 
\begin{equation*}
A_{1}^{5}(S^{\prime })\lesssim (\mathcal{T}+\mathcal{V})\Vert \mathcal{P}_{S^{\prime }}^{u}f-1_{S^{\prime }}\mathbf{E}_{\pi _{%
\mathcal{D}}(S^{\prime })}^{u}f\Vert _{L^{2}(u)}\Vert \mathcal{P}_{S^{\prime
}}^{v}g\Vert _{L^{2}(v)},\hspace{1cm}S^{\prime }\in \mathcal{S}.
\end{equation*}%
We have defined the projections appearing on the righthand side in Section \ref{s6}.
\end{lemma}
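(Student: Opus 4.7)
The plan is to cast $A_1^5(S')$ as a classical paraproduct and estimate it via the Carleson Embedding Theorem (Theorem \ref{t:2.5}) fueled by the Carleson measure bound \eqref{e:estimate3}. First, I would set
\begin{equation*}
F := \mathcal{P}_{S'}^u f - 1_{S'}\mathbf{E}_{\pi_{\mathcal{D}}(S')}^u f,
\end{equation*}
and observe that for every $S \in \mathcal{C}^v(S')$ the minimal containing cube $Q_{S,\ast}$ lies in $\mathcal{C}^u(S')$, a consequence of the goodness of $S$ and the gap $l(S) < \delta^r l(Q_{S,\ast})$. Applying the telescoping identity \eqref{e:telescope} to the corona expansion of $\mathcal{P}_{S'}^u f$ on $Q_{S,\ast}$ then rewrites each scalar coefficient in $A_1^5(S')$ as
\begin{equation*}
\mathbf{E}_{Q_{S,\ast}}^u f - \mathbf{E}_{\pi_\mathcal{D}(S')}^u f = \mathbf{E}_{Q_{S,\ast}}^u F.
\end{equation*}

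Next, I would use self-adjointness of the $v$-Haar projection to write $\langle T(1_{S'}u), \Delta_S^v g\rangle_v = \langle \Delta_S^v T(1_{S'}u), \Delta_S^v g\rangle_v$ and then apply Cauchy--Schwarz twice, first in the Haar components and then in $S \in \mathcal{C}^v(S')$, obtaining
\begin{equation*}
|A_1^5(S')| \leq \Biggl(\sum_{S \in \mathcal{C}^v(S')} \bigl|\mathbf{E}_{Q_{S,\ast}}^u F\bigr|^2\, \bigl\|\Delta_S^v T(1_{S'}u)\bigr\|_{L^2(v)}^2\Biggr)^{1/2} \bigl\|\mathcal{P}_{S'}^v g\bigr\|_{L^2(v)},
\end{equation*}
where the second factor emerges from $v$-orthogonality of the Haar system. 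Reindexing the first factor by $Q = Q_{S,\ast}$ reduces the lemma to the weighted estimate
\begin{equation*}
\sum_{Q \in \mathcal{C}^u(S')} |\mathbf{E}_Q^u F|^2\, a_Q \lesssim (\mathcal{T}^2 + \mathcal{V}^2)\, \|F\|_{L^2(u)}^2, \qquad a_Q := \sum_{\substack{S \in \mathcal{C}^v(S') \\ Q_{S,\ast} = Q}} \bigl\|\Delta_S^v T(1_{S'}u)\bigr\|_{L^2(v)}^2,
\end{equation*}
which by Theorem \ref{t:2.5} follows from the Carleson packing $\sum_{Q \subset K} a_Q \lesssim (\mathcal{T}^2 + \mathcal{V}^2) u(K)$ for every cube $K$. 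Since $Q_{S,\ast} \subset K$ forces $S \subset K$ and $l(S) < \delta^r l(Q_{S,\ast}) \leq \delta^r l(K)$, this packing condition is precisely the content of \eqref{e:estimate3}.

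The principal obstacle, I expect, is executing the telescoping rewrite cleanly: one must track the interaction between the $u$-grid $\mathcal{D}^u$, the shifted $v$-corona $\mathcal{C}^v(S')$, and the assignment $S \mapsto Q_{S,\ast}$, justifying that $Q_{S,\ast}$ really belongs to $\mathcal{C}^u(S')$ and that the $\delta^r$-gap places the resulting sums within the scope of \eqref{e:estimate3}. Once the bookkeeping is in place, the Carleson Embedding Theorem delivers the two constants $\mathcal{T}$ (from the diagonal testing contribution to \eqref{e:estimate3}) and $\mathcal{V}$ (from the pivotal contribution) appearing in the lemma.
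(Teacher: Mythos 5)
Your proposal is correct and follows essentially the same route as the paper: the paper likewise rewrites the coefficient as $\mathbf{E}_{Q}^{u}(\mathcal{P}_{S'}^{u}f-\mathbf{E}_{\pi_{\mathcal{D}}(S')}^{u}f)$, groups the shifted Haar pieces by the assignment $S\mapsto Q_{S,\ast}$ (via the operator $L_Q^v$, whose norm squared is exactly your $a_Q$), applies Cauchy--Schwarz, and then invokes Theorem \ref{t:2.5} together with the Carleson packing bound \eqref{e:estimate3}. The only difference is presentational: you perform Cauchy--Schwarz over $S$ before reindexing by $Q$, whereas the paper forms $L_Q^v$ first and then applies Cauchy--Schwarz over $Q$; these are the same computation.
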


\begin{proof}
For $Q\in \mathcal{C}^{u}(S^{\prime })$, let us define 
\begin{equation*}
L_{Q}^{v}g:=\sum\limits_{S\in \mathcal{C}^{v}(S^{\prime }):Q_{S,\ast
}=Q}\Delta _{S}^{u}g.
\end{equation*}%
Now using Cauchy-Schwarz and the fact that $\mathbf{E}_{Q}^{u}f=\mathbf{E}%
_{Q}^{u}\mathcal{P}_{S^{\prime }}^{u}f$ and $L_{Q}^{v}g=L_{Q}^{v}\mathcal{P}%
_{S^{\prime }}^{v}g$, then we have by re-indexing

\begin{align*}
|A_{1}^{5}(S^{\prime })|& =\left\vert \sum\limits_{Q\in \mathcal{C}%
^{u}(S^{\prime })}(\mathbf{E}_{Q}^{u}\mathcal{P}_{S^{\prime }}^{u}f-\mathbf{E%
}_{\pi _{\mathcal{D}}(S^{\prime })}^{u}f)\langle T(1_{S^{\prime
}}u),L_{Q}^{v}\mathcal{P}_{S^{\prime }}^{v}g\rangle _{v}\right\vert \\
& \leq \left[ \sum\limits_{Q\in \mathcal{C}^{u}(S^{\prime })}|\mathbf{E}%
_{Q}^{u}\mathcal{P}_{S^{\prime }}^{u}f-\mathbf{E}_{\pi _{\mathcal{D}%
}(S^{\prime })}^{u}f|^{2}\Vert L_{Q}^{v}T(1_{S^{\prime }}u)\Vert
_{L^{2}(v)}^{2}\sum\limits_{Q\in \mathcal{C}^{u}(S^{\prime })}\Vert L_{Q}^{v}%
\mathcal{P}_{S^{\prime }}^{v}g\Vert _{L^{2}(v)}^{2}\right] ^{\frac{1}{2}} \\
& \leq \Vert \mathcal{P}_{S^{\prime }}^{v}g\Vert _{L^{2}(v)}\left[
\sum\limits_{Q\in \mathcal{C}^{u}(S^{\prime })}|\mathbf{E}_{Q}^{u}(\mathcal{P%
}_{S^{\prime }}^{u}f-\mathbf{E}_{\pi _{\mathcal{D}}(S^{\prime
})}^{u}f)|^{2}\Vert L_{Q}^{v}T(1_{S^{\prime }}u)\Vert _{L^{2}(v)}^{2}\right]
^{\frac{1}{2}}.
\end{align*}%
By the Carleson Embedding Theorem, Theorem \ref{t:2.5}, this last factor is at most $%
\Vert (\mathcal{P}_{S^{\prime }}^{u}f-\mathbf{E}_{\pi _{\mathcal{D}%
}(S^{\prime })}^{u}f)\Vert _{L^{2}(u)}$ times the Carleson measure norm of
the coefficients 
\begin{equation*}
\{\Vert L_{Q}^{v}T(1_{S^{\prime }}u)\Vert _{L^{2}(v)}^{2}:Q\in \mathcal{C}%
^{u}(S^{\prime })\}.
\end{equation*}%
Using \eqref{e:estimate3} in Theorem \ref{t:carltheorem1} we know this at most a constant
multiple of $\mathcal{T}+\mathcal{V}$, so the proof is
complete.
\end{proof}

Now using Lemma \ref{l:7.1} we get the desired estimate on $A_1^5$
\begin{align*}
|A_{1}^{5}|& \lesssim (\mathcal{T}+\mathcal{V})\sum\limits_{S^{\prime }\in \mathcal{S}}\Vert \mathcal{P}_{S^{\prime
}}^{u}f-1_{S^{\prime }}\mathbf{E}_{\pi _{\mathcal{D}}(S^{\prime
})}^{u}f\Vert _{L^{2}(u)}\Vert \mathcal{P}_{S^{\prime }}^{v}g\Vert
_{L^{2}(v)} \\
& \lesssim (\mathcal{T}+\mathcal{V})\left(
\sum\limits_{S^{\prime }\in \mathcal{S}}\Vert \mathcal{P}_{S^{\prime
}}^{u}f-1_{S^{\prime }}\mathbf{E}_{\pi _{\mathcal{D}}(S^{\prime
})}^{u}f\Vert _{L^{2}(u)}^{2}\sum\limits_{S^{\prime }\in \mathcal{S}}\Vert 
\mathcal{P}_{S^{\prime }}^{v}g\Vert _{L^{2}(v)}^{2}\right) ^{\frac{1}{2}} \\
& \lesssim (\mathcal{T}+\mathcal{V})\Vert f\Vert
_{L^{2}(u)}\Vert g\Vert _{L^{2}(v)}.
\end{align*}
That is because we have 
\begin{equation*}
\mathcal{P}_{S^{\prime }}^{u}f=\sum\limits_{Q\in \mathcal{C}^{u}(S^{\prime
})}\sum\limits_{\epsilon =1}^{M_{Q}-1}\langle f,h_{Q}^{\epsilon }\rangle
_{u}h_{Q}^{\epsilon }\quad \textnormal{ and }\quad \mathcal{P}_{S^{\prime
}}^{v}g=\sum\limits_{S\in \mathcal{C}^{v}(S^{\prime
})}\sum\limits_{k=1}^{M_{S}-1}\langle g,h_{S}^{k}\rangle _{v}h_{S}^{k},
\end{equation*}%
\begin{equation*}
\Vert \mathcal{P}_{S^{\prime }}^{v}g\Vert _{L^{2}(v)}\leq \Vert g\Vert
_{L^{2}(v)}\quad \textnormal{ and } \quad\hspace{0.2cm}\Vert \mathcal{P}_{S^{\prime
}}^{u}f\Vert _{L^{2}(u)}\leq \Vert f\Vert _{L^{2}(u)}.
\end{equation*}%
Also we have 
\begin{equation*}
\sum\limits_{S^{\prime }\in \mathcal{S}}\Vert \mathcal{P}_{S^{\prime
}}^{u}f-1_{S^{\prime }}\mathbf{E}_{\pi _{\mathcal{D}}(S^{\prime
})}^{u}f\Vert _{L^{2}(u)}^{2}\leq \sum\limits_{S^{\prime }\in \mathcal{S}%
}\Vert \mathcal{P}_{S^{\prime }}^{u}f\Vert
_{L^{2}(u)}^{2}+\sum\limits_{S^{\prime }\in \mathcal{S}}\Vert 1_{S^{\prime }}%
\mathbf{E}_{\pi _{\mathcal{D}}(S^{\prime })}^{u}f\Vert _{L^{2}(u)}^{2}
\end{equation*}%
and
\begin{align*}
\sum\limits_{S^{\prime }\in \mathcal{S}}\Vert 1_{S^{\prime }}\mathbf{E}_{\pi
_{\mathcal{D}}(S^{\prime })}^{u}f\Vert _{L^{2}(u)}^{2}&
=\sum\limits_{S^{\prime }\in \mathcal{S}}u(S^{\prime })|\mathbf{E}_{\pi _{%
\mathcal{D}}(S^{\prime })}^{u}f|^{2} \\
& \leq \sum\limits_{S^{\prime }\in \mathcal{S}}u(S^{\prime })(\mathbf{E}%
_{\pi _{S^{\prime }}}^{u}|f|)^{2} \\
& \lesssim \Vert M_{u}f\Vert _{L^{2}(u)}^{2}\\
&\lesssim \Vert f\Vert
_{L^{2}(u)}^{2}.
\end{align*}%
Here we are using \eqref{e:estimate2} to conclude that the maximal function $M_{u}$
dominates the sum above. This completes the proof.

\subsection{Telescoping Arguments}

We will now use telescoping arguments as in \eqref{e:telescope} for $A_{2}^{5}$.
For $S^{\prime }\in \mathcal{S}\setminus \{Q_{0}\}$, fixing a $S\in \mathcal{C}%
^{v}(S^{\prime })$ then summing over $Q$, we get 
\begin{align}
A_{2}^{5}(S^{\prime })& :=\sum\limits_{t=1}^{\rho (S^{\prime })}\sum\limits 
_{\substack{ (Q,S)\in \mathcal{P}(\pi _{\mathcal{S}}^{t}(S^{\prime }))  \\ %
S\in \mathcal{C}^{v}(S^{\prime })}}\mathbf{E}_{Q_{S}}^{u}(\Delta
_{Q}^{u}f)\langle T(1_{\pi _{\mathcal{S}}^{t}(S^{\prime })}u),\Delta
_{S}^{u}g\rangle _{v}  \notag  \label{e:7.10} \\
& =\sum\limits_{t=1}^{\rho (S^{\prime })}\sum\limits_{S\in \mathcal{C}%
^{v}(S^{\prime })}(\mathbf{E}_{\pi _{\mathcal{D}^{u}}^{2}(\pi _{\mathcal{S}%
}^{t-1}(S^{\prime }))}^{u}(f)-\mathbf{E}_{\pi _{\mathcal{D}^{u}}^{1}(\pi _{%
\mathcal{S}}^{t}(S^{\prime }))}^{u}(f))\langle T(1_{\pi _{\mathcal{S}%
}^{t}(S^{\prime })}u),\Delta _{S}^{u}g\rangle _{v}.
\end{align}%
This is because with $S\in \mathcal{C}^{v}(S^{\prime })$ fixed, the sum over 
$Q$ such that $(Q,S)\in \mathcal{P}(\pi _{\mathcal{S}}^{t}(S^{\prime }))$ is
a function of $S^{\prime }$ and $t$. The smallest cube that contributes to
the sum is $\pi _{\mathcal{D}^{u}}^{2}(\pi _{\mathcal{S}}^{t-1}(S^{\prime
})) $ which is the second parent of $\pi _{\mathcal{S}}^{t-1}(S^{\prime })$
and the largest cube that contributes to the sum is $\pi _{\mathcal{D}%
^{u}}^{1}(\pi _{\mathcal{S}}^{t}(S^{\prime }))$. Observe the sum over $S$ is
independent of the sum over $t$ in \eqref{e:7.10}. Below we will further
decompose $A_{2}^{5}(S^{\prime })$ by adding and subtracting a cancellative
term
\begin{align*}
A_{2}^{5}(S^{\prime })& =\sum\limits_{t=1}^{\rho (S^{\prime })}(%
\mathbf{E}_{\pi _{\mathcal{D}^{u}}^{2}(\pi _{\mathcal{S}}^{t-1}(S^{\prime
}))}^{u}(f)-\mathbf{E}_{\pi _{\mathcal{D}^{u}}^{1}(\pi _{\mathcal{S}%
}^{t-1}(S^{\prime }))}^{u}(f) \\
& +\mathbf{E}_{\pi _{\mathcal{D}^{u}}^{1}(\pi _{\mathcal{S}}^{t-1}(S^{\prime
}))}^{u}(f)-\mathbf{E}_{\pi _{\mathcal{D}^{u}}^{1}(\pi _{\mathcal{S}%
}^{t}(S^{\prime }))}^{u}(f))\langle T(1_{\pi _{\mathcal{S}%
}^{t}(S^{\prime })}u),\Delta _{S}^{u}g\rangle _{v} \\
& = A_{1}^{6}(S^{\prime })+ A_{22}^{5}(S^{\prime }),
\end{align*}%
where we have defined: 
\begin{align}
A_{1}^{6}(S^{\prime })& :=\sum\limits_{t=1}^{\rho (S^{\prime })}(%
\mathbf{E}_{\pi _{\mathcal{D}^{u}}^{2}(\pi _{\mathcal{S}}^{t-1}(S^{\prime
}))}^{u}(f)-\mathbf{E}_{\pi _{\mathcal{D}^{u}}^{1}(\pi _{\mathcal{S}%
}^{t-1}(S^{\prime }))}^{u}(f))\langle T(1_{\pi _{\mathcal{S}%
}^{t}(S^{\prime })}u),\Delta _{S}^{u}g\rangle _{v},
\label{e:7.12} \\
A_{22}^{5}(S^{\prime })& :=\sum\limits_{t=1}^{\rho (S^{\prime })}(\mathbf{E}_{\pi
_{\mathcal{D}^{u}}^{1}(\pi _{\mathcal{S}}^{t-1}(S^{\prime }))}^{u}(f)-%
\mathbf{E}_{\pi _{\mathcal{D}^{u}}^{1}(\pi _{\mathcal{S}}^{t}(S^{\prime
}))}^{u}(f))\langle T(1_{\pi _{\mathcal{S}%
}^{t}(S^{\prime })}u),\Delta _{S}^{u}g\rangle _{v}.
\end{align}%
Observe here that $ A_{22}^{5}$ is a telescoping term in itself, so
we can sum by parts to get the following 
\begin{equation*}
A_{22}^{5}(S^{\prime })=\mathbf{E}_{\pi _{\mathcal{D}%
^{u}}^{1}(S^{\prime })}^{u}(f)\langle T(1_{\pi _{\mathcal{S}%
}^{t}(S^{\prime })}u),\Delta _{S}^{u}g\rangle _{v} +\sum\limits_{t=1}^{\rho (S^{\prime })}\mathbf{E}_{\pi _{\mathcal{D}%
^{u}}^{1}(\pi _{\mathcal{S}}^{t}(S^{\prime }))}^{u}(f)T(1_{\pi _{\mathcal{S}}^{t+1}(S^{\prime })\setminus \pi _{%
\mathcal{S}}^{t}(S^{\prime })}u),\mathcal{P}_{S^{\prime }}^{v}g\rangle _{v} = A_{2}^{6}(S^{\prime}) + A_{3}^{6}(S^{\prime}).
\end{equation*}%
In the sum above for the missing term $\mathbf{E}_{\pi_{\mathcal{S}}^{\rho(S')}(S')}^u f = \mathbf{E}_{Q_0}^u f$ , where $Q_{0}$ is
the largest cube that was fixed, we are going to assume the expectation
is zero.

Combining the steps above we can now decompose $A_{2}^{5}$ as 
\begin{equation*}
A_{2}^{5}=A_{1}^{6}+A_{2}^{6}+A_{3}^{6}\textnormal{ where }%
A_{i}^{6}=\sum\limits_{S^{\prime }\in \mathcal{S}\setminus
Q_{0}}A_{i}^{6}(S^{\prime })\textnormal{ for }i=1,2,3,
\end{equation*}%
and 
\begin{align}
A_{1}^{6}(S^{\prime })& :=\sum\limits_{t=1}^{\rho (S^{\prime })}(\mathbf{E}%
_{\pi _{\mathcal{D}^{u}}^{2}(\pi _{\mathcal{S}}^{t-1}(S^{\prime }))}^{u}(f)-%
\mathbf{E}_{\pi _{\mathcal{D}^{u}}^{1}(\pi _{\mathcal{S}}^{t-1}(S^{\prime
}))}^{u}(f))\langle T(1_{\pi _{\mathcal{S}}^{t}(S^{\prime })}u),\mathcal{P}%
_{S^{\prime }}^{v}g\rangle _{v};  \label{e:7.16} \\
A_{2}^{6}(S^{\prime })& :=(\mathbf{E}_{\pi _{\mathcal{D}^{u}}^{1}(S^{\prime
})}^{u}f)\langle T(1_{\pi _{\mathcal{S}}^{t}(S^{\prime })}u),\mathcal{P}%
_{S^{\prime }}^{v}g\rangle _{v}; \\
A_{3}^{6}(S^{\prime })& :=\sum\limits_{t=1}^{\rho (S^{\prime })}(\mathbf{E}%
_{\pi _{\mathcal{D}^{u}}^{1}(\pi _{\mathcal{S}}^{t}(S^{\prime
}))}^{u}f)\langle T(1_{\pi _{\mathcal{S}}^{t+1}(S^{\prime })\setminus \pi _{%
\mathcal{S}}^{t}(S^{\prime })}u),\mathcal{P}_{S^{\prime }}^{v}g\rangle _{v}.
\end{align}%
Observe the expression $A_{1}^{6}$ has cancellative terms in both $f$ and $g$%
, so it is not a paraproduct term, while $A_{2}^{6}$ is a paraproduct
similar to $A_{1}^{5}$. The third term is also a paraproduct term. We will
now prove the following estimates for each of these terms: 
\begin{align}
|A_{1}^{6}|& \lesssim (\mathcal{T}+\mathcal{V})\Vert
f\Vert _{L^{2}(u)}\Vert g\Vert _{L^{2}(v)};  \label{e:a16} \\
|A_{2}^{6}|& \lesssim (\mathcal{T}+\mathcal{V})\Vert
f\Vert _{L^{2}(u)}\Vert g\Vert _{L^{2}(v)};\label{e:a26} \\
|A_{3}^{6}|& \lesssim \mathcal{V}\Vert f\Vert
_{L^{2}(u)}\Vert g\Vert _{L^{2}(v)}.\label{e:a36}
\end{align}%
We will begin the proof of these estimates above starting with the term $%
A_{3}^{6}$.

\subsection{The Paraproduct Term $A_{3}^{6}$}

Let us fix $t$ and define 
\begin{align*}
A_{3}^{6}(S^{\prime },t)& :=(\mathbf{E}_{\pi _{\mathcal{D}^{u}}^{1}(\pi _{%
\mathcal{S}}^{t}(S^{\prime }))}^{u}f)\langle T(1_{\pi _{\mathcal{S}%
}^{t+1}(S^{\prime })\setminus \pi _{\mathcal{S}}^{t}(S^{\prime })}u),%
\mathcal{P}_{S^{\prime }}^{v}g\rangle _{v},\hspace{1cm}S^{\prime }\in 
\mathcal{S},\rho (S^{\prime })\geq t; \\
A_{3}^{6}(t)& :=\sum\limits_{S^{\prime }\in \mathcal{S}:\rho (S^{\prime
})\geq t}A_{3}^{6}(S^{\prime },t).
\end{align*}%
We can see that the $t$-fold parent of $S^{\prime }$ is defined by imposing
the restriction $\rho (S^{\prime })\geq t$. We want to show 
\begin{equation*}
|A_{3}^{6}(t)|\lesssim \delta ^{\sigma t}\mathcal{V}\Vert
f\Vert _{L^{2}(u)}\Vert g\Vert _{L^{2}(v)}\hspace{1cm}t\geq 1.
\end{equation*}%
Here the constant $\sigma =-2\sigma_{0} >0$, hence we get \eqref{e:a36} when we
will sum over $t\geq 1$.

Since the $\mathcal{P}_{S^{\prime }}^{v}$ are orthogonal, we get the
following using Cauchy-Schwarz 
\begin{equation*}
|A_{3}^{6}(t)|\leq \Vert g\Vert _{L^{2}(v)}\left[ \sum\limits_{S^{\prime
}\in \mathcal{S}:\rho (S^{\prime })\geq t}|\mathbf{E}_{\pi _{\mathcal{D}%
^{u}}^{1}(\pi _{\mathcal{S}}^{t}(S^{\prime }))}^{u}f|^{2}\Vert \mathcal{P}%
_{S^{\prime }}^{v}T(1_{\pi _{\mathcal{S}}^{t+1}(S^{\prime })\setminus \pi _{%
\mathcal{S}}^{t}(S^{\prime })}u)\Vert _{L^{2}(v)}^{2}\right] ^{\frac{1}{2}}.
\end{equation*}

Using the definition of $\alpha_{t}(S)$ given in Section \ref{s6}, the sum above is 
\begin{equation*}
\left[ \sum_{S^{\prime }\in \mathcal{S}}\alpha _{t}(S^{\prime })|\mathbf{E}%
_{\pi _{\mathcal{D}^{u}}^{1}(S^{\prime })}^{u}f|^{2}\right] ^{\frac{1}{2}}.
\end{equation*}%
Now using Theorem \ref{t:estimate} we have our desired estimate on $A_3^6(t)$ as the
Carleson measure norm of the coefficients $\{\alpha _{t}(S^{\prime
}):S^{\prime }\in \mathcal{S}\}$ is at most $C\delta ^{\sigma t}\mathcal{V}$.

\subsection{The paraproduct term $A_{2}^{6}$}

We have $\pi _{\mathcal{D}^{u}}^{1}(S^{\prime })\subset \pi _{\mathcal{S}%
}^{1}(S^{\prime })$, so we will now decompose term $A_{2}^{6}$ into two
terms by writing $\pi _{\mathcal{S}}^{1}(S^{\prime })=\pi _{\mathcal{D}%
^{u}}^{1}(S^{\prime })\cup \{\pi _{\mathcal{S}}^{1}(S^{\prime })\setminus
\pi _{\mathcal{D}^{u}}^{1}(S^{\prime })\}$ to give us, 
\begin{align}
|A_{1}^{7}|& :=\left\vert \sum\limits_{S^{\prime }\in \mathcal{S}}(\mathbf{E}%
_{\pi _{\mathcal{D}^{u}}^{1}(S^{\prime })}^{u}f)\langle T(1_{\pi _{\mathcal{D%
}^{u}}^{1}(S^{\prime })}u),\mathcal{P}_{S^{\prime }}^{v}g\rangle
_{v}\right\vert \lesssim (\mathcal{T}+\mathcal{V})\Vert
f\Vert _{L^{2}(u)}\Vert g\Vert _{L^{2}(v)};  \label{e:7.23} \\
|A_{2}^{7}|& :=\left\vert \sum\limits_{S^{\prime }\in \mathcal{S}}(\mathbf{E}%
_{\pi _{\mathcal{D}^{u}}^{1}(S^{\prime })}^{u}f)\langle T(1_{\pi _{\mathcal{S%
}}^{1}(S^{\prime })\setminus \pi _{\mathcal{D}^{u}}^{1}(S^{\prime })}u),%
\mathcal{P}_{S^{\prime }}^{v}g\rangle _{v}\right\vert \lesssim \mathcal{V}\Vert f\Vert _{L^{2}(u)}\Vert g\Vert _{L^{2}(v)}.
\end{align}%
Using these we get \eqref{e:a26}. Now we will prove these inequalities. Remembering
the definition of $\beta(S)$, we can now estimate%
\begin{equation*}
|\langle T(1_{\pi _{\mathcal{D}^{u}}^{1}(S^{\prime })}u),\mathcal{P}%
_{S^{\prime }}^{v}g\rangle _{v}|=|\langle \mathcal{P}_{S^{\prime
}}^{v}T(1_{\pi _{\mathcal{D}^{u}}^{1}(S^{\prime })}u),\mathcal{P}_{S^{\prime
}}^{v}g\rangle _{v}|\leq \beta (S^{\prime})^{\frac{1}{2}}\Vert \mathcal{P}%
_{S^{\prime }}^{v}g\Vert _{L^{2}(v)}.
\end{equation*}

As the projections are mutually orthogonal we have, summing over $S^{\prime
} $ 
\begin{equation*}
|A_{1}^{7}|\leq \left[ \sum\limits_{S^{\prime }\in \mathcal{S}}\beta
(S^{\prime })|(\mathbf{E}_{\pi _{\mathcal{D}^{u}}^{1}(S^{\prime
})}^{u}f)|^{2}\right] ^{\frac{1}{2}}\Vert g\Vert _{L^{2}(v)}\lesssim (%
\mathcal{T}+\mathcal{V})\Vert f\Vert _{L^{2}(u)}\Vert
g\Vert _{L^{2}(v)}.
\end{equation*}%
We have used the estimate on $\beta(S)$ in Theorem \ref{t:bt} to get the estimate in
the last step.

We can use a similar approach to prove \eqref{e:7.23}. Using the definition of $\gamma(S)$, we have 
\begin{equation*}
|\langle T(1_{\pi _{\mathcal{S}}^{1}(S^{\prime })\setminus \pi _{\mathcal{D}%
^{u}}^{1}(S^{\prime })}u),\mathcal{P}_{S^{\prime }}^{v}g\rangle
_{v}|=|\langle \mathcal{P}_{S^{\prime }}^{v}T(1_{\pi _{\mathcal{S}%
}^{1}(S^{\prime })\setminus \pi _{\mathcal{D}^{u}}^{1}(S^{\prime })}u),%
\mathcal{P}_{S^{\prime }}^{v}g\rangle _{v}|\leq \gamma (S^{\prime})^\frac{1}{2}\Vert \mathcal{P}_{S^{\prime }}^{v}g\Vert _{L^{2}(v)}.
\end{equation*}%
So we have the following estimate using Theorem \ref{t:6.5}, 
\begin{equation*}
|A_{2}^{7}|\leq \left[ \sum\limits_{S^{\prime }\in \mathcal{S}}\gamma
(S^{\prime })|(\mathbf{E}_{\pi _{\mathcal{D}^{u}}^{1}(S^{\prime
})}^{u}f)|^{2}\right] ^{\frac{1}{2}}\Vert g\Vert _{L^{2}(v)}\lesssim 
\mathcal{V}\Vert f\Vert _{L^{2}(u)}\Vert g\Vert
_{L^{2}(v)}.
\end{equation*}

\subsection{The term $A_{1}^{6}$}

Observe in the definition of $A_{1}^{6}(S^{\prime })$ we can write the
following for the difference of expectations, 
\begin{equation*}
\mathbf{E}_{\pi _{\mathcal{D}^{u}}^{2}(\pi _{\mathcal{S}}^{t-1}(S^{\prime
}))}^{u}(f)-\mathbf{E}_{\pi _{\mathcal{D}^{u}}^{1}(\pi _{\mathcal{S}%
}^{t-1}(S^{\prime }))}^{u}(f)=-\mathbf{E}_{\pi _{\mathcal{D}^{u}}^{1}(\pi _{%
\mathcal{S}}^{t-1}(S^{\prime }))}^{u}\Delta _{\pi _{\mathcal{D}^{u}}^{2}(\pi
_{\mathcal{S}}^{t-1}(S^{\prime }))}^{u}f,\quad \pi _{\mathcal{S}%
}^{t-1}(S^{\prime })\in \mathcal{S}.
\end{equation*}%
By re-indexing the sum $A_1^6(S')$ defined above we get 
\begin{equation*}
A_{1}^{6}(S^{\prime })=\sum\limits_{t=1}^{\rho (S^{\prime })}(\mathbf{E}%
_{\pi _{\mathcal{D}^{u}}^{1}(\pi _{\mathcal{S}}^{t-1}(S^{\prime
}))}^{u}\Delta _{\pi _{\mathcal{D}^{u}}^{2}(\pi _{\mathcal{S}%
}^{t-1}(S^{\prime }))}^{u}f)\langle T(1_{\pi _{\mathcal{S}}^{t}(S^{\prime
})}u),\mathcal{P}_{S^{\prime }}^{v}g\rangle _{v}=A_{3}^{7}(S^{\prime
})+A_{4}^{7}(S^{\prime });
\end{equation*}%
where 
\begin{align}
A_{3}^{7}(S^{\prime })& :=\sum_{t=1}^{\rho (S^{\prime })}(\mathbf{E}_{\pi _{%
\mathcal{D}^{u}}^{1}(\pi _{\mathcal{S}}^{t-1}(S^{\prime }))}^{u}\Delta _{\pi
_{\mathcal{D}^{u}}^{2}(\pi _{\mathcal{S}}^{t-1}(S^{\prime }))}^{u}f)\langle
T(1_{\pi _{\mathcal{S}}^{t}(S^{\prime })\setminus \pi _{\mathcal{S}%
}^{t-1}(S^{\prime })}u),\mathcal{P}_{S^{\prime }}^{v}g\rangle _{v},
\label{e:ortho} \\
A_{4}^{7}(S^{\prime })& :=\sum_{t=1}^{\rho (S^{\prime })}(\mathbf{E}_{\pi _{%
\mathcal{D}^{u}}^{1}(\pi _{\mathcal{S}}^{t-1}(S^{\prime }))}^{u}\Delta _{\pi
_{\mathcal{D}^{u}}^{2}(\pi _{\mathcal{S}}^{t-1}(S^{\prime }))}^{u}f)\langle
T(1_{\pi _{\mathcal{S}}^{t-1}(S^{\prime })}u),\mathcal{P}_{S^{\prime
}}^{v}g\rangle _{v}.
\end{align}%
We will now show that 
\begin{align}
\left\vert \sum\limits_{S^{\prime}\in \mathcal{S}\setminus
\{S_{0}\}}A_{3}^{7}(S^{\prime })\right\vert & \lesssim \mathcal{V}\Vert f\Vert _{L^{2}(u)}\Vert g\Vert _{L^{2}(v)};  \label{e:a37}
\\
\left\vert \sum_{S^{\prime}\in \mathcal{S}\setminus \{S_{0}\}}A_{4}^{7}(S^{\prime
})\right\vert & \lesssim \mathcal{V}\Vert f\Vert
_{L^{2}(u)}\Vert g\Vert _{L^{2}(v)}.\label{e:a47}
\end{align}%
We can prove \eqref{e:a37} using a similar approach as we did for %
\eqref{e:a36} as there is orthogonality present with Haar differences
applied to $f$ in \eqref{e:ortho}.

We will start the proof of \eqref{e:a47} by
re-indexing the sum. We have  
\begin{equation*}
A_{4}^{7}(S^{\prime },t):=(\mathbf{E}_{\pi _{\mathcal{D}^{u}}^{1}(S^{\prime
})}^{u}\Delta _{\pi _{\mathcal{D}^{u}}^{2}(S^{\prime })}^{u}f)\sum\limits 
_{\substack{ J\in \mathcal{S}  \\ \pi ^{t-1}(J)=S^{\prime }}}\langle
T(1_{S^{\prime }}u),\mathcal{P}_{S^{\prime }}^{v}g\rangle _{v}
\end{equation*}%
and we will prove that: 
\begin{equation}
\label{e:finalestimate}
\left\vert \sum\limits_{S^{\prime}\in \mathcal{S}\setminus
\{S_{0}\}}A_{3}^{7}(S^{\prime })\right\vert =
\left\vert \sum_{S^{\prime }\in \mathcal{S}}A_{4}^{7}(S^{\prime
},t)\right\vert \lesssim \delta ^{\frac{-\sigma_{0} t}{2}}\mathcal{V}\Vert f\Vert _{L^{2}(u)}\Vert g\Vert _{L^{2}(v)}\hspace{1cm}t\geq
1.
\end{equation}%
(Here the decay in $t$ is slightly worse in comparison to the previous
estimates.) We will exploit the implicit orthogonality in the sum above.
Note that we have 
\begin{align*}
\sum_{S^{\prime }\in \mathcal{S}}\sum\limits_{i=1}^{M_{\pi _{\mathcal{D}%
^{u}}^{1}(S^{\prime })}-1}|\langle f,h_{\pi _{\mathcal{S}}^{2}(S^{\prime
}))}^{i}\rangle _{v}|^{2}& \leq \Vert f\Vert _{L^{2}(u)}^{2}, \\
\sum\limits_{S^{\prime }\in \mathcal{S}}\left\Vert \sum\limits_{J\in 
\mathcal{S}:\pi ^{t-1}(J)=S^{\prime }}\mathcal{P}_{J}^{v}g\right\Vert
_{L^{2}(v)}^{2}& \leq \Vert g\Vert _{L^{2}(v)}^{2},
\end{align*}%
and combining these facts we get \eqref{e:finalestimate} from the estimate 
\begin{equation}
\label{e:projectionestimate}
\left\Vert \sum\limits_{J\in \mathcal{S}:\pi ^{t-1}(J)=S^{\prime }}\mathcal{P%
}_{J}^{v}T(1_{\pi _{\mathcal{S}}^{t-1}(S^{\prime })}u)\right\Vert
_{L^{2}(v)}^{2}\lesssim \delta ^{-\sigma_{0} t}(\mathcal{T}^{2}+\mathcal{V}^{2})u(\pi _{\mathcal{D}^{u}}^{1}(S^{\prime }))\hspace{%
0.2cm}S^{\prime }\in \mathcal{S},t\geq 1.
\end{equation}

Let us now prove \eqref{e:projectionestimate}. We use the geometric decay in %
\eqref{e:estimate2} and apply hypothesis \eqref{e:nostop}. Fix a $S^{\prime }\in 
\mathcal{S}$ and an integer $w:=\frac{t-1}{2}$. Let us denote by $%
S_{w}^{^{\prime }}$, all the cubes $J\in \mathcal{S}$ with $\pi _{\mathcal{S}%
}^{w}(J)=S^{\prime }$. We have 
\begin{equation*}
\left\Vert \sum\limits_{J\in \mathcal{S}:\pi ^{t-1}(J)=S^{\prime }}\mathcal{P%
}_{J}^{v}T(1_{\pi _{\mathcal{S}}^{t-1}(S^{\prime })}u)\right\Vert
_{L^{2}(v)}^{2}=\sum\limits_{J\in \mathcal{S}_{w}}B(J),
\end{equation*}%
where 
\begin{equation*}
B(J):=\left\Vert \sum\limits_{J^{\prime }\in \mathcal{S}:\pi
^{t-1-w}(J^{\prime })=S^{\prime }}\mathcal{P}_{J^{\prime }}^{v}T(1_{\pi _{%
\mathcal{S}}^{t-1}(S^{\prime })}u)\right\Vert _{L^{2}(v)}^{2}.
\end{equation*}%
We now decompose $B(J)$ as $B(J)=B_{1}(J)+B_{2}(J)$, where
\begin{align*}
B_{1}(J)& :=\left\Vert \sum\limits_{J^{\prime }\in \mathcal{S}:\pi
^{t-1-w}(J^{\prime })=S^{\prime }}\mathcal{P}_{J^{\prime }}^{v}T(1_{\pi _{%
\mathcal{S}}^{t-1}(S^{\prime })\setminus J}u)\right\Vert _{L^{2}(v)}^{2}, \\
B_{2}(J)& :=\left\Vert \sum\limits_{J^{\prime }\in \mathcal{S}:\pi
^{t-1-w}(J^{\prime })=S^{\prime }}\mathcal{P}_{J^{\prime
}}^{v}T(1_{J}u)\right\Vert _{L^{2}(v)}^{2}.
\end{align*}%
Using the testing condition we have 
\begin{equation*}
\sum\limits_{J\in \mathcal{S}_{w}}B_{2}(J)\leq \mathcal{T}%
^{2}\sum\limits_{J\in \mathcal{S}_{w}}u(J)\leq \delta ^{\frac{w}{2}}\mathcal{%
T}^{2}u(\pi _{\mathcal{D}^{u}}^{1}(S^{\prime })).
\end{equation*}%
Here we used the Carleson measure property of $u$ on stopping cubes \eqref{e:estimate2}
to deduce the last line. Now using the notation of \eqref{e:maximalset} and
applying \eqref{e:estimate1} we can see that 
\begin{equation*}
\sum\limits_{J\in \mathcal{S}_{w}}B_{1}(J)=\sum\limits_{J\in \mathcal{S}%
_{w}}\sum\limits_{J^{\prime }\in \mathcal{S}:\pi ^{t-1-w}(J^{\prime
})=J}\sum\limits_{I\in \mathcal{J}(J^{\prime })}\Vert \mathcal{P}%
_{I}^{v}T(1_{\pi _{\mathcal{S}}^{t-1}(S^{\prime })\setminus J}u)\Vert
_{L^{2}(v)}^{2}\leq \mathcal{V}^{2}\delta ^{\frac{-\sigma_{0} t%
}{2}}u(\pi _{\mathcal{D}^{u}}^{1}(S^{\prime })).
\end{equation*}%
This completes the proof of \eqref{e:projectionestimate}.

\section{Appendix: Hilbert Space Valued Operators}
\label{s:Appendix}

Here we make precise the definitions arising in the setting of weighted norm
inequalities for Hilbert space valued singular integrals, beginning with a
Calder\'{o}n--Zygmund kernel. We define a standard $B\left( \mathcal{H}_{1},%
\mathcal{H}_{2}\right) $-valued Calder\'on--Zygmund kernel $\mathfrak K(x,y)$ to
be a function $\mathfrak  K:X\times X\rightarrow B\left( \mathcal{H}_{1},\mathcal{H}%
_{2}\right) $ satisfying the following fractional size and smoothness
conditions of order $\delta $ for some $\delta >0$: For $x\neq y$,%
\begin{eqnarray}
&&\left\vert\mathfrak  K\left( x,y\right) \right\vert _{B\left( \mathcal{H}_{1},%
\mathcal{H}_{2}\right) }\leq \frac{C_{CZ}}{V\left( x,y\right)} ,
\label{sizeandsmoothness'} \\
&&\left\vert \nabla\mathfrak  K\left( x,y\right) -\nabla\mathfrak  K\left( x^{\prime
},y\right) \right\vert _{B\left( X\times X,B\left( \mathcal{H}_{1},\mathcal{H%
}_{2}\right) \right) }\leq C_{CZ}\left( \frac{d\left( x,x^{\prime }\right) }{%
d\left( x,y\right) }\right) ^{\delta }\frac{1}{V\left( x,y\right) }, 
\ \ \ \ \ \frac{d\left( x,x^{\prime }\right) }{d\left( x,y\right) }%
\leq \frac{1}{2A_0},  \notag
\end{eqnarray}%
and the last inequality also holds for the adjoint kernel in which $x$ and $%
y $ are interchanged.

We now turn to a precise definition of the weighted norm inequality%
\begin{equation}
\left\Vert T_{\sigma }f\right\Vert _{L_{\mathcal{H}_{2}}^{2}\left(
\omega \right) }\leq \mathcal{N}\left\Vert
f\right\Vert _{L_{\mathcal{H}_{1}}^{2}\left( \sigma \right) },\ \ \ \ \ f\in
L^{2}\left( \sigma \right) ,  \label{two weight'}
\end{equation}%
where $\sigma $ and $\omega $ are locally finite positive Borel measures on $%
X$, and $L_{\mathcal{H}_{1}}^{2}\left( \sigma \right) $ is the Hilbert space
consisting of those functions $f:X\rightarrow \mathcal{H}_{1}$ for which 
\begin{equation*}
\left\Vert f\right\Vert _{L_{\mathcal{H}_{1}}^{2}\left( \sigma \right)
}:= \sqrt{\int_{X}\left\vert f\left( x\right) \right\vert _{\mathcal{H}%
_{1}}^{2}d\sigma \left( x\right) }<\infty ,
\end{equation*}%
equipped with the usual inner product. A similar definition holds for $L_{%
\mathcal{H}_{2}}^{2}\left( \omega \right) $. For a precise definition of (%
\ref{two weight'}) we suppose that $K$ is a standard $B\left( 
\mathcal{H}_{1},\mathcal{H}_{2}\right) $-valued Calder\'on--Zygmund
kernel, and we introduce a family $\left\{ \eta _{\delta ,R}^{\alpha
}\right\} _{0<\delta <R<\infty }$ of nonnegative functions on $\left[
0,\infty \right) $ so that the truncated kernels $\mathfrak K_{\delta ,R}\left(
x,y\right) :=\eta _{\delta ,R}\left( d\left( x,y\right) \right)\mathfrak  K\left(
x,y\right) $ are bounded with compact support for fixed $x$ or $y$. Then the
truncated operators 
\begin{equation*}
T_{\sigma ,\delta ,R}f\left( x\right) := \int_{X}\mathfrak K_{\delta ,R}\left(
x,y\right) f\left( y\right) d\sigma \left( y\right) ,\ \ \ \ \ x\in X,
\end{equation*}%
are pointwise well-defined, and we will refer to the pair $\left(\mathfrak  K,\left\{
\eta _{\delta ,R}\right\} _{0<\delta <R<\infty }\right) $ as a singular
integral operator, which we typically denote by $T$, suppressing the
dependence on the truncations.

\begin{definition}
We say that a singular integral operator $T=\left(\mathfrak  K,\left\{ \eta _{\delta
,R}\right\} _{0<\delta <R<\infty }\right) $ satisfies the norm inequality \eqref{two weight'} provided%
\begin{equation*}
\left\Vert T_{\sigma ,\delta ,R}f\right\Vert _{L_{\mathcal{H}_{2}}^{2}\left(
\omega \right) }\leq \mathcal{N}\left\Vert f\right\Vert _{L_{%
\mathcal{H}_{1}}^{2}\left( \sigma \right) },\ \ \ \ \ f\in L^{2}\left(
\sigma \right) ,0<\delta <R<\infty .
\end{equation*}
\end{definition}

It turns out that, in the presence of the Muckenhoupt conditions, the norm
inequality (\ref{two weight'}) is essentially independent of the choice of
truncations used, which justifies suppressing the dependance on the truncations.

The following cube testing conditions, dual to each other and referred to as 
$T1$ conditions, are necessary for the boundedness of $T$ from $%
L^{2}\left( \sigma \right) $ to $L^{2}\left( \omega \right) $,%
\begin{eqnarray*}
\mathcal{T}^{2} &:= &\sup_{Q\in \mathcal{P}}\sup_{\mathbf{e}_{1}\in 
\func{unit}\mathcal{H}_{1}} \frac{1}{\sigma(Q)} 
\int_{Q}\left\vert T\left( \mathbf{1}_{Q}\mathbf{e}_{1}\sigma \right)
\right\vert _{\mathcal{H}_{2}}^{2}d\omega <\infty , \\
\left( \mathcal{T}^{\ast }\right) ^{2} &:= &\sup_{Q\in \mathcal{P}%
}\sup_{\mathbf{e}_{2}\in \func{unit}\mathcal{H}_{2}} \frac{1}{\omega(Q)}
\int_{Q}\left\vert T^{\ast }\left( \mathbf{1}_{Q}%
\mathbf{e}_{2}\omega \right) \right\vert _{\mathcal{H}_{1}}^{2}d\sigma
<\infty ,
\end{eqnarray*}%
and where we interpret the right sides as holding uniformly over all
truncations of $T$.

\subsection{Weighted Haar bases for $L_{\mathcal{H}}^{2}\left( \protect\mu %
\right) \label{Subsection Haar}$}

Now we turn to the definition of weighted Haar bases of $L_{\mathcal{H}%
}^{2}\left( \mu \right) $ where $\mathcal{H}$ is a separable Hilbert space
and $\mu $ is a locally finite positive Borel measure on $X$. We will use a
construction of a Haar basis for $L_{\mathcal{H}}^{2}\left( \mu \right) $ in 
$X$ that is adapted to a measure $\mu $ (c.f. \cite{NTV2} for the scalar
case). Given a dyadic cube $Q\in \mathcal{D}$, where $\mathcal{D}$ is a
dyadic grid of cubes from $\mathcal{P}^{n}$, let $\bigtriangleup _{\mathcal{H%
};Q}^{\mu }$ denote orthogonal projection onto the subspace $L_{\mathcal{H}%
;Q}^{2}\left( \mu \right) $ of $L_{\mathcal{H}}^{2}\left( \mu \right) $ that
consists of $\mathcal{H}$-linear combinations of the indicators of\ the
children $\mathcal{H}\left( Q\right) $ of $Q$ that have $\mu $-mean zero
over $Q$:%
\begin{equation*}
L_{\mathcal{H};Q}^{2}\left( \mu \right) := \left\{
f=\sum\limits_{Q^{\prime }\in \mathcal{H}\left( Q\right) }b_{Q^{\prime }}%
\mathbf{1}_{Q^{\prime }}:b_{Q^{\prime }}\in \mathcal{H},\int_{Q}fd\mu
=0\right\} ,
\end{equation*}%
where the expression $b_{Q^{\prime }}\mathbf{1}_{Q^{\prime }}$ refers to the 
$\mathcal{H}$-valued function $b_{Q^{\prime }}\mathbf{1}_{Q^{\prime }}\left(
x\right) =\left\{ 
\begin{array}{ccc}
b_{Q^{\prime }}\ \left( \in \mathcal{H}\right) & \text{ if } & x\in
Q^{\prime } \\ 
0\ \left( \in \mathcal{H}\right) & \text{ if } & x\not\in Q^{\prime }%
\end{array}%
\right. $ that is constant on $Q^{\prime }$ and vanishes off $Q^{\prime }$.

If $\left\{ b_{m}\right\} _{m=1}^{\infty }$ is any orthonormal basis for $%
\mathcal{H}$ then we define the finite-dimensional projections $%
\bigtriangleup _{\mathcal{H};Q}^{\mu ,b_{m}}$ onto%
\begin{equation*}
L_{\mathcal{H};Q}^{2}\left( \mu ;b_{m}\right) := \left\{
f=\sum\limits_{Q^{\prime }\in \mathcal{H}\left( Q\right) }a_{Q^{\prime
}}b_{m}\mathbf{1}_{Q^{\prime }}=b_{m}\sum\limits_{Q^{\prime }\in \mathfrak{C%
}\left( Q\right) }a_{Q^{\prime }}\mathbf{1}_{Q^{\prime }}:a_{Q^{\prime }}\in 
\mathbb{R},\int_{Q}fd\mu =0\right\} ,
\end{equation*}%
so that $\bigtriangleup _{\mathcal{H};Q}^{\mu }=\sum_{m=1}^{\infty
}\bigtriangleup _{\mathcal{H};Q}^{\mu ,b_{m}}$. Then we have the important
telescoping property for dyadic cubes $Q_{1}\subset Q_{2}$ that arises
from the martingale differences associated with the projections $%
\bigtriangleup _{Q}^{\mu ,b_{m}}$ for a fixed basis vector $b_{m}$:%
\begin{equation}
\mathbf{1}_{Q_{0}}\left( x\right) \left( \sum\limits_{Q\in \left[
Q_{1},Q_{2}\right] }\bigtriangleup _{\mathcal{H};Q}^{\mu ,b_{m}}f\left(
x\right) \right) =\mathbf{1}_{Q_{0}}\left( x\right) \left( \mathbb{E}%
_{Q_{0}}^{\mu ,b_{m}}f-\mathbb{E}_{Q_{2}}^{\mu ,b_{m}}f\right) ,\ \ \ \ \
Q_{0}\in \mathcal{H}\left( Q_{1}\right) ,\ f\in L^{2}\left( \mu \right) ,
\label{telescope}
\end{equation}%
where 
\begin{eqnarray*}
\mathbb{E}_{Q}^{\mu ,b_{m}}f\left( x\right) &:= &\left\{ 
\begin{array}{ccc}
E_{Q}^{\mu ,b_{m}}f & \text{ if } & x\in Q \\ 
0 & \text{ if } & x\not\in Q%
\end{array}%
\right. =\mathsf{P}_{b_{m}}^{\mathcal{H}}\mathbb{E}_{Q}^{\mu }f, \\
E_{Q}^{\mu ,b_{m}}f &:= &\int_{Q}\left\langle f\left( x\right)
,b_{m}\right\rangle _{\mathcal{H}}b_{m}d\mu \left( x\right) =\left\langle
\int_{Q}f\left( x\right) d\mu \left( x\right) ,b_{m}\right\rangle _{\mathcal{%
H}}b_{m}=\left\langle E_{Q}^{\mu }f,b_{m}\right\rangle _{\mathcal{H}}b_{m}=%
\mathsf{P}_{b_{m}}^{\mathcal{H}}E_{Q}^{\mu }f, \\
E_{Q}^{\mu }f &:= &\int_{Q}f\left( x\right) d\mu \left( x\right) \in 
\mathcal{H}\ .
\end{eqnarray*}%
Taking sums over projections we obtain the more general telescoping property
for any bounded projection $\mathsf{P}$ on $\mathcal{H}$: 
\begin{equation*}
\mathbf{1}_{Q_{0}}\left( x\right) \left( \sum\limits_{Q\in \left[
Q_{1},Q_{2}\right] }\mathsf{P}\bigtriangleup _{\mathcal{H};Q}^{\mu }f\left(
x\right) \right) =\mathbf{1}_{Q_{0}}\left( x\right) \left( \mathsf{P}\mathbb{%
E}_{Q_{0}}^{\mu }f-\mathsf{P}\mathbb{E}_{Q_{2}}^{\mu }f\right) ,\ \ \ \ \
Q_{0}\in \mathcal{H}\left( Q_{1}\right) ,\ f\in L^{2}\left( \mu \right) .
\end{equation*}%
It is sometimes convenient to use a fixed orthonormal basis $\left\{
h_{Q}^{\mu ,a,b_{m}}\right\} _{\substack{ a\in \Gamma _{n}  \\ 1\leq
m<\infty }}$ of $L_{Q}^{2}\left( \mu \right) $ where $\Gamma _{n}:=
\left\{ 0,1\right\} ^{n}\setminus \left\{ \mathbf{1}\right\} $ is a
convenient index set with $\mathbf{1}:=\left( 1,1,\ldots,1\right) $. Then $%
\left\{ h_{Q}^{\mu ,a,b_{m}}\right\} _{a\in \Gamma _{n}\text{ and }Q\in 
\mathcal{D}\text{ and }m\geq 1}$ is an orthonormal basis for $L^{2}\left(
\mu \right) $, with the understanding that we add the constant function $%
\mathbf{1}$ if $\mu $ is a finite measure. In particular we have%
\begin{eqnarray*}
\left\Vert f\right\Vert _{L_{\mathcal{H}}^{2}\left( \mu \right) }^{2}
&=&\sum_{Q\in \mathcal{D}}\left\Vert \bigtriangleup _{\mathcal{H};Q}^{\mu
}f\right\Vert _{L^{2}\left( \mu \right) }^{2}=\sum_{Q\in \mathcal{D}%
}\left\vert \widehat{f}\left(
Q\right) \right\vert ^{2},\\
\left\vert \widehat{f}\left( Q\right) \right\vert ^{2} &:= &\sum_{a\in
\Gamma _{n}\text{ }}\left\vert \left\langle f,h_{Q}^{\mu ,a}\right\rangle
_{\mu }\right\vert ^{2}=\sum_{a\in \Gamma _{n}\text{ }}\sum_{m=1}^{\infty
}\left\vert \left\langle f,h_{Q}^{\mu ,a,b_{m}}\right\rangle _{\mu
}\right\vert ^{2},
\end{eqnarray*}%
where the measure is suppressed in the notation $\widehat{f}$, along with
the parameters $a\in \Gamma _{n}$ and $m\geq 1$. Indeed, this follows from (%
\ref{telescope}) and Lebesgue's differentiation theorem for cubes. We also
have the following useful estimate. If $I^{\prime }$ is any of the $2^{n}$ $%
\mathcal{D}$-children of $I$, and $a\in \Gamma _{n}$, then 
\begin{equation}
\left\vert E_{I^{\prime }}^{\mu }h_{I}^{\mu ,a}\right\vert \leq \sqrt{%
E_{I^{\prime }}^{\mu }\left\vert h_{I}^{\mu ,a}\right\vert ^{2}}\leq \frac{1%
}{\sqrt{\mu\left( I^{\prime }\right)}}.  \label{useful Haar}
\end{equation}

Finally, let%
\begin{equation*}
\mathbb{E}_{Q}^{\mu }f\left( x\right) :=\left\{ 
\begin{array}{ccc}
E_{Q}^{\mu }f & \text{ if } & x\in Q \\ 
0 & \text{ if } & x\notin Q%
\end{array}%
\right.
\end{equation*}%
be projection onto the subspace $\mathcal{H}\mathbf{1}_{Q}$ of constant $%
\mathcal{H}$-valued functions on $Q$, and note that we have%
\begin{equation*}
f=\sum_{Q\in \mathcal{D}}\bigtriangleup _{\mathcal{H};Q}^{\mu }f,
\end{equation*}%
with convergence in $L_{\mathcal{H}}^{2}\left( \mu \right) $ since%
\begin{equation*}
\bigtriangleup _{\mathcal{H};Q}^{\mu }f=\left( \sum_{Q^{\prime }\in 
\mathcal{H}\left( Q\right) }\mathbb{E}_{Q^{\prime }}^{\mu }f\right) -%
\mathbb{E}_{Q}^{\mu }f=\sum_{Q^{\prime }\in \mathcal{H}\left( Q\right) }%
\mathbf{1}_{Q^{\prime }}\left( \mathbb{E}_{Q^{\prime }}^{\mu }f-\mathbb{E}%
_{Q}^{\mu }f\right) ,
\end{equation*}%
and the Hilbert space valued version of the dyadic Lebesgue differentiation
theorem gives%
\begin{equation*}
\lim_{Q\searrow x}\mathbb{E}_{Q}^{\mu }f=f\left( x\right) ,\ \ \ \ \ \text{%
for }\mu \text{-a.e. }x\in X.
\end{equation*}

\begin{description}
\item[Caution] While the scalar identity $\mathbf{1}_{Q^{\prime
}}\bigtriangleup _{\mathcal{H};Q}^{\mu }f=\mathbf{1}_{Q^{\prime
}}E_{Q^{\prime }}^{\mu }\bigtriangleup _{Q}^{\mu }f$ extends readily to the
Hilbert space setting, the operator identity%
\begin{equation*}
T_{\sigma }\left( \mathbf{1}_{Q^{\prime }}E_{Q^{\prime }}^{\mu
}\bigtriangleup _{\mathcal{H}_{1};Q}^{\mu }f\right) =\left( E_{Q^{\prime
}}^{\mu }\bigtriangleup _{\mathcal{H}_{1};Q}^{\mu }f\right) T_{\sigma
}\left( \mathbf{1}_{Q^{\prime }}\right) \ \ \ \ \ \text{in the scalar setting%
},
\end{equation*}%
\textbf{fails} in the Hilbert space setting where $T_{\sigma }:L_{\mathcal{H}%
_{1}}^{2}\rightarrow L_{\mathcal{H}_{2}}^{1,\limfunc{loc}}$ since $T_{\sigma
}\left( \mathbf{1}_{Q^{\prime }}E_{Q^{\prime }}^{\sigma }\bigtriangleup _{%
\mathcal{H}_{1};Q}^{\sigma }f\right) $ is a vector in $\mathcal{H}_{2}$,
while $E_{Q^{\prime }}^{\sigma }\bigtriangleup _{\mathcal{H}_{1};Q}^{\sigma
}f$ is a vector $\mathcal{H}_{1}$. Even if $\mathcal{H}=\mathcal{H}_{1}=%
\mathcal{H}_{2}$, an operator $T_{\sigma }$ does not typically commute with
an element in $\mathcal{H}$ unless $\mathcal{H}$ is the scalar field.
\end{description}
Nevertheless, we can indeed take the $\mathcal{H}$-norm of the element $%
E_{Q^{\prime }}^{\mu }\bigtriangleup _{\mathcal{H};Q}^{\mu }f$ outside the
operator $T_{\sigma }$, i.e. 
\begin{equation*}
T_{\sigma }\left( \mathbf{1}_{Q^{\prime }}E_{Q^{\prime }}^{\mu
}\bigtriangleup _{\mathcal{H};Q}^{\mu }f\right) =\left\vert E_{Q^{\prime
}}^{\mu }\bigtriangleup _{\mathcal{H};Q}^{\mu }f\right\vert _{\mathcal{H}%
}T_{\sigma }\left( \mathbf{1}_{Q^{\prime }}\mathbf{e}\right) ,\ \ \ \ \ 
\mathbf{e}:=\frac{E_{Q^{\prime }}^{\mu }\bigtriangleup _{\mathcal{H};Q}^{\mu
}f}{\left\vert E_{Q^{\prime }}^{\mu }\bigtriangleup _{\mathcal{H};Q}^{\mu
}f\right\vert _{\mathcal{H}}}.
\end{equation*}%
Note that this $\mathbf{e}$ is a unit vector in $\mathcal{H}
$, and this motivates our use of testing conditions of the form%
\begin{equation*}
\int_{Q}\left\vert T_{\sigma }\left( \mathbf{1}_{Q}\mathbf{e}\right)
\right\vert ^{2}d\omega \leq \mathcal{T}^2\sigma\left( Q\right),\ \ \ \ \ Q\in \mathcal{P}^{n}\text{ and }\mathbf{e}\in \func{unit%
}\mathcal{H}\ .
\end{equation*}

\begin{remark}
A stronger form of the testing condition, analogous to the
indicator/cube testing condition arising in connection with the two
weight norm inequality for the Hilbert transform \cite{LaSaShUr}, is this:%
\begin{equation*}
\int_{Q}\left\vert T_{\sigma }\left( \mathbf{1}_{Q}\mathbf{e}\right)
\right\vert ^{2}d\omega \leq \mathcal{T}^2\sigma\left( Q\right),\ \ \ \ \ Q\in \mathcal{P}^{n}\text{ and }\mathbf{e}:Q\rightarrow 
\func{unit}\mathcal{H}\text{ measurable}.
\end{equation*}
\end{remark}

\end{document}